\newcommand{\nc}{\newcommand}
\newcommand{\delete}[1]{}
\nc{\mlabel}[1]{\label{#1}}  
\nc{\mcite}[1]{\cite{#1}}  
\nc{\mref}[1]{\ref{#1}}  
\nc{\meqref}[1]{\ref{#1}} 
\nc{\mbibitem}[1]{\bibitem{#1}} 
\nc{\mlabel}[1]{\label{#1}  
{\hfill \hspace{1cm}{\bf{{\ }\hfill(#1)}}}}
\nc{\mcite}[1]{\cite{#1}{{\bf{{\ }(#1)}}}}  
\nc{\mref}[1]{\ref{#1}{{\bf{{\ }(#1)}}}}  
\nc{\meqref}[1]{\eqref{#1}{{\bf{{\ }(#1)}}}} 
\nc{\mbibitem}[1]{\bibitem[\bf #1]{#1}} 
\newtheorem{theorem}{Theorem}[section]
\newtheorem{prop}[theorem]{Proposition}
\newtheorem{defn}[theorem]{Definition}
\newtheorem{lemma}[theorem]{Lemma}
\newtheorem{coro}[theorem]{Corollary}
\newtheorem{prop-def}[theorem]{Proposition-Definition}
\newtheorem{remark}[theorem]{Remark}
\nc{\tred}[1]{\textcolor{red}{#1}}
\nc{\tblue}[1]{\textcolor{blue}{#1}}
\nc{\tgreen}[1]{\textcolor{green}{#1}}
\nc{\tpurple}[1]{\textcolor{purple}{#1}}
\nc{\btred}[1]{\textcolor{red}{\bf #1}}
\nc{\btblue}[1]{\textcolor{blue}{\bf #1}}
\nc{\btgreen}[1]{\textcolor{green}{\bf #1}}
\nc{\btpurple}[1]{\textcolor{purple}{\bf #1}}
\nc{\li}[1]{\textcolor{red}{#1}}
\nc{\cm}[1]{\textcolor{blue}{Chengming: #1}}
\nc{\xiang}[1]{\textcolor{green}{Xiang: #1}}
\nc{\mgraph}{\mathrm{graph}}
\nc{\qeq}{the $Q$-equations\xspace}
\nc{\adec}{\check{;}} \nc{\aop}{\alpha}
\nc{\dftimes}{\widetilde{\otimes}} \nc{\dfl}{\succ} \nc{\dfr}{\prec}
\nc{\dfc}{\circ} \nc{\dfb}{\bullet} \nc{\dft}{\star}
\nc{\dfcf}{{\mathbf k}} \nc{\apr}{\ast} \nc{\spr}{\cdot}
\nc{\twopr}{\circ} \nc{\sempr}{\ast}
\nc{\disp}[1]{\displaystyle{#1}}
\nc{\bin}[2]{ (_{\stackrel{\scs{#1}}{\scs{#2}}})}  
\nc{\binc}[2]{ \left (\!\! \begin{array}{c} \scs{#1}\\
    \scs{#2} \end{array}\!\! \right )}  
\nc{\bincc}[2]{  \left ( {\scs{#1} \atop
    \vspace{-.5cm}\scs{#2}} \right )}  
\nc{\sarray}[2]{\begin{array}{c}#1 \vspace{.1cm}\\ \hline
    \vspace{-.35cm} \\ #2 \end{array}}
\nc{\bs}{\bar{S}} \nc{\dcup}{\stackrel{\bullet}{\cup}}
\nc{\dbigcup}{\stackrel{\bullet}{\bigcup}} \nc{\etree}{\big |}
\nc{\la}{\longrightarrow} \nc{\fe}{\'{e}} \nc{\rar}{\rightarrow}
\nc{\dar}{\downarrow} \nc{\dap}[1]{\downarrow
\rlap{$\scriptstyle{#1}$}} \nc{\uap}[1]{\uparrow
\rlap{$\scriptstyle{#1}$}} \nc{\defeq}{\stackrel{\rm def}{=}}
\nc{\dis}[1]{\displaystyle{#1}} \nc{\dotcup}{\,
\displaystyle{\bigcup^\bullet}\ } \nc{\sdotcup}{\tiny{
\displaystyle{\bigcup^\bullet}\ }} \nc{\hcm}{\ \hat{,}\ }
\nc{\hcirc}{\hat{\circ}} \nc{\hts}{\hat{\shpr}}
\nc{\lts}{\stackrel{\leftarrow}{\shpr}}
\nc{\rts}{\stackrel{\rightarrow}{\shpr}} \nc{\lleft}{[}
\nc{\lright}{]} \nc{\uni}[1]{\tilde{#1}} \nc{\wor}[1]{\check{#1}}
\nc{\free}[1]{\bar{#1}} \nc{\den}[1]{\check{#1}} \nc{\lrpa}{\wr}
\nc{\curlyl}{\left \{ \begin{array}{c} {} \\ {} \end{array}
    \right .  \!\!\!\!\!\!\!}
\nc{\curlyr}{ \!\!\!\!\!\!\!
    \left . \begin{array}{c} {} \\ {} \end{array}
    \right \} }
\nc{\leaf}{\ell}       
\nc{\longmid}{\left | \begin{array}{c} {} \\ {} \end{array}
    \right . \!\!\!\!\!\!\!}
\nc{\ot}{\otimes} \nc{\sot}{{\scriptstyle{\ot}}}
\nc{\otm}{\overline{\ot}} \nc{\ora}[1]{\stackrel{#1}{\rar}}
\nc{\ola}[1]{\stackrel{#1}{\la}}
\nc{\pltree}{\calt^\pl} \nc{\epltree}{\calt^{\pl,\NC}}
\nc{\rbpltree}{\calt^r} \nc{\scs}[1]{\scriptstyle{#1}}
\nc{\mrm}[1]{{\rm #1}}
\nc{\dirlim}{\displaystyle{\lim_{\longrightarrow}}\,}
\nc{\invlim}{\displaystyle{\lim_{\longleftarrow}}\,}
\nc{\mvp}{\vspace{0.5cm}} \nc{\svp}{\vspace{2cm}}
\nc{\vp}{\vspace{8cm}} \nc{\proofbegin}{\noindent{\bf Proof: }}
\nc{\proofend}{$\blacksquare$ \vspace{0.5cm}}
\nc{\freerbpl}{{F^{\mathrm RBPL}}}
\nc{\sha}{{\mbox{\cyr X}}}  
\nc{\ncsha}{{\mbox{\cyr X}^{\mathrm NC}}} \nc{\ncshao}{{\mbox{\cyr
X}^{\mathrm NC,\,0}}}
\nc{\shpr}{\diamond}    
\nc{\shprm}{\overline{\diamond}}    
\nc{\shpro}{\diamond^0}    
\nc{\shprr}{\diamond^r}     
\nc{\shpra}{\overline{\diamond}^r} \nc{\shpru}{\check{\diamond}}
\nc{\catpr}{\diamond_l} \nc{\rcatpr}{\diamond_r}
\nc{\lapr}{\diamond_a} \nc{\sqcupm}{\ot} \nc{\lepr}{\diamond_e}
\nc{\vep}{\varepsilon} \nc{\labs}{\mid\!} \nc{\rabs}{\!\mid}
\nc{\hsha}{\widehat{\sha}} \nc{\lsha}{\stackrel{\leftarrow}{\sha}}
\nc{\rsha}{\stackrel{\rightarrow}{\sha}} \nc{\lc}{\lfloor}
\nc{\rc}{\rfloor} \nc{\tpr}{\sqcup} \nc{\nctpr}{\vee}
\nc{\plpr}{\star} \nc{\rbplpr}{\bar{\plpr}} \nc{\sqmon}[1]{\langle
#1\rangle} \nc{\forest}{\calf} \nc{\ass}[1]{\alpha({#1})}
\nc{\altx}{\Lambda_X} \nc{\vecT}{\vec{T}} \nc{\onetree}{\bullet}
\nc{\Ao}{\check{A}} \nc{\seta}{\underline{\Ao}}
\nc{\deltaa}{\overline{\delta}} \nc{\trho}{\tilde{\rho}}
\nc{\mmbox}[1]{\mbox{\ #1\ }} \nc{\ann}{\mrm{ann}}
\nc{\Aut}{\mrm{Aut}} \nc{\can}{\mrm{can}} \nc{\twoalg}{{two-sided
algebra}\xspace} \nc{\colim}{\mrm{colim}} \nc{\Cont}{\mrm{Cont}}
\nc{\rchar}{\mrm{char}} \nc{\cok}{\mrm{coker}} \nc{\dtf}{{R-{\rm
tf}}} \nc{\dtor}{{R-{\rm tor}}}
\nc{\depth}{{\mrm d}} \nc{\Div}{{\mrm Div}} \nc{\End}{\mrm{End}}
\nc{\Ext}{\mrm{Ext}} \nc{\Fil}{\mrm{Fil}} \nc{\Frob}{\mrm{Frob}}
\nc{\Gal}{\mrm{Gal}} \nc{\GL}{\mrm{GL}} \nc{\Hom}{\mrm{Hom}}
\nc{\hsr}{\mrm{H}} \nc{\hpol}{\mrm{HP}} \nc{\id}{\mrm{id}}
\nc{\im}{\mrm{im}} \nc{\incl}{\mrm{incl}} \nc{\length}{\mrm{length}}
\nc{\LR}{\mrm{LR}} \nc{\mchar}{\rm char} \nc{\NC}{\mrm{NC}}
\nc{\mpart}{\mrm{part}} \nc{\pl}{\mrm{PL}} \nc{\ql}{{\QQ_\ell}}
\nc{\qp}{{\QQ_p}} \nc{\rank}{\mrm{rank}} \nc{\rba}{\rm{RBA }}
\nc{\rbas}{\rm{RBAs }} \nc{\rbpl}{\mrm{RBPL}} \nc{\rbw}{\rm{RBW }}
\nc{\rbws}{\rm{RBWs }} \nc{\rcot}{\mrm{cot}}
\nc{\rest}{\rm{controlled}\xspace} \nc{\rdef}{\mrm{def}}
\nc{\rdiv}{{\rm div}} \nc{\rtf}{{\rm tf}} \nc{\rtor}{{\rm tor}}
\nc{\res}{\mrm{res}} \nc{\SL}{\mrm{SL}} \nc{\Spec}{\mrm{Spec}}
\nc{\tor}{\mrm{tor}} \nc{\Tr}{\mrm{Tr}} \nc{\mtr}{\mrm{sk}}
\nc{\ab}{\mathbf{Ab}} \nc{\Alg}{\mathbf{Alg}}
\nc{\Algo}{\mathbf{Alg}^0} \nc{\Bax}{\mathbf{Bax}}
\nc{\Baxo}{\mathbf{Bax}^0} \nc{\RB}{\mathbf{RB}}
\nc{\RBo}{\mathbf{RB}^0} \nc{\BRB}{\mathbf{RB}}
\nc{\Dend}{\mathbf{DD}} \nc{\bfk}{{\bf k}} \nc{\bfone}{{\bf 1}}
\nc{\base}[1]{{a_{#1}}} \nc{\detail}{\marginpar{\bf More detail}
    \noindent{\bf Need more detail!}
    \svp}
\nc{\Diff}{\mathbf{Diff}} \nc{\gap}{\marginpar{\bf
Incomplete}\noindent{\bf Incomplete!!}
    \svp}
\nc{\FMod}{\mathbf{FMod}} \nc{\mset}{\mathbf{MSet}}
\nc{\rb}{\mathrm{RB}} \nc{\Int}{\mathbf{Int}}
\nc{\Mon}{\mathbf{Mon}}
\nc{\remarks}{\noindent{\bf Remarks: }}
\nc{\OS}{\mathbf{OS}} 
\nc{\Rep}{\mathbf{Rep}} \nc{\Rings}{\mathbf{Rings}}
\nc{\Sets}{\mathbf{Sets}} \nc{\DT}{\mathbf{DT}}
\nc{\BA}{{\mathbb A}} \nc{\CC}{{\mathbb C}} \nc{\DD}{{\mathbb D}}
\nc{\EE}{{\mathbb E}} \nc{\FF}{{\mathbb F}} \nc{\GG}{{\mathbb G}}
\nc{\HH}{{\mathbb H}} \nc{\LL}{{\mathbb L}} \nc{\NN}{{\mathbb N}}
\nc{\QQ}{{\mathbb Q}} \nc{\RR}{{\mathbb R}} \nc{\TT}{{\mathbb T}}
\nc{\VV}{{\mathbb V}} \nc{\ZZ}{{\mathbb Z}}
\nc{\calao}{{\mathcal A}} \nc{\cala}{{\mathcal A}}
\nc{\calc}{{\mathcal C}} \nc{\cald}{{\mathcal D}}
\nc{\cale}{{\mathcal E}} \nc{\calf}{{\mathcal F}}
\nc{\calfr}{{{\mathcal F}^{\,r}}} \nc{\calfo}{{\mathcal F}^0}
\nc{\calfro}{{\mathcal F}^{\,r,0}} \nc{\oF}{\overline{F}}
\nc{\calg}{{\mathcal G}} \nc{\calh}{{\mathcal H}}
\nc{\cali}{{\mathcal I}} \nc{\calj}{{\mathcal J}}
\nc{\call}{{\mathcal L}} \nc{\calm}{{\mathcal M}}
\nc{\caln}{{\mathcal N}} \nc{\calo}{{\mathcal O}}
\nc{\calp}{{\mathcal P}} \nc{\calr}{{\mathcal R}}
\nc{\calt}{{\mathcal T}} \nc{\caltr}{{\mathcal T}^{\,r}}
\nc{\calu}{{\mathcal U}} \nc{\calv}{{\mathcal V}}
\nc{\calw}{{\mathcal W}} \nc{\calx}{{\mathcal X}}
\nc{\CA}{\mathcal{A}}
\nc{\fraka}{{\mathfrak a}} \nc{\frakB}{{\mathfrak B}}
\nc{\frakb}{{\mathfrak b}} \nc{\frakd}{{\mathfrak d}}
\nc{\oD}{\overline{D}} \nc{\frakF}{{\mathfrak F}}
\nc{\frakg}{{\mathfrak g}} \nc{\frakm}{{\mathfrak m}}
\nc{\frakM}{{\mathfrak M}} \nc{\frakMo}{{\mathfrak M}^0}
\nc{\frakp}{{\mathfrak p}} \nc{\frakS}{{\mathfrak S}}
\nc{\frakSo}{{\mathfrak S}^0} \nc{\fraks}{{\mathfrak s}}
\nc{\os}{\overline{\fraks}} \nc{\frakT}{{\mathfrak T}}
\nc{\oT}{\overline{T}}
\nc{\frakX}{{\mathfrak X}} \nc{\frakXo}{{\mathfrak X}^0}
\nc{\frakx}{{\mathbf x}}
\nc{\frakTx}{\frakT}      
\nc{\frakTa}{\frakT^a}        
\nc{\frakTxo}{\frakTx^0}   
\nc{\caltao}{\calt^{a,0}}   
\nc{\ox}{\overline{\frakx}} \nc{\fraky}{{\mathfrak y}}
\nc{\frakz}{{\mathfrak z}} \nc{\oX}{\overline{X}}
\font\cyr=wncyr10
\nc{\redtext}[1]{\textcolor{red}{#1}}
\begin{document}

\title{On quadri-bialgebras}

\author{Xiang Ni}

\address{Department of Mathematics, Caltech, Pasadena, CA91125,
USA}\email{nixiang85@gmail.com}

\author{Chengming Bai}

\address{Chern Institute of Mathematics \& LPMC, Nankai University,
Tianjin 300071, P.R. China} \email{baicm@nankai.edu.cn}

\begin{abstract}
We introduce the notion of a quadri-bialgebra, which gives a
bialgebra theory for the quadri-algebra introduced by Aguiar and
Loday. We show that a quadri-bialgebra is equivalent to a Manin
triple of dendriform algebras associated to a nondegenerate
2-cocycle, and to a Manin triple of quadri-algebras associated to
a nondegenerate invariant bilinear form. Quadri-bialgebras also
come from a variation of the classical Yang-Baxter equation,
called \qeq. Moreover, quadri-bialgebras fit into the framework of
construction of Rota-Baxter operators and Nijenhuis operators on
the double spaces of quadri-algebras.
\end{abstract}

\subjclass[2010]{16W30, 17A30, 18D50 }

\keywords{Quadri-algebra, bialgebra, dendriform algebra, classical
Yang-Baxter equation, Rota-Baxter operator, Nijenhuis operator}

\maketitle

\tableofcontents

\allowdisplaybreaks

\section{Introduction}

Motivated by the study of periodicity in algebraic K-theory, J.-L. Loday introduced the notion of
a dendriform algebra~\cite{Lo1}. It has attracted quite much interest because of its connections with various fields in
mathematics and physics (see \cite{EMP} and the references therein).

There is a remarkable fact that a Rota-Baxter operator (of weight
zero), which first arose in probability
 theory~\cite{Bax} and later became popular in combinatorics and other areas~\cite{Gub,R1,R2}, on an associative algebra naturally
gives a  dendriform algebra structure on the underlying vector space of the associative algebra~\cite{Ag1,Ag2,E1}.
Such unexpected relationship
 between dendriform algebras in the field of operads and algebraic topology and
 Rota-Baxter operators in the field of combinatorics and probability theory was extended to several other structures.
In order to determine the algebraic structures
 behind
a pair of commuting Rota-Baxter operators (on an associative
algebra), which appeared, for example, in the space of the linear
endomorphisms of an infinitesimal
 bialgebra, Aguiar and Loday introduced the notion of
a quadri-algebra~\cite{AL}, which is a vector space equipped with four binary operations
 satisfying nine axioms. Moreover, quadri-algebras have deep
relationships with combinatorics and the theory of Hopf algebras~\cite{AL,EG1}. A quadri-algebra is also regarded as the underlying
algebra structure of a dendriform algebra with a nondegenerate
2-cocycle~\cite{Bai3} and is one of the first examples of operad splitting~\cite{BBGN,EG1,Lo2}.

This paper establishes a bialgebra theory for quadri-algebras,
utilizing methods from the theory of Lie bialgebras~\cite{CP,D}.
Explicitly, in the finite-dimensional case,  we consider an
analogue of a Manin triple of Lie algebras which is equivalent to
a Lie bialgebra, namely, a Manin triple of dendriform algebras
associated to a nondegenerate 2-cocycle. We find that it is in
fact equivalent to certain bialgebra structure of the underlying
quadri-algebra, which leads to the notion of quadri-bialgebra.
Furthermore, it is interesting to find that such a structure is
also equivalent to another Manin triple on the level of
quadri-algebras, that is, a Manin triple of quadri-algebras with a
nondegenerate invariant bilinear form. Quadri-bialgebras have
certain similar properties of Lie bialgebras. For example, there
are the so-called coboundary quadri-bialgebras which lead to a
construction from an analogue of the classical Yang-Baxter
equation and there also exists a ``Drinfeld   double" construction
for a finite-dimensional quadri-bialgebra. We would like to point
out that the finite-dimensional restriction is imposed to
motivated the definition of a quadri-bialgebra in its equivalence
to Manin triples and matched pairs. The notion of a
quadri-bialgebra itself, once established (in
Definition~\mref{de:quadbial}), can be applied with no dimensional
restrictions.

Moreover, we find that quadri-bialgebras fit into the framework of
constructions of certain linear operators, such as Rota-Baxter
operators and Nijenhuis operators
  in combinatorics~\cite{Bax, E1,E2, R1,R2}, renormalization of
perturbative quantum field theory (pQFT)~\cite{CK, EG2, EGK1,EGK2}
and quantum physics~\cite{CGM}, on the double spaces.

The paper is organized as follows. In Section~\mref{sec:dend}, we recall some notions and
basic facts on dendriform algebras and quadri-algebras. In Section~\mref{sec:manin}, we introduce the notion of a Manin triple of dendriform algebras
associated to a nondegenerate 2-cocycle and then interpret it in
terms of matched pairs of dendriform algebras. In Section~\mref{sec:match}, we
consider the notion of a Manin triple of quadri-algebras associated
to a nondegenerate invariant bilinear form and then interpret it in
terms of matched pairs of quadri-algebras. We also show the
equivalence between the two Manin triples introduced in the two sections. In Section~\mref{sec:qbial}, we
define a quadri-bialgebra as a bialgebra
structure equivalent to the aforementioned  Manin triples by assuming
that there is a quadri-algebra structure on the dual space. In
Section~\mref{sec:cob}, we study the coboundary case which leads to a
construction from certain algebraic equations, which could be regarded as
analogues of the classical Yang-Baxter equation. In Section~\mref{sec:qeq}, we
recall some results in \cite{Bai3} which reduce the aforementioned
algebraic equations to a simple form, namely, \qeq referring
to a set of two equations. We list some properties of \qeq
including the ones given in \cite{Bai3} from another point of view.
 In Section~\mref{sec:double}, we construct families of Nijenhuis operators and
Rota-Baxter operators on certain double spaces of quadri-algebras,
including the Drinfeld $Q$-doubles obtained from
quadri-bialgebras.

\section{Notations and backgrounds}
\mlabel{sec:dend}

Throughout this paper, all algebras and vector spaces are finite-dimensional over a
fixed base field $\mathbb F$. We put together notations and conventions to be used throughout the paper.

(a) Let $V$ be a vector space.  Let $\mathfrak{B}:V\otimes
V\rightarrow\mathbb F$ be a symmetric or skew-symmetric bilinear
form on a vector space $V$. A subspace $W$ is called {\bf isotropic}
if $W\subset W^{\perp}$, where
\begin{equation}
W^{\perp}=\{x\in V|\mathfrak{B}(x,y)=0,\forall y\in
W\}.
\mlabel{eq:1.1}
\end{equation}
A subspace $W$ is called {\bf Lagrangian} if $W=W^{\perp}$.

 (b) Let $(A,\diamond)$ be a vector space with a
bilinear operation $\diamond: A\otimes A\rightarrow A$. Let
$L_\diamond(x)$ and $R_\diamond(x)$ denote the left and right
multiplication operators respectively, that is,
$L_\diamond(x)y=R_\diamond(y)x=x\diamond y$ for all $x,y\in A$. We
also simply denote them by $L(x)$ and $R(x)$ respectively if the
meaning of $\diamond$ is clear from the context. Moreover, let
$L_\diamond, R_\diamond:A\rightarrow \frak{gl}(A)$ be the linear
maps defined by $x\mapsto L_\diamond(x)$ and $x\mapsto
R_\diamond(x)$ respectively.

(c) Let $V$ be a vector space and let $r=\sum_i{a_i\otimes b_i}\in
V\otimes V$. Set
\begin{equation}
r_{12}:=\sum_ia_i\otimes b_i\otimes 1,\quad r_{13}:=\sum_{i}a_i\otimes
1\otimes b_i,\quad
r_{23}:=\sum_i1\otimes a_i\otimes b_i,
\mlabel{eq:1.2}
\end{equation}
where $1$ is a symbol playing a similar role of unit. If in
addition, there exists a bilinear operation $\diamond: V\otimes
V\rightarrow V$ on $V$, then the operation between two $r$s is in an
obvious way. For example,
\begin{equation}
r_{12}\diamond r_{13}=\sum_{i,j}a_i\diamond a_j\otimes b_i\otimes
b_j,\; r_{13}\diamond r_{23}=\sum_{i,j}a_i\otimes a_j\otimes
b_i\diamond b_j,\;r_{23}\diamond r_{12}=\sum_{i,j}a_j\otimes
a_i\diamond  b_j\otimes b_i.
\mlabel{eq:1.3}
\end{equation}
Note that Eq.~(\meqref{eq:1.3}) is independent of the existence of
the unit.

(d) Let $V$ be a vector space. Let $\tau:V\otimes V\to V\otimes V$
be the flip map defined as
    \begin{equation}
    \tau(u\otimes v)=v\otimes u, \quad \forall u,v \in V.
    \end{equation}

 (e) Let $V$ be a vector space.  Any $r\in V\otimes V$ is
identified with the linear map $T_r: V^*\rightarrow V$ defined by
\begin{equation}
\langle u^*\otimes v^*,\ r\rangle=\langle u^*,T_r(v^*) \rangle,\quad
\forall u^*, v^*\in V^*,
\mlabel{eq:1.4}
\end{equation}
where $\langle, \rangle$ is the canonical paring between $V$ and
$V^*$. $r\in V\otimes V$ is called {\bf nondegenerate} if the above
induced linear map $T_r$ is invertible.

(f) Let $V_1,V_2$ be two vector spaces and let $T:V_1\rightarrow
V_2$ be a linear map. Denote the dual (linear) map by
$T^*:V_2^*\rightarrow V_1^*$ defined by
\begin{equation}
\langle  v_1,T^*(v_2^*)\rangle   =\langle  T(v_1),v_2^*\rangle
,\;\;\forall v_1\in V_1, v_2^*\in V_2^*.
\mlabel{eq:1.5}
\end{equation}

(g) Let $A$ be a vector space with a set of bilinear operations
and let $V$ be a vector space. For any linear map $\rho:
A\rightarrow \frak{gl}(V)$, define a linear map $\rho^* : A
\rightarrow \frak{gl}(V^*)$ by
\begin{equation}
\langle \rho^*(x)v^*, u\rangle = \langle v^*, \rho(x)u\rangle,\
\forall x \in A, u\in V, v^*\in V^*.
\mlabel{eq:1.6}
\end{equation}
Note that in this case, $\rho^*$ is different from the one given
by Eq.~(\meqref{eq:1.5}) which regards $\frak{gl}(V)$ as a vector
space also.

(h) Let $V$ be a vector space, we sometimes use 1 to denote the
identity transformation on $V$.

Here are the notions that motivated our study.

\begin{defn}
{\rm \cite{Lo1} A {\bf dendriform algebra} $(A,\prec,\succ)$ is a
vector space $A$ together with two bilinear operations
$\prec,\succ: A\otimes A\rightarrow A$ such that (for all
$x,y,z\in A$)
\begin{equation}(x\prec y)\prec
z=x\prec(y\star z), (x\succ y)\prec z=x\succ(y\prec z),(x\star
y)\succ z=x\succ(y\succ z),
\mlabel{eq:2.1}
\end{equation}
where $x\star y=x\prec y+x\succ y$. Moreover, a  {\bf homomorphism}
between two dendriform algebras
 is defined to be a linear map (between the two underlying vector spaces) which preserves the operations respectively. }
\mlabel{de:2.1}
\end{defn}

\begin{prop}{\rm \cite{Lo1}} Let $(A,\prec,\succ)$ be a dendriform
algebra. Then the bilinear operation given by
\begin{equation}
x\star y:=x\prec y+x\succ y,\;\forall x,y\in A,
\mlabel{eq:2.2}
\end{equation}
defines an associative algebra, which is called the {\bf
associated associative algebra} and denoted by $(As(A),\star)$.
\mlabel{rk:2.2}
\end{prop}

\begin{defn}{\rm \cite{Ag3,Bai2}
Let $(A,\prec,\succ)$ be a dendriform algebra and let $V$ be a
vector space. Let
$l_{\prec},l_{\succ},r_{\prec},r_{\succ}:A\rightarrow
\frak{gl}(V)$ be four linear maps. Then $V$ or
$(V,l_{\prec},r_{\prec},l_{\succ},r_{\succ})$ is called a {\bf
bimodule} of $A$ if  the following equations hold (for all $x,y\in
A$):}
\begin{equation}r_{\prec}(y)r_{\prec}(x)=r_{\prec}(x\star y),
r_{\prec}(y)l_{\prec}(x)=l_{\prec}(x)r_{\star}(y), l_{\prec}(x\prec
y)=l_{\prec}(x)l_{\star}(y),
\mlabel{eq:2.3}
\end{equation}
\begin{equation}
r_{\prec}(y)r_{\succ}(x)=r_{\succ}(x\prec y),
r_{\prec}(y)l_{\succ}(x)=l_{\succ}(x)r_{\prec}(y), l_{\prec}(x\succ
y)=l_{\succ}(x)l_{\prec}(y),
\mlabel{eq:2.4}
\end{equation}
\begin{equation}r_{\succ}(y)r_{\star}(x)=r_{\succ}(x\succ
y),   r_{\succ}(y)l_{\star}(x)=l_{\succ}(x)r_{\succ}(y),
l_{\succ}(x\star y)=l_{\succ}(x)l_{\succ}(y).
\mlabel{eq:2.5}
\end{equation}
\mlabel{de:2.3}
\end{defn}

\begin{prop}{\rm \cite{Bai2}} Let
$(V,l_{\prec},r_{\prec},l_{\succ},r_{\succ})$ be a bimodule of a
dendriform algebra $(A,\prec$, $\succ)$. Then
$(V^*,-r_\succ^*,l_\succ^*+l_\prec^*,r_\succ^*+r_\prec^*,
-l_\prec^*)$ is a bimodule of $(A,\prec,\succ)$.
\mlabel{pp:2.4}
\end{prop}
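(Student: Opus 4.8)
The plan is a direct verification: one checks the nine bimodule axioms (2.3)--(2.5) for the quadruple $(-r_\succ^*,\ l_\succ^*+l_\prec^*,\ r_\succ^*+r_\prec^*,\ -l_\prec^*)$ acting on $V^*$, the only tools being the elementary identity $(fg)^*=g^*f^*$ for composable linear endomorphisms together with the nine axioms already known for $(l_\prec,r_\prec,l_\succ,r_\succ)$.

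First I would repackage the proposed operators to keep the bookkeeping under control. With $l_\star=l_\prec+l_\succ$ and $r_\star=r_\prec+r_\succ$ as in Definition 2.3, the new data is $l'_\prec=-r_\succ^*$, $r'_\succ=-l_\prec^*$, $r'_\prec=l_\star^*$, $l'_\succ=r_\star^*$, and hence also $l'_\star=r_\prec^*$ and $r'_\star=l_\succ^*$. Before dualizing I would record, for the original operators, the three ``associative'' identities
\[ l_\star(x\star y)=l_\star(x)l_\star(y),\qquad r_\star(x\star y)=r_\star(y)r_\star(x),\qquad l_\star(x)r_\star(y)=r_\star(y)l_\star(x), \]
each of which follows in one line by adding up, respectively, the third, first, and second of the three defining equations in each of (2.3), (2.4), (2.5); equivalently, they say that $(V,l_\star,r_\star)$ is a bimodule of the associative algebra $(As(A),\star)$ of Remark 2.2.

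With this dictionary in hand, every one of the nine axioms for the primed data dualizes to exactly one of the original axioms, or to one of these three associative identities, up to the harmless overall sign carried by $-r_\succ^*$ and $-l_\prec^*$. For instance: the first axiom of (2.3) for the new data, $r'_\prec(y)r'_\prec(x)=r'_\prec(x\star y)$, unwinds to $l_\star^*(y)l_\star^*(x)=l_\star^*(x\star y)$, which is the dual of $l_\star(x)l_\star(y)=l_\star(x\star y)$; the third axiom of (2.4), $l'_\prec(x\succ y)=l'_\succ(x)l'_\prec(y)$, unwinds to $-r_\succ^*(x\succ y)=-r_\star^*(x)r_\succ^*(y)$, the dual of the first axiom of (2.5); and the second axiom of (2.4), $r'_\prec(y)l'_\succ(x)=l'_\succ(x)r'_\prec(y)$, becomes the dual of the commuting identity $r_\star(x)l_\star(y)=l_\star(y)r_\star(x)$. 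The remaining six cases have the same shape, so I would simply display the full table of correspondences and write out one or two of them in detail.

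There is no genuine obstacle here; the calculation is routine. The one point demanding attention is the bookkeeping of signs and of composition order under $(\cdot)^*$, which is most delicate in the axioms of (2.4) that mix $\prec$ and $\succ$. A useful consistency check along the way is that the induced pair $(l'_\star,r'_\star)=(r_\prec^*,l_\succ^*)$ must again make $V^*$ a bimodule of $(As(A),\star)$; this is precisely the classical dual-bimodule statement for associative algebras applied to the $As(A)$-bimodule $(V,l_\succ,r_\prec)$ contained in axioms (2.3)--(2.5) (the equations $r_\prec(x\star y)=r_\prec(y)r_\prec(x)$, $l_\succ(x\star y)=l_\succ(x)l_\succ(y)$, $l_\succ(x)r_\prec(y)=r_\prec(y)l_\succ(x)$).
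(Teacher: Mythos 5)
Your verification is correct: the dictionary $l'_\prec=-r_\succ^*$, $r'_\prec=l_\star^*$, $l'_\succ=r_\star^*$, $r'_\succ=-l_\prec^*$ (hence $l'_\star=r_\prec^*$, $r'_\star=l_\succ^*$) together with the three summed $\star$-identities does make each of the nine dualized axioms land on one of the original axioms or on the induced $As(A)$-bimodule relations, with all signs cancelling. The paper itself gives no proof (it cites \cite{Bai2}), and your argument is exactly the standard routine dualization used there, so there is nothing to add.
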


\begin{defn}
{\rm \cite{Bai2,K} Let
$(V,l_{\prec},r_{\prec},l_{\succ},r_{\succ})$ be a bimodule of a
dendriform algebra $(A,\prec$, $\succ)$. A linear map
$T:V\rightarrow A$ is called an {\bf $\mathcal O$-operator} associated
to $(V,l_{\prec},r_{\prec},l_{\succ},r_{\succ})$ if $T$ satisfies
\small{
\begin{equation}
T(u)\prec T(v)=T(l_\prec (T(u))v+r_\prec(T(v))u), T(u)\succ
T(u)=T(l_\succ (T(u))v+r_\succ(T(v))u),\forall u,v\in
V.
\mlabel{eq:2.6}
\end{equation}}
}
\mlabel{de:2.5}
\end{defn}

\begin{defn}\cite{AL}
A {\bf quadri-algebra} $(A,\nwarrow,\nearrow,\swarrow,\searrow)$
is a vector space $A$ together with four bilinear operations
$\nwarrow,\nearrow,\swarrow$ and $\searrow:A\otimes A\rightarrow
A$ satisfying the axioms below (for all $x,y,z\in A$)
\begin{equation}(x\nwarrow y)\nwarrow z=x\nwarrow(y\star z),   (x\nearrow
y)\nwarrow z=x\nearrow(y\prec z),   (x\wedge y)\nearrow
z=x\nearrow(y\succ z),
\mlabel{eq:2.7}
\end{equation}
\begin{equation} (x\swarrow
y)\nwarrow z=x\swarrow(y\wedge z),   (x\searrow y)\nwarrow
z=x\searrow(y\nwarrow  z),   (x\vee y)\nearrow z=x\searrow
(y\nearrow z),
\mlabel{eq:2.8}
\end{equation}
\begin{equation}(x\prec y)\swarrow
z=x\swarrow(y\vee z),   (x\succ y)\swarrow z=x\searrow (y\swarrow
z),   (x\star y)\searrow z=x\searrow(y\searrow z),
\mlabel{eq:2.9}
\end{equation}
where
\begin{equation} x\succ y:=x\nearrow y+x\searrow y, x\prec
y:=x\nwarrow y+x\swarrow y,
\mlabel{eq:2.10}
\end{equation}
\begin{equation}
 x\vee y:=x\swarrow y+x\searrow y,   x\wedge y:=x\nwarrow
y+x\nearrow y,
\mlabel{eq:2.11}
\end{equation}
\begin{equation}x\star y:=x\searrow
y+x\nearrow y+x\nwarrow y+x\swarrow y=x\succ y+x\prec y=x\vee
y+x\wedge y.
\mlabel{eq:2.12}
\end{equation}
A {\bf homomorphism} between two quadri-algebras
 is defined as a linear map (between the two underlying vector spaces) which preserves the
 operations respectively.
\mlabel{de:2.6}
\end{defn}

\begin{prop}{\rm \cite{AL}} Let $(A,\nwarrow,\nearrow,\swarrow,\searrow)$
be a quadri-algebra.

\begin{enumerate}
\item The bilinear operation given by Eq.~(\ref{eq:2.10}) defines
a dendriform algebra $A_h:=(A,\prec,\succ)$, which is called the
{\bf associated horizontal dendriform algebra}. \mlabel{it:2.7a}
\item The bilinear operation given by Eq.~(\ref{eq:2.11}) defines
a dendriform algebra $A_v:=(A,\wedge,\vee)$, which is called the
{\bf associated vertical dendriform algebra}. \mlabel{it:2.7b}
\item The bilinear operation given by Eq.~(\ref{eq:2.12}) defines
an associative algebra $(A,\star)$, which is called the {\bf
associated associative algebra}, which is the associated
associative algebra of both the dendriform algebras
$(A,\prec,\succ)$ and $(A,\wedge,\vee)$. \mlabel{it:2.7c}
\end{enumerate}
\mlabel{pp:2.7}
\end{prop}

\begin{prop}{\rm \cite{Bai3}} Let $A$ be a vector space
with four bilinear operations denoted by $\nwarrow,\nearrow$,
$\swarrow$ and $\searrow:A\otimes A\rightarrow A$. Then the
following conditions are equivalent:

\begin{enumerate}
\item  $(A,\nwarrow,\nearrow,\swarrow,\searrow)$ is a
quadri-algebra; \mlabel{it:2.8a} \item $(A,\prec,\succ)$ defined
by Eq.~(\ref{eq:2.10}) is a dendriform algebra and $(A,
L_{\swarrow},R_{\nwarrow}, L_{\searrow}, R_{\nearrow})$ is a
bimodule. \mlabel{it:2.8b} \item $(A,\wedge,\vee)$ defined by Eq.~
(\ref{eq:2.11}) is a dendriform algebra and $(A,
L_{\nearrow},R_{\nwarrow}, L_{\searrow},R_{\swarrow})$ is a
bimodule. \mlabel{it:2.8c}
\end{enumerate}
\mlabel{pp:2.8}
\end{prop}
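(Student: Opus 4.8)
The plan is to establish $(1)\Leftrightarrow(2)$ and $(1)\Leftrightarrow(3)$, in each case by a mechanical term-by-term comparison of the nine bimodule identities (2.3)--(2.5) against the nine quadri-algebra axioms (2.7)--(2.9), once the abstract operators $l_\prec,r_\prec,l_\succ,r_\succ$ are specialized to the concrete ones. For $(1)\Leftrightarrow(2)$, set $V=A$ and use the dictionary $l_\prec=L_\swarrow$, $r_\prec=R_\nwarrow$, $l_\succ=L_\searrow$, $r_\succ=R_\nearrow$; then, by (2.10)--(2.12), the derived operators are $l_\star=l_\prec+l_\succ=L_\swarrow+L_\searrow=L_\vee$ and $r_\star=r_\prec+r_\succ=R_\nwarrow+R_\nearrow=R_\wedge$. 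Evaluating each identity of (2.3)--(2.5) on an arbitrary $z\in A$ and using only $L_\diamond(x)y=R_\diamond(y)x=x\diamond y$, every bimodule identity collapses to exactly one quadri-algebra axiom: for instance $r_\prec(y)r_\prec(x)=r_\prec(x\star y)$ becomes $(z\nwarrow x)\nwarrow y=z\nwarrow(x\star y)$, the first axiom of (2.7); $r_\prec(y)l_\prec(x)=l_\prec(x)r_\star(y)$ becomes $(x\swarrow z)\nwarrow y=x\swarrow(z\wedge y)$, the first axiom of (2.8); and $l_\prec(x\prec y)=l_\prec(x)l_\star(y)$ becomes $(x\prec y)\swarrow z=x\swarrow(y\vee z)$, the first axiom of (2.9). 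Running through all nine in the same way, one checks that (2.3) returns the first axioms of (2.7), (2.8) and (2.9), that (2.4) returns the second axioms, and that (2.5) returns the third; hence the bimodule clause in (2) is, as a system of identities on $A$, literally the list (2.7)--(2.9).

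Granting this, $(1)\Leftrightarrow(2)$ is immediate. If $A$ is a quadri-algebra, then $(A,\prec,\succ)$ is a dendriform dialgebra by Proposition~2.7(1), and (2.7)--(2.9) say precisely that $(A,L_\swarrow,R_\nwarrow,L_\searrow,R_\nearrow)$ is a bimodule of it, which is (2). Conversely, if (2) holds, the nine bimodule identities are exactly (2.7)--(2.9), so $A$ is a quadri-algebra; the dendriform-dialgebra hypothesis in (2) is in fact automatic here — one recovers, e.g., $(x\prec y)\prec z=x\prec(y\star z)$ by adding the first axioms of (2.7), (2.8) and (2.9) and using $y\wedge z+y\vee z=y\star z$ — but it is not even needed for this implication.

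For $(1)\Leftrightarrow(3)$ the argument is the mirror image: now regard $(A,\wedge,\vee)$ as the ambient dendriform dialgebra, with $\wedge,\vee$ playing the roles of $\prec,\succ$, and use the dictionary $l_\wedge=L_\nearrow$, $r_\wedge=R_\nwarrow$, $l_\vee=L_\searrow$, $r_\vee=R_\swarrow$, whence $l_\star=l_\wedge+l_\vee=L_\nearrow+L_\searrow=L_\succ$ and $r_\star=r_\wedge+r_\vee=R_\nwarrow+R_\swarrow=R_\prec$. Evaluating the corresponding bimodule identities for $(A,\wedge,\vee)$ on a general $z\in A$ again reproduces exactly the nine axioms (2.7)--(2.9) (only the bijection between bimodule identity and quadri-algebra axiom changes), and one invokes Proposition~2.7(2) in place of 2.7(1) for the dendriform-dialgebra clause. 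Combining $(1)\Leftrightarrow(2)$ with $(1)\Leftrightarrow(3)$ yields the equivalence of (1), (2) and (3).

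I expect no genuine difficulty here: the entire content is the bookkeeping — assigning each of $L_\swarrow,R_\nwarrow,L_\searrow,R_\nearrow$ (resp.\ $L_\nearrow,R_\nwarrow,L_\searrow,R_\swarrow$) to the correct slot among $l_\prec,r_\prec,l_\succ,r_\succ$, computing the derived operators $l_\star,r_\star$ from them, and tracking the relabelling of $x,y,z$ so that the matched identities coincide on the nose. The one spot to be careful with is precisely $l_\star$ and $r_\star$, since confusing $\wedge$ with $\vee$ (or $\prec$ with $\succ$) there would throw the whole comparison out of alignment.
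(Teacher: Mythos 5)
Your proposal is correct: with the dictionaries $l_\prec=L_\swarrow$, $r_\prec=R_\nwarrow$, $l_\succ=L_\searrow$, $r_\succ=R_\nearrow$ (so $l_\star=L_\vee$, $r_\star=R_\wedge$) and $l_\wedge=L_\nearrow$, $r_\wedge=R_\nwarrow$, $l_\vee=L_\searrow$, $r_\vee=R_\swarrow$ (so $l_\star=L_\succ$, $r_\star=R_\prec$), the nine bimodule identities do evaluate on $z$ to exactly the nine axioms (2.7)--(2.9), and your observation that the dendriform clause is automatic (and that the identity-to-axiom bijection regroups in the vertical case) is accurate. The paper states this proposition without proof, citing \cite{Bai3}, and your term-by-term verification is precisely the routine check underlying that citation, so there is no divergence of method to report.
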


\begin{coro}{\rm \cite{Bai3}} Let  $(A,\nwarrow,\nearrow,\swarrow,\searrow)$ be a
quadri-algebra. Then $(A^*,-R_{\nearrow}^*,L_{\vee}^*,R_{\wedge}^*$,
$-L_{\swarrow}^*)$ is a bimodule of the associated horizontal
dendriform algebra $(A,\prec,\succ)$ and $(A^*,-R_{\swarrow}^*$,
$L_{\succ}^*$, $ R_{\prec}^*$, $-L_{\nearrow}^*)$ is a bimodule of
the associated vertical dendriform algebra $(A,\wedge,\vee)$.
\mlabel{co:2.9}
\end{coro}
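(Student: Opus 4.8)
The corollary follows directly by applying the bimodule-dualization Proposition to the bimodules identified in the preceding Proposition, so the plan is essentially to chase the definitions carefully and keep track of signs.

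\medskip

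\noindent\textbf{Proof proposal.}
The plan is to apply the preceding Proposition (item (2) there) to obtain that $(A,L_{\swarrow},R_{\nwarrow},L_{\searrow},R_{\nearrow})$ is a bimodule of the horizontal dendriform dialgebra $(A,\prec,\succ)$, and then feed this bimodule into the Proposition on dualizing bimodules (the one sending $(V,l_\prec,r_\prec,l_\succ,r_\succ)$ to $(V^*,-r_\succ^*,l_\succ^*+l_\prec^*,r_\succ^*+r_\prec^*,-l_\prec^*)$). Taking $V=A$ with $l_\prec=L_{\swarrow}$, $r_\prec=R_{\nwarrow}$, $l_\succ=L_{\searrow}$, $r_\succ=R_{\nearrow}$, the dualization recipe yields the bimodule $(A^*,-R_{\nearrow}^*,L_{\searrow}^*+L_{\swarrow}^*,R_{\nearrow}^*+R_{\nwarrow}^*,-L_{\swarrow}^*)$. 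First I would observe that $L_{\searrow}+L_{\swarrow}=L_{\vee}$ and $R_{\nearrow}+R_{\nwarrow}=R_{\wedge}$ by Eq.~(2.11) (since $x\vee y = x\swarrow y+x\searrow y$ and $x\wedge y = x\nwarrow y+x\nearrow y$), so the dualized bimodule is exactly $(A^*,-R_{\nearrow}^*,L_{\vee}^*,R_{\wedge}^*,-L_{\swarrow}^*)$, which is the first assertion of the Corollary.

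\medskip

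For the second assertion I would argue symmetrically using item (3) of the preceding Proposition: there $(A,L_{\nearrow},R_{\nwarrow},L_{\searrow},R_{\swarrow})$ is a bimodule of the vertical dendriform dialgebra $(A,\wedge,\vee)$, where now $\wedge$ plays the role of $\prec$ and $\vee$ plays the role of $\succ$. Dualizing with $l_\prec=L_{\nearrow}$, $r_\prec=R_{\nwarrow}$, $l_\succ=L_{\searrow}$, $r_\succ=R_{\swarrow}$, the recipe gives $(A^*,-R_{\swarrow}^*,L_{\searrow}^*+L_{\nearrow}^*,R_{\swarrow}^*+R_{\nwarrow}^*,-L_{\nearrow}^*)$. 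Using Eq.~(2.10), $L_{\searrow}+L_{\nearrow}=L_{\succ}$ and $R_{\swarrow}+R_{\nwarrow}=R_{\prec}$ (since $x\succ y=x\nearrow y+x\searrow y$ and $x\prec y=x\nwarrow y+x\swarrow y$), so this equals $(A^*,-R_{\swarrow}^*,L_{\succ}^*,R_{\prec}^*,-L_{\nearrow}^*)$, the second assertion.

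\medskip

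The only real obstacle I anticipate is purely bookkeeping: one must be careful that the dualization Proposition is being applied with the correct pairing convention (Eq.~(1.7), the one for representations $\rho:A\to\frak{gl}(V)$, not Eq.~(1.5)) and that the roles of $\prec$ versus $\succ$ in the vertical case are matched to $\wedge$ versus $\vee$ in the right order — swapping them would permute the four output operators incorrectly. Once the substitution of the four operators is done correctly and the two linear-operator identities $L_{\searrow}+L_{\swarrow}=L_{\vee}$, $R_{\nearrow}+R_{\nwarrow}=R_{\wedge}$ (and their vertical analogues $L_{\searrow}+L_{\nearrow}=L_{\succ}$, $R_{\swarrow}+R_{\nwarrow}=R_{\prec}$) are recorded, no further computation is needed since the bimodule axioms are inherited verbatim from the cited Propositions.
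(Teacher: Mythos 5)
Your proposal is correct and is exactly the intended derivation: the paper states this Corollary as a quoted result from \cite{Bai3} without reproving it, and it follows just as you describe by feeding the bimodules $(A,L_{\swarrow},R_{\nwarrow},L_{\searrow},R_{\nearrow})$ and $(A,L_{\nearrow},R_{\nwarrow},L_{\searrow},R_{\swarrow})$ from the preceding equivalence Proposition into the dualization Proposition $(V,l_{\prec},r_{\prec},l_{\succ},r_{\succ})\mapsto(V^*,-r_{\succ}^*,l_{\succ}^*+l_{\prec}^*,r_{\succ}^*+r_{\prec}^*,-l_{\prec}^*)$, with $\wedge,\vee$ in the $\prec,\succ$ slots for the vertical case. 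Your bookkeeping identities $L_{\searrow}+L_{\swarrow}=L_{\vee}$, $R_{\nearrow}+R_{\nwarrow}=R_{\wedge}$, $L_{\searrow}+L_{\nearrow}=L_{\succ}$, $R_{\swarrow}+R_{\nwarrow}=R_{\prec}$ are all correct, so nothing is missing.
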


In the following, we will focus on the study of the
associated vertical dendriform algebras of quadri-algebras in this
paper. The corresponding study on the associated horizontal
dendriform algebras is completely similar.

\begin{defn}{\rm \cite{Bai2} Let $(A,\wedge,\vee)$ be a dendriform
algebra and let $(As(A), \star)$  be the associated  associative
algebra. Suppose that $\mathfrak{B}:A\otimes A\rightarrow\mathbb
F$ is a symmetric bilinear form. $\mathfrak{B}$ is called a {\bf
$2$-cocycle} on $A$ if $\mathfrak{B}$ satisfies
\begin{equation}
\mathfrak{B}(x\star y,z)=\mathfrak{B}(y,z\wedge
x)+\mathfrak{B}(x,y\vee z),    \forall x,y,z\in
A.
\mlabel{eq:2.13}
\end{equation}}
\mlabel{de:2.10}
\end{defn}

\begin{prop}{\rm \cite{Bai3}}
 Let $(A,\wedge,\vee)$ be a dendriform algebra
equipped with a nondegenerate $2$-cocycle $\mathfrak{B}$. Define
four bilinear operations $\nwarrow,
\nearrow,\swarrow,\searrow:A\otimes A\rightarrow A$ by
\begin{equation}\mathfrak{B}(x\nwarrow y,z)=\mathfrak{B}(x,y\star z),
\mathfrak{B}(x\nearrow y,z)=-\mathfrak{B}(y,z\vee x),
\mlabel{eq:2.14}
\end{equation}
\begin{equation}\mathfrak{B}(x\swarrow y,z)=-\mathfrak{B}(x,y\wedge z),
\mathfrak{B}(x\searrow y,z)=\mathfrak{B}(y,z\star x),\;\forall
x,y,z\in A.
\mlabel{eq:2.15}
\end{equation}
Then $(A,\nwarrow,\nearrow,\swarrow,\searrow)$ is a quadri-algebra such
that $(A,\wedge,\vee)$ is the associated vertical dendriform
algebra.
\mlabel{pp:2.11}
\end{prop}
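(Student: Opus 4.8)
The plan is to deduce this from the characterization in Proposition~2.11(3), which says that a quadruple of operations on $A$ forms a quadri-algebra precisely when $(A,\wedge,\vee)$ is a dendriform dialgebra and $(A,L_{\nearrow},R_{\nwarrow},L_{\searrow},R_{\swarrow})$ is a bimodule of it. Since $(A,\wedge,\vee)$ is given to be a dendriform dialgebra, and the four new operations are defined by Eqs.~(2.16)--(2.17) so that the last assertion (``$(A,\wedge,\vee)$ is the associated vertical dendriform dialgebra'') amounts to the identities $x\wedge y = x\nwarrow y + x\nearrow y$ and $x\vee y = x\swarrow y + x\searrow y$, the work splits into two parts: (i) verify the consistency identities $\nwarrow + \nearrow = \wedge$ and $\swarrow + \searrow = \vee$ from the defining formulas, and (ii) verify that $(A, L_{\nearrow}, R_{\nwarrow}, L_{\searrow}, R_{\swarrow})$ is a bimodule, i.e.\ that the nine bimodule axioms in Eqs.~(2.3)--(2.5) (with $l_\prec = L_{\nearrow}$, $r_\prec = R_{\nwarrow}$, $l_\succ = L_{\searrow}$, $r_\succ = R_{\swarrow}$, and the roles of $\prec,\succ$ played by $\wedge,\vee$) hold.

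For step (i), I would add the two equations of (2.16) and pair against an arbitrary $z$: $\mathfrak{B}(x\nwarrow y + x\nearrow y, z) = \mathfrak{B}(x, y\star z) - \mathfrak{B}(y, z\vee x)$, and then use the $2$-cocycle identity (2.14) in the form $\mathfrak{B}(x, y\star z) = \mathfrak{B}(x\star y, z) - \mathfrak{B}(y, z\wedge x)$ — wait, more carefully, (2.14) reads $\mathfrak{B}(x\star y, z) = \mathfrak{B}(y, z\wedge x) + \mathfrak{B}(x, y\vee z)$, which I would re-specialize with permuted arguments to show the right-hand side collapses to $\mathfrak{B}(x\wedge y, z)$; nondegeneracy of $\mathfrak{B}$ then gives $x\nwarrow y + x\nearrow y = x\wedge y$, and similarly for $\vee$. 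This is a short symmetry-of-$\mathfrak{B}$ plus one application of the cocycle identity.

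For step (ii), the strategy is to translate each bimodule axiom into an identity among the $\mathfrak{B}$-values and discharge it using only: the dendriform axioms (2.1) for $(A,\wedge,\vee)$, the cocycle identity (2.14), and symmetry of $\mathfrak{B}$. For instance, the axiom $r_\prec(y)r_\prec(x) = r_\prec(x\star y)$ becomes $R_{\nwarrow}(y)R_{\nwarrow}(x) = R_{\nwarrow}(x\star y)$, i.e.\ $(w\nwarrow x)\nwarrow y = w\nwarrow(x\star y)$ for all $w$; pairing against $z$ and using (2.16) twice turns this into $\mathfrak{B}(w, x\star(y\star z)) = \mathfrak{B}(w, (x\star y)\star z)$, which holds because $(As(A),\star)$ is associative (Remark~2.2). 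Each of the other eight axioms unwinds the same way, producing an identity in $\mathfrak{B}$-values that is either associativity of $\star$, one of the three dendriform relations (2.1), or a consequence of (2.14); where a left-multiplication operator such as $L_{\nearrow}$ or $L_{\searrow}$ appears, I would first move it across $\mathfrak{B}$ using (2.16)'s $\nearrow$-formula or (2.17)'s $\searrow$-formula, which reintroduces $\vee$ and $\star$ and then again reduces to (2.1)/(2.14).

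The main obstacle I anticipate is purely bookkeeping: the $\nearrow$ and $\searrow$ operations are defined with a ``transposed'' pairing ($\mathfrak{B}(x\nearrow y, z)$ depends on $\mathfrak{B}(y, z\vee x)$, cycling the arguments), so when a left-multiplication operator meets a right-multiplication operator inside one axiom, one must carefully track which slot of $\mathfrak{B}$ each of $x,y,z$ lands in, and apply (2.14) with the correct argument permutation. Once step~(i) and step~(ii) are done, Proposition~2.11 immediately yields that $(A,\nwarrow,\nearrow,\swarrow,\searrow)$ is a quadri-algebra, and step~(i) records that its associated vertical dendriform dialgebra is exactly $(A,\wedge,\vee)$, completing the proof.
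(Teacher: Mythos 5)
Your proposal is correct, and its route is the natural one; note that the paper itself does not prove this statement but imports it from \cite{Bai3}, so there is no in-paper argument to compare against. Two small points. First, the characterization you invoke is Proposition 2.8(3) (the statement you are proving \emph{is} Proposition 2.11), though the content you describe --- quadri-algebra $\Leftrightarrow$ $(A,\wedge,\vee)$ dendriform plus $(A,L_{\nearrow},R_{\nwarrow},L_{\searrow},R_{\swarrow})$ a bimodule --- is the right tool. Second, your bookkeeping worry is, if anything, milder than you fear: checking the nine bimodule identities with $l_{\wedge}=L_{\nearrow}$, $r_{\wedge}=R_{\nwarrow}$, $l_{\vee}=L_{\searrow}$, $r_{\vee}=R_{\swarrow}$, each identity, after pairing with $z$ and unwinding the defining relations (2.14)--(2.15), reduces to exactly one of: associativity of $\star$ (e.g.\ your sample axiom, and also $(x\searrow w)\nwarrow y=x\searrow(w\nwarrow y)$, $(x\star y)\searrow w=x\searrow(y\searrow w)$) or one of the three dendriform axioms for $(A,\wedge,\vee)$ (e.g.\ $(x\nearrow w)\nwarrow y=x\nearrow(w\prec y)$ reduces to $(y\star z)\vee x=y\vee(z\vee x)$); the $2$-cocycle identity and symmetry of $\mathfrak{B}$ are needed only in your step (i), where your computation $\mathfrak{B}(y\star z,x)=\mathfrak{B}(z,x\wedge y)+\mathfrak{B}(y,z\vee x)$ indeed yields $\nwarrow+\nearrow=\wedge$ and likewise $\swarrow+\searrow=\vee$. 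One should also record (as part of the setup) that nondegeneracy of $\mathfrak{B}$, together with finite-dimensionality assumed throughout the paper, is what makes the four operations well defined. With these observations your plan goes through completely.
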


\section{Manin triples of
dendriform algebras associated to a nondegenerate 2-cocycle and matched pairs of dendriform algebras}
\mlabel{sec:manin}

This section studies Manin triples of dendriform algebras and the
closely related matched pairs of dendriform algebras.

\begin{defn}  A {\bf Manin triple of
dendriform algebras associated to a nondegenerate 2-cocycle} is a
triple of dendriform algebras $(A,A^{+},A^{-})$ together with a
nondegenerate $2$-cocycle $\mathfrak{B}$ on $A$, such that:
\begin{enumerate}
\item $A^{+}$ and $A^{-}$ are subalgebras of $A$; \mlabel{it:3.1a}
\item $A=A^{+}\oplus A^{-}$ as vector spaces;
\mlabel{it:3.1b}\item $A^{+}$ and $A^{-}$ are isotropic with
respect to $\mathfrak{B}$. \mlabel{it:3.1c}
\end{enumerate}
It is denoted by $(A,A^{+},A^{-},\mathfrak{B})$. A {\bf
homomorphism} between two Manin triples $(A,A^{+},A^{-},\mathfrak{B}_A)$ and
$(B,B^{+},B^{-},\mathfrak{B}_B)$ of dendriform algebras
associated to a nondegenerate 2-cocycle
is a homomorphism of dendriform
algebras $\varphi:A\rightarrow B$ such that
\begin{equation}\varphi(A^{+})\subset B^{+},\;
\varphi(A^{-})\subset B^{-},\;
\mathfrak{B}_A(x,y)=\mathfrak{B}_B(\varphi(x),
\varphi(y)),\; \forall x,y\in A.
\mlabel{eq:3.1}
\end{equation}
\mlabel{de:3.1}
\end{defn}

\begin{defn}
Let $(A,\wedge,\vee)$ be a dendriform algebra. Suppose that there
is a dendriform algebra structure on the dual space $A^*$. If
there is a dendriform algebra structure on the direct sum of the
underlying vector spaces of $A$ and $A^*$ such that $A$ and $A^*$
are subalgebras and the natural symmetric bilinear form on
$A\oplus A^*$ given by
\begin{equation}\mathfrak{B}_S(x+a^*,y+b^*):=\langle  a^*,y\rangle +\langle
x,b^*\rangle ,\; \forall x,y\in A; a^*,b^*\in A^*,
\mlabel{eq:3.2}
\end{equation} is
a $2$-cocycle, then $(A\oplus A^*,A,A^*,\mathfrak{B}_S)$ is called a
{\bf standard Manin triple of dendriform algebras associated to
$\mathfrak{B}_S$.}
\mlabel{de:3.2}
\end{defn}

 Obviously, a standard Manin triple of dendriform
algebras is a Manin triple of dendriform algebras. Conversely,
we have

\begin{prop}
Every Manin triple of dendriform algebras associated to a
nondegenerate 2-cocycle is isomorphic to a standard one.
\mlabel{pp:3.3}
\end{prop}

\begin{proof} Since in this case $A^{-}$ and $(A^+)^*$ are identified by
the nondegenerate $2$-cocycle, the dendriform algebra structure on
$A^{-}$ is transferred to  $(A^+)^*$. Hence the dendriform algebra
structure on $A^{+}\oplus A^{-}$ is transferred to
$A^{+}\oplus(A^{+})^*$. Then the conclusion follows.
\end{proof}

\begin{prop} {\rm \cite{Bai2}} Let $(A,\wedge_A,\vee_A)$ and $(B,\wedge_B,\vee_B)$ be two
dendriform algebras. Suppose that there are linear maps
$l_{\wedge_A},r_{\wedge_A},l_{\vee_A},r_{\vee_A}:A\rightarrow \frak{gl}(B)$
and $l_{\wedge_B},r_{\wedge_B},l_{\vee_B},r_{\vee_B}:B\rightarrow
\frak{gl}(A)$ such that $(l_{\wedge_A},r_{\wedge_A},l_{\vee_A},r_{\vee_A})$
is a bimodule of $A$ and
$(l_{\wedge_B},r_{\wedge_B},l_{\vee_B},r_{\vee_B})$ is a bimodule of
$B$, and they satisfy the following conditions:
\begin{equation}(l_{\wedge_B}(a)x)\wedge_Ay+l_{\wedge_B}(r_{\wedge_A}(x)a)y=
l_{\wedge_B}(a)(x\star_Ay) ,
\mlabel{eq:3.3}
\end{equation}
\begin{equation}
l_{\wedge_B}(l_{\wedge_A}(x)a)y+(r_{\wedge_B}(a)x)\wedge_Ay=
x\wedge_A(l_{\star_B}(a)y)+r_{\wedge_B}(r_{\star_A}(y)a)x,
\mlabel{eq:3.4}
\end{equation}
\begin{equation}r_{\wedge_B}(a)(x\wedge_Ay)=r_{\wedge_B}(l_{\star_A}(y)a)x+
x\wedge_A(r_{\star_B}(a)y),
\mlabel{eq:3.5}
\end{equation}
\begin{equation}
(l_{\vee_B}(a)x)\wedge_Ay+l_{\wedge_B}(r_{\vee_A}(x)a)y=
l_{\vee_B}(a)(x\wedge_Ay),
\mlabel{eq:3.6}
\end{equation}\begin{equation}l_{\wedge_B}(l_{\vee_A}(x)a)y+(r_{\vee_B}(a)x)\wedge_Ay=
x\vee_A(l_{\wedge_B}(a)y)+r_{\vee_B}(r_{\wedge_A}(y)a)x,
\mlabel{eq:3.7}
\end{equation}\begin{equation}
r_{\wedge_B}(a)(x\vee_Ay)=r_{\vee_B}(l_{\wedge_A}(y)a)x+
x\vee_A(r_{\wedge_B}(a)y),
\mlabel{eq:3.8}
\end{equation}\begin{equation}(l_{\star_B}(a)x)\vee_Ay+l_{\vee_B}
(r_{\star_A}(x)a)y=l_{\vee_B}(a)(x\vee_Ay),
\mlabel{eq:3.9}
\end{equation}\begin{equation}
l_{\vee_B}(l_{\star_A}(x)a)y+(r_{\star_B}(a)x)\vee_Ay=x\vee_A(l_{\vee_B}(a)y)
+r_{\vee_B}(r_{\vee_A}(y)a)x,
\mlabel{eq:3.10}
\end{equation}
\begin{equation}r_{\vee_B}(a)(x\star_Ay)=r_{\vee_B}(l_{\vee_A}(y)a)x+x\vee_A(r_{\vee_B}(a)y),
\mlabel{eq:3.11}
\end{equation}\begin{equation}
(l_{\wedge_A}(x)a)\wedge_Bb+l_{\wedge_A}(r_{\wedge_B}(a)x)b=
l_{\wedge_A}(x)(a\star_Bb),
\mlabel{eq:3.12}
\end{equation}\begin{equation}l_{\wedge_A}(l_{\wedge_B}(a)x)b+(r_{\wedge_A}(x)a)\wedge_Bb
= a\wedge_B(l_{\star_A}(x)b)
+r_{\wedge_A}(r_{\star_B}(b)x)a,
\mlabel{eq:3.13}
\end{equation}\begin{equation}
r_{\wedge_A}(x)(a\wedge_Bb)=r_{\wedge_A}(l_{\star_B}(b)x)a+
a\wedge_B(r_{\star_A}(x)b),
\mlabel{eq:3.14}
\end{equation}\begin{equation}(l_{\vee_A}(x)a)\wedge_Bb+l_{\wedge_A}(r_{\vee_B}(a)x)b=
l_{\vee_A}(x)(a\wedge_Bb),
\mlabel{eq:3.15}
\end{equation}\begin{equation}
l_{\wedge_A}(l_{\vee_B}(a)x)b+(r_{\vee_A}(x)a)\wedge_Bb=
a\vee_B(l_{\wedge_A}(x)b)
+r_{\vee_A}(r_{\wedge_B}(b)x)a,
\mlabel{eq:3.16}
\end{equation}\begin{equation}r_{\wedge_A}(x)(a\vee_Bb)=r_{\vee_A}(l_{\wedge_B}(b)x)a+
a\vee_B(r_{\wedge_A}(x)b),
\mlabel{eq:3.17}
\end{equation}\begin{equation}
(l_{\star_A}(x)a)\vee_Bb+l_{\vee_A}(r_{\star_B}(a)x)b=
l_{\vee_A}(x)(a\vee_Bb),
\mlabel{eq:3.18}
\end{equation}\begin{equation}l_{\vee_A}(l_{\star_B}(a)x)b+(r_{\star_A}(x)a)\vee_Bb=a\vee_B(l_{\vee_A}(x)b)
+r_{\vee_A}(r_{\vee_B}(b)x)a,
\mlabel{eq:3.19}
\end{equation}\begin{equation}
r_{\vee_A}(x)(a\star_Bb)=r_{\vee_A}(l_{\vee_B}(b)x)a+
a\vee_B(r_{\vee_A}(x)b),
\mlabel{eq:3.20}
\end{equation} for all $x,y\in A, a,b\in B$.
Then there is a dendriform algebra structure on the vector space
$A\oplus B$ given by
\begin{equation}(x+a)\wedge(y+b):=x\wedge_Ay+l_{\wedge_B}(a)y+r_{\wedge_B}(b)x+a\wedge_Bb+
l_{\wedge_A}(x)b+r_{\wedge_A}(y)a,
\mlabel{eq:3.21}
\end{equation}\begin{equation}
(x+a)\vee(y+b):=x\vee_Ay+l_{\vee_B}(a)y+r_{\vee_B}(b)x+a\vee_Bb+
l_{\vee_A}(x)b+r_{\vee_A}(y)a,
\mlabel{eq:3.22}
\end{equation}
for all $x,y\in A,a,b\in B.$ This dendriform algebra, given by the
data $(A,B$, $l_{\wedge_A},r_{\wedge_A}$, $l_{\vee_A},r_{\vee_A},
l_{\wedge_B}$, $r_{\wedge_B}$, $l_{\vee_B}$, $r_{\vee_B})$ which
is called a {\bf matched pair of dendriform algebras}, is denoted
by $A\bowtie_{l_{\wedge_A},r_{\wedge_A},l_{\vee_A},
r_{\vee_A}}^{l_{\wedge_B},r_{\wedge_B},l_{\vee_B},r_{\vee_B}}B$ or
simply $A\bowtie B$. On the other hand, every dendriform algebra
which is the direct sum of the underlying vector spaces of two
dendriform subalgebras can be obtained from the above way.
\mlabel{pp:3.4}
\end{prop}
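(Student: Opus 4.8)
The plan is to establish the two assertions — the existence of the dendriform structure on $A\oplus B$, and the converse — by direct expansion followed by separation into $A$- and $B$-components.

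For the first assertion, take Eq.~(3.22)--(3.23) as the definition of two bilinear maps $\wedge,\vee$ on the vector space $A\oplus B$. Putting $a=b=0$ in (3.22)--(3.23) returns $x\wedge_Ay$ and $x\vee_Ay$, and symmetrically for $B$; thus $A$ and $B$ are automatically closed and carry their given dendriform structures, so the entire content is to check the three axioms~(2.1) — with $\wedge$ playing the role of $\prec$, $\vee$ that of $\succ$, and $\star=\wedge+\vee$ — on arbitrary $x+a$, $y+b$, $z+c$ with $x,y,z\in A$, $a,b,c\in B$. Substituting (3.22)--(3.23) and expanding, the difference of the two sides of a given axiom decomposes, by multilinearity, as a sum of contributions sorted by which of $a,b,c$ occur. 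The contribution in which none of $a,b,c$ occurs is the corresponding axiom of $(A,\wedge_A,\vee_A)$; the one in which all of $a,b,c$ occur is the corresponding axiom of $(B,\wedge_B,\vee_B)$; each contribution in which exactly one of $a,b,c$ occurs gives, in its $A$-component, one of Eqs.~(3.4)--(3.12) and, in its $B$-component, one of the bimodule identities~(2.3)--(2.5) for the $A$-bimodule $B$; and each contribution in which exactly two of $a,b,c$ occur gives, in its $B$-component, one of Eqs.~(3.13)--(3.21) and, in its $A$-component, one of the bimodule identities for the $B$-bimodule $A$. For instance, the first axiom evaluated on $x,y\in A$ and $c\in B$ reduces, in its $A$-component, to Eq.~(3.6), and in its $B$-component to $l_{\wedge_A}(x\wedge_Ay)=l_{\wedge_A}(x)l_{\star_A}(y)$. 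Since the three axioms, the three positions available to the single ``odd'' argument, and the two choices of whether that argument lies in $A$ or in $B$ together account exactly once for all $18$ of Eqs.~(3.4)--(3.21) and for all $18$ bimodule identities, the hypotheses are precisely the statement that every contribution vanishes; hence $A\oplus B$ is a dendriform dialgebra. The manifest symmetry of the hypotheses under simultaneously interchanging $A$ with $B$ and the two bimodule structures with one another means only about half of these computations need be carried out in detail.

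For the converse, let $(D,\wedge,\vee)$ be a dendriform dialgebra with $D=A\oplus B$ as vector spaces and $A,B$ sub-dialgebras, and let $p_A,p_B$ denote the two projections. For $x\in A$ and $a\in B$ set $l_{\wedge_A}(x)a:=p_B(x\wedge a)$, $r_{\wedge_B}(a)x:=p_A(x\wedge a)$, $l_{\wedge_B}(a)x:=p_A(a\wedge x)$, $r_{\wedge_A}(x)a:=p_B(a\wedge x)$, and likewise with $\vee$ in place of $\wedge$. These are linear maps $A\to\frak{gl}(B)$ and $B\to\frak{gl}(A)$, and bilinearity of $\wedge,\vee$ on $D$ together with the closedness of $A$ and $B$ shows that the formulas (3.22)--(3.23) recover the operations of $D$. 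Applying $p_A$ and $p_B$ to the three dendriform axioms of $D$ on elements drawn from $A$ and $B$ in all the combinations used above, the same bookkeeping run backwards shows that $(l_{\wedge_A},r_{\wedge_A},l_{\vee_A},r_{\vee_A})$ is a bimodule of $A$, that $(l_{\wedge_B},r_{\wedge_B},l_{\vee_B},r_{\vee_B})$ is a bimodule of $B$, and that Eqs.~(3.4)--(3.21) hold; hence $D=A\bowtie B$ for this matched pair. The only real difficulty is organizational: keeping straight which component of which of the three axioms, on which distribution of the three arguments between $A$ and $B$, yields which of the eighteen equations (3.4)--(3.21) and which of the bimodule identities, and verifying that these are exhausted without overlap. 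There is no conceptual obstacle — the computation is lengthy but mechanical, and the $A\leftrightarrow B$ symmetry of the setup halves the work.
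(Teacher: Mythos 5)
Your proposal is correct and is exactly the expected argument: the paper itself gives no proof of this proposition (it is quoted from \cite{Bai2}), and the standard proof there is precisely your direct expansion of the three dendriform axioms on $A\oplus B$, sorted by how the arguments distribute between $A$ and $B$ and by $A$- versus $B$-component, with the $36$ mixed component identities matching the $18$ compatibility conditions plus the two sets of bimodule axioms exactly once, and the converse obtained by defining the eight maps via the projections. The only blemish is the off-by-one shift in your equation references (the conditions are the paper's Eqs.~(3.3)--(3.20) and the product formulas are (3.21)--(3.22)), which does not affect the mathematics.
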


\begin{prop} Let $(A,\nwarrow_A,\nearrow_A, \swarrow_A,\searrow_A)$ be a
quadri-algebra. Suppose that there is a quadri-algebra structure
$\nwarrow_{A^*},\nearrow_{A^*}, \swarrow_{A^*},\searrow_{A^*}$ on
the dual space $A^*$. Let $A_v$ and $(A^*)_v$ be the associated
vertical dendriform algebras respectively. Then there exists a
dendriform algebra structure on the vector space $A\oplus A^*$
such that $A_v$ and $(A^*)_v$ are isotropic subalgebras associated
to the $2$-cocycle $\frak B_S$ given by Eq.~(\ref{eq:3.2}), that
is, $(A_v\oplus (A^*)_v, A_v,(A^*)_v,\mathfrak{B}_S)$  is a
standard Manin triple of dendriform algebras associated to the
2-cocycle $\frak B_S$, if and only if
$(A_v,(A^*)_v,-R_{\swarrow_A}^*,L_{\succ_A}^*,R_{\prec_A}^*,
-L_{\nearrow_A}^*,-R_{\swarrow_{A^*}}^*$, $L_{\succ_{A^*}}^*$,
$R_{\prec_{A^*}}^*$, $-L_{\nearrow_{A^*}}^*)$ is a matched pair of
dendriform algebras. \mlabel{pp:3.5}
\end{prop}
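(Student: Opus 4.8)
The plan is to deduce the proposition from Proposition 3.5 (the matched pair theorem for dendriform dialgebras) together with the Corollary that produces the canonical bimodules. First I would record the two standing facts that make the statement even well posed: by the Corollary applied to $A$, the quadruple $(-R_{\swarrow_A}^*,L_{\succ_A}^*,R_{\prec_A}^*,-L_{\nearrow_A}^*)$ is a bimodule of $A_v$, and by the Corollary applied to the quadri-algebra $A^*$ (using the identification $A^{**}=A$), the quadruple $(-R_{\swarrow_B}^*,L_{\succ_B}^*,R_{\prec_B}^*,-L_{\nearrow_B}^*)$ is a bimodule of $(A^*)_v$. Hence the ``bimodule'' half of Definition of a matched pair is automatic for the data in the statement, and being a matched pair is equivalent to the compatibility conditions (the analogues of Eqs.\ (3.4)--(3.23)) holding for these prescribed maps; by Proposition 3.5 this in turn is equivalent to the formulas (3.24)--(3.25), specialized to $l_{\wedge_A}=-R_{\swarrow_A}^*$, $r_{\wedge_A}=L_{\succ_A}^*$, $l_{\vee_A}=R_{\prec_A}^*$, $r_{\vee_A}=-L_{\nearrow_A}^*$ (and likewise with $B$), defining a dendriform dialgebra structure on $A\oplus A^*$ in which $A_v$ and $(A^*)_v$ are sub-dialgebras.

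For the ``if'' direction I would assume the prescribed data is a matched pair, obtain from the above the dendriform dialgebra $A_v\bowtie(A^*)_v$ with $A_v,(A^*)_v$ as sub-dialgebras, and then verify that $\mathfrak{B}_S$ is a $2$-cocycle. Since $\mathfrak{B}_S$ vanishes on $A\times A$ and on $A^*\times A^*$, both subspaces are isotropic, so only the identity $\mathfrak{B}_S(X\star Y,Z)=\mathfrak{B}_S(Y,Z\wedge X)+\mathfrak{B}_S(X,Y\vee Z)$ has to be checked; by multilinearity it suffices to treat the $8$ cases where each of $X,Y,Z$ lies in $A$ or in $A^*$. The two ``pure'' cases are trivial (every term vanishes), and each of the six mixed cases is handled by expanding the cross products via (3.24)--(3.25), inserting the explicit operators $-R_{\swarrow_A}^*,L_{\succ_A}^*,\dots$, and collapsing the result using $x\succ y=x\nearrow y+x\searrow y$, $x\prec y=x\nwarrow y+x\swarrow y$, $x\vee y=x\swarrow y+x\searrow y$, $x\wedge y=x\nwarrow y+x\nearrow y$, $x\star y=x\succ y+x\prec y=x\wedge y+x\vee y$, together with the definition $\langle\rho^*(u)v^*,w\rangle=\langle v^*,\rho(u)w\rangle$ of the dual representation. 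For example, for $X=x,Y=y\in A$, $Z=c^*\in A^*$ one gets $\langle x\star_A y,c^*\rangle=\langle y,L_{\succ_A}^*(x)c^*\rangle+\langle x,R_{\prec_A}^*(y)c^*\rangle=\langle x\succ_A y,c^*\rangle+\langle x\prec_A y,c^*\rangle$, which is an identity; the remaining five cases (two more with the ``odd'' argument in $A^*$, three with it in $A$) are entirely parallel and use only $A^*$-side or $A$-side quadri-relations respectively.

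For the ``only if'' direction, suppose some dendriform dialgebra structure on $A\oplus A^*$ makes $A_v,(A^*)_v$ isotropic sub-dialgebras with $\mathfrak{B}_S$ a $2$-cocycle. Then $A\oplus A^*$ is a direct sum of two sub-dialgebras, so by Proposition 3.5 it is $A_v\bowtie(A^*)_v$ for bimodule maps $l_{\wedge_A},r_{\wedge_A},l_{\vee_A},r_{\vee_A}\colon A\to\mathfrak{gl}(A^*)$ and $l_{\wedge_B},r_{\wedge_B},l_{\vee_B},r_{\vee_B}\colon A^*\to\mathfrak{gl}(A)$ forming a matched pair; concretely these maps are read off as the $A$- and $A^*$-components of the cross products, e.g.\ $l_{\wedge_A}(x)b^*=\pi_{A^*}(x\wedge b^*)$. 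Writing the $2$-cocycle identity in the same six mixed cases, and using that these maps form bimodules of $A_v$ and $(A^*)_v$, one identifies them with the prescribed operators: pairing $A^*$-components against $A$ forces $\langle l_{\wedge_A}(x)b^*,z\rangle=-\langle b^*,z\swarrow_A x\rangle$ for all $z$, i.e.\ $l_{\wedge_A}=-R_{\swarrow_A}^*$, and similarly $r_{\wedge_A}=L_{\succ_A}^*$, $l_{\vee_A}=R_{\prec_A}^*$, $r_{\vee_A}=-L_{\nearrow_A}^*$ and the four $B$-side equalities. Hence the matched pair supplied by Proposition 3.5 is exactly the one in the statement, which is therefore a matched pair.

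I expect the main obstacle to be the bookkeeping in the ``only if'' part. The $2$-cocycle identity by itself, case by case, primarily expresses the ``right'' operators ($r_{\wedge_A},r_{\vee_A}$, etc.) in terms of the ``left'' ones and the quadri-operations of $A$ (respectively $A^*$); pinning down the ``left'' operators as the prescribed dual operators requires feeding these relations back into the bimodule axioms (the analogues of Eqs.\ (2.3)--(2.5)) and the matched-pair compatibilities, and checking that the combined system is rigid. The ``if'' direction, by contrast, is a lengthy but wholly routine verification organized by the six mixed cases above.
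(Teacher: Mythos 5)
Your ``if'' half is fine and is essentially the paper's: the paper dismisses it as ``straightforward to check'', and your organization into the eight cases of the $2$-cocycle identity, with the sample computation for $x,y\in A$, $c^*\in A^*$, is exactly the check that is meant (only the numbering differs: the matched-pair result is Proposition 3.4 with product formulas (3.21)--(3.22), and the statement under review is Proposition 3.5).

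The ``only if'' half has a genuine gap, which you partly flag yourself and then defer: everything rests on identifying the eight cross-product maps $l_{\wedge_A},r_{\wedge_A},l_{\vee_A},r_{\vee_A},l_{\wedge_B},\dots$ produced by Proposition 3.4 with the prescribed operators $-R^*_{\swarrow_A},L^*_{\succ_A},R^*_{\prec_A},-L^*_{\nearrow_A}$ (and their $B$-analogues), and your plan --- extract relations from the six mixed instances of the $2$-cocycle identity and then argue that the system becomes ``rigid'' after feeding in the bimodule axioms --- does not go through. The three mixed cases with one argument in $A^*$ give only three linear relations among the four unknown trilinear forms attached to $l_{\wedge_A},r_{\wedge_A},l_{\vee_A},r_{\vee_A}$ (namely the pairings of $\langle x\star_A y,c^*\rangle$, $\langle c^*,y\wedge_A x\rangle$, $\langle c^*,x\vee_A y\rangle$ with combinations of these maps), so the solution space is a family parametrized by two arbitrary trilinear forms; the identity $\langle l_{\wedge_A}(x)b^*,z\rangle=-\langle b^*,z\swarrow_A x\rangle$ is therefore not ``forced'', and the residual freedom is not killed by the dendriform/bimodule axioms either. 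Indeed there are Manin triples in the sense of Definitions 3.1--3.2 whose cross products are not the prescribed ones: take $\dim A=2$ with basis $e_1,e_2$, all four quadri-operations on $A$ and on $A^*$ zero, and on $A\oplus A^*$ let all products vanish except $x\wedge a^*:=\langle e^2,x\rangle\langle a^*,e_1\rangle e^2=:a^*\vee x$; one checks (all double compositions vanish because $\langle e^2,e_1\rangle=0$) that this is a dendriform dialgebra with $A,A^*$ isotropic sub-dialgebras and $\mathfrak{B}_S$ a $2$-cocycle, yet its matched-pair maps are nonzero while the prescribed operators are all zero. So ``rigidity'' of your combined system is simply false, and no amount of bookkeeping along your route will yield the identification. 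The paper closes this step by a different mechanism: it never works with the abstract matched-pair maps, but computes both components of each cross product directly against $A$ and against $A^*$ by moving the given quadri-operations across $\mathfrak{B}_S$ via the invariance identities (2.14)--(2.15) (Propositions 2.11 and 4.8), e.g.\ $\langle x\wedge a^*,y\rangle=\mathfrak{B}_S(y,x\wedge a^*)=-\mathfrak{B}_S(y\swarrow_A x,a^*)$ and $\langle x\star a^*,b^*\rangle=\mathfrak{B}_S(a^*\searrow_B b^*,x)$, which gives $x\wedge a^*=-R^*_{\swarrow_A}(x)a^*+L^*_{\succ_B}(a^*)x$ at once. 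That transfer of the quadri-structure across the nondegenerate $2$-cocycle --- in effect, that the quadri-algebra induced on the double by Proposition 2.11 restricts on $A$ and $A^*$ to the given ones --- is the substantive ingredient missing from your argument, and any repair of your converse has to supply it rather than appeal to rigidity of the cocycle-plus-bimodule system.
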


\begin{proof}
If $(A_v,(A^*)_v,-R_{\swarrow_A}^*,L_{\succ_A}^*,R_{\prec_A}^*,
-L_{\nearrow_A}^*,-R_{\swarrow_{A^*}}^*,L_{\succ_{A^*}}^*,R_{\prec_{A^*}}^*,
-L_{\nearrow_{A^*}}^*)$ is a matched pair of dendriform algebras,
then it is straightforward to check that the  bilinear form $\frak
B_S$ given by Eq.~(\meqref{eq:3.2}) on
$A_v\bowtie_{-R_{\swarrow_A}^*,L_{\succ_A}^*,R_{\prec_A}^*,
-L_{\nearrow_A}^*}^{-R_{\swarrow_{A^*}}^*,L_{\succ_{A^*}}^*,R_{\prec_{A^*}}^*,
-L_{\nearrow_{A^*}}^*}(A^*)_v$ is a 2-cocycle. So $(A_v\oplus
(A_v)^*,A_v,(A^*)_v,\mathfrak{B}_S)$ is a standard Manin triple of
dendriform algebras associated to $\frak B_S$.

Conversely, if $(A_v\oplus (A_v)^*,A_v,(A_v)^*,\mathfrak{B}_S)$ is
a standard Manin triple of dendriform algebras associated to the
2-cocycle $\frak B_S$, then for all $x,y\in A,a^*,b^*\in A^*$, we
have
$$\langle x\wedge a^*,y\rangle =\mathfrak{B}_S(y,x\wedge
a^*)=-\mathfrak{B}_S(y\swarrow_Ax,a^*)=\langle
y,-R_{\swarrow_A}^*(x)a^*\rangle ,$$
$$\langle x\star a^*,b^*\rangle =\mathfrak{B}_S(b^*,x\star
a^*)=\mathfrak{B}_S(a^*\searrow_{A^*}b^*,x)= \langle
L_{\searrow_{A^*}}^*(a^*)x,b^*\rangle, $$  $$\langle x\vee
a^*,b^*\rangle =\mathfrak{B}_S(b^*,x\vee
a^*)=-\mathfrak{B}_S(a^*\nearrow_{A^*}b^*,x)= \langle
-L_{\nearrow_{A^*}}^*(a^*)x,b^*\rangle.$$ So $x\wedge
a^*=-R_{\swarrow_A}^*(x)a^*+L_{\succ_{A^*}}^*(a^*)x$. Similarly,
we show that
$$x\vee a^*=R_{\prec_A}^*(x)a^*-L_{\nearrow_{A^*}}^*(a^*)x,
a^*\wedge x=-R_{\swarrow_{A^*}}^*(a^*)x+L_{\succ_A}^*(x)a^*,
a^*\vee x=R_{\prec_{A^*}}^*(a^*)x-L_{\nearrow_A}^*(x)a^*,$$ for
all $x\in A,a^*\in A^*$. Hence
$(A_v,(A^*)_v,-R_{\swarrow_A}^*,L_{\succ_A}^*,R_{\prec_A}^*,
-L_{\nearrow_A}^*,-R_{\swarrow_{A^*}}^*,L_{\succ_{A^*}}^*,R_{\prec_{A^*}}^*,
-L_{\nearrow_{A^*}}^*)$ is a matched pair of dendriform
algebras.\end{proof}

\section{Bimodules and matched pairs of quadri-algebras}
\mlabel{sec:match}

In this section we introduce the notion of a Manin triple of
quadri-algebras associated to a nondegenerate invariant bilinear
form and establish its equivalence with a matched pair of
quadri-algebras on the one hand and with a Manin triple of
dendriform algebras associated to a nondegenerate 2-cocycle on the
other hand.

\begin{defn}{\rm \cite{Bai3} Let $(A,\nwarrow,\nearrow,\swarrow,\searrow)$ be a
quadri-algebra and let $V$ be a vector space. Let
$l_{\circ},r_{\circ}:A\rightarrow \frak{gl}(V)$ be eight linear maps, where
$\circ\in\{\nwarrow,\nearrow,\swarrow,\searrow\}$. Then $V$ or
$(V,l_{\nwarrow},r_{\nwarrow},l_{\nearrow},
r_{\nearrow}$, $l_{\swarrow}$, $r_{\swarrow}$,
$l_{\searrow},r_{\searrow})$) is called a {\bf bimodule} of $A$ if
for all $x,y\in A$,
\begin{equation} r_{\nwarrow}(y)r_{\nwarrow}(x)=r_{\nwarrow}(x\star y),
r_{\nwarrow}(y)l_{\nwarrow}(x)=l_{\nwarrow}(x)r_{\star}(y),
l_{\nwarrow}(x\nwarrow
y)=l_{\nwarrow}(x)l_{\star}(y),
\mlabel{eq:4.1}
\end{equation}\begin{equation}
r_{\nwarrow}(y)r_{\nearrow}(x)=r_{\nearrow}(x\prec y),
r_{\nwarrow}(y)l_{\nearrow}(x)=l_{\nearrow}(x)r_{\prec}(y),
l_{\nwarrow}(x\nearrow
y)=l_{\nearrow}(x)l_{\prec}(y),
\mlabel{eq:4.2}
\end{equation}\begin{equation}r_{\nearrow}(y)r_{\wedge}(x)=r_{\nearrow}(x\succ
y), r_{\nearrow}(y)l_{\wedge}(x)=l_{\nearrow}(x)r_{\succ}(y),
l_{\nearrow}(x\wedge
y)=l_{\nearrow}(x)l_{\succ}(y),
\mlabel{eq:4.3}
\end{equation}\begin{equation}r_{\nwarrow}(y)r_{\swarrow}(x)=r_{\swarrow}(x\wedge
y), r_{\nwarrow}(y)l_{\swarrow}(x)=l_{\swarrow}(x)r_{\wedge}(y),
l_{\nwarrow}(x\swarrow
y)=l_{\swarrow}(x)l_{\wedge}(y),
\mlabel{eq:4.4}
\end{equation}\begin{equation}r_{\nwarrow}(y)r_{\searrow}(x)=r_{\searrow}(x\nwarrow
y), r_{\nwarrow}(y)l_{\searrow}(x)=l_{\searrow}(x)r_{\nwarrow}(y),
l_{\nwarrow}(x\searrow
y)=l_{\searrow}(x)l_{\nwarrow}(y),
\mlabel{eq:4.5}
\end{equation}\begin{equation}r_{\nearrow}(y)r_{\vee}(x)=r_{\searrow}(x\nearrow
y), r_{\nearrow}(y)l_{\vee}(x)=l_{\searrow}(x)r_{\nearrow}(y),
l_{\nearrow}(x\vee
y)=l_{\searrow}(x)l_{\nearrow}(y),
\mlabel{eq:4.6}
\end{equation}\begin{equation}r_{\swarrow}(y)r_{\prec}(x)=r_{\swarrow}(x\vee
y), r_{\swarrow}(y)l_{\prec}(x)=l_{\swarrow}(x)r_{\vee}(y),
l_{\swarrow}(x\prec
y)=l_{\swarrow}(x)l_{\vee}(y),
\mlabel{eq:4.7}
\end{equation}\begin{equation}r_{\swarrow}(y)r_{\succ}(x)=r_{\searrow}(x\swarrow
y), r_{\swarrow}(y)l_{\succ}(x)=l_{\searrow}(x)r_{\swarrow}(y),
l_{\swarrow}(x\succ
y)=l_{\searrow}(x)l_{\swarrow}(y),
\mlabel{eq:4.8}
\end{equation}\begin{equation}r_{\searrow}(y)r_{\star}(x)=r_{\searrow}(x\searrow
y), r_{\searrow}(y)l_{\star}(x)=l_{\searrow}(x)r_{\searrow}(y),
l_{\searrow}(x\star
y)=l_{\searrow}(x)l_{\searrow}(y).
\mlabel{eq:4.9}
\end{equation}}
\mlabel{de:4.1}
\end{defn}

In fact, $(V,l_{\nwarrow},r_{\nwarrow},l_{\nearrow},
r_{\nearrow},l_{\swarrow},r_{\swarrow},l_{\searrow},r_{\searrow})$
is a bimodule of a quadri-algebra
$(A,\nwarrow,\nearrow,\swarrow,\searrow)$ if and only if the
direct sum $A\oplus V$ of the underlying vector spaces of $A$ and
$V$ is turned into a quadri-algebra (the {\bf semidirect product})
by defining multiplications in $A\oplus V$ by (we still denote
them by $\nwarrow,\nearrow,\swarrow,\searrow$):
\begin{equation}(x_1+u_1)\circ(x_2+u_2):=x_1\circ
x_2+ (l_{\circ}(x_1)u_2+r_{\circ}(x_2)u_1), \forall x_1,x_2\in
A,u_1,u_2\in V,
\circ\in\{\nwarrow,\nearrow,\swarrow,\searrow\}.
\mlabel{eq:4.10}
\end{equation} We
denote it by $A\ltimes_{l_{\nwarrow},r_{\nwarrow},l_{\nearrow},
r_{\nearrow},l_{\swarrow},r_{\swarrow},l_{\searrow},r_{\searrow}}V$
or simply $A\ltimes V$.

\begin{lemma}{\rm \cite{Bai3}}
Let $(A,\nwarrow,\nearrow,\swarrow,\searrow)$ be a quadri-algebra.
If $(V,l_{\nwarrow},r_{\nwarrow},l_{\nearrow},
r_{\nearrow},l_{\swarrow},r_{\swarrow},l_{\searrow}$,
$r_{\searrow})$ is a bimodule of $A$, then
$(V^*,r_{\searrow}^*,l_{\star}^*,-r_{\vee}^*,-l_{\prec}^*,
-r_{\succ}^*,-l_{\wedge}^*,r_{\star}^*,l_{\nwarrow}^*)$ is a
bimodule of $A$.
\mlabel{lem:4.2}
\end{lemma}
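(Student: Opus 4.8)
The plan is to verify directly that the eight operators
\[
l'_{\nwarrow}=r_{\searrow}^{*},\ r'_{\nwarrow}=l_{\star}^{*},\ l'_{\nearrow}=-r_{\vee}^{*},\ r'_{\nearrow}=-l_{\prec}^{*},\ l'_{\swarrow}=-r_{\succ}^{*},\ r'_{\swarrow}=-l_{\wedge}^{*},\ l'_{\searrow}=r_{\star}^{*},\ r'_{\searrow}=l_{\nwarrow}^{*}
\]
satisfy the twenty-seven identities $(4.1)$--$(4.9)$ that define a bimodule of the quadri-algebra $A$ on $V^{*}$. The engine is dualization: since $(ST)^{*}=T^{*}S^{*}$ and $(S+T)^{*}=S^{*}+T^{*}$, every identity among $(4.1)$--$(4.9)$ written for the primed operators becomes, upon passing to adjoints, an identity among the maps $l_{\circ}(\cdot),r_{\circ}(\cdot)$ acting on $V$, typically with the roles of $x$ and $y$ interchanged and with an overall sign. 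For instance, the third identity of $(4.5)$ for the primed operators, $l'_{\nwarrow}(x\searrow y)=l'_{\searrow}(x)\,l'_{\nwarrow}(y)$, reads $r_{\searrow}^{*}(x\searrow y)=r_{\star}^{*}(x)\,r_{\searrow}^{*}(y)$, which is precisely the adjoint of $r_{\searrow}(y)\,r_{\star}(x)=r_{\searrow}(x\searrow y)$ --- the first identity of $(4.9)$. One checks in this way that each primed identity is the adjoint either of a single member of $(4.1)$--$(4.9)$ (possibly after relabeling and a sign change) or of a specific $\mathbb{F}$-linear combination of several of them; a few of the ``middle'' primed identities moreover use the fact that $(V,l_{\star},r_{\star})$ is a bimodule of the associated associative algebra $(As(A),\star)$, which makes available commuting relations such as $l_{\star}(x)r_{\star}(y)=r_{\star}(y)l_{\star}(x)$ (these follow from $(4.1)$--$(4.9)$ through the associated horizontal and vertical dendriform dialgebras).

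To keep the bookkeeping manageable it helps to route the dendriform part of the verification through the semidirect sum. Recall that $(V,l_{\nwarrow},\ldots,r_{\searrow})$ being a bimodule of $A$ is equivalent to $B:=A\ltimes V$ being a quadri-algebra, and that the vertical operations of any quadri-algebra form a dendriform dialgebra, so that $B_{v}=A_{v}\ltimes V$ and $(V,l_{\wedge},r_{\wedge},l_{\vee},r_{\vee})$ --- with $l_{\wedge}=l_{\nwarrow}+l_{\nearrow}$, etc. --- is a bimodule of $A_{v}$. Applying the corollary that produces the dual bimodule $(-R_{\swarrow}^{*},L_{\succ}^{*},R_{\prec}^{*},-L_{\nearrow}^{*})$ of the associated vertical dendriform dialgebra to $B$, one gets a bimodule of $B_{v}$ on $B^{*}=A^{*}\oplus V^{*}$; restricting its action to the sub-dendriform dialgebra $A_{v}$ and to the sub-bimodule $V^{*}=\{f\in B^{*}\mid f|_{A}=0\}$ shows at once that $(V^{*},-r_{\swarrow}^{*},l_{\succ}^{*},r_{\prec}^{*},-l_{\nearrow}^{*})$ is a bimodule of $A_{v}$. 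A short computation identifies this quadruple with $(l'_{\nwarrow}+l'_{\nearrow},\,r'_{\nwarrow}+r'_{\nearrow},\,l'_{\swarrow}+l'_{\searrow},\,r'_{\swarrow}+r'_{\searrow})$, which both confirms the consistency of the primed operators at the dendriform level and supplies, as inputs, all the auxiliary identities (of the shape $l_{\succ}(x)l_{\prec}(y)=l_{\prec}(x\succ y)$ and the like) needed when reducing the remaining cases. The primed identities that genuinely see the individual operators $l'_{\nwarrow},l'_{\nearrow},l'_{\swarrow},l'_{\searrow}$ and their right analogues are then handled one at a time by the dualization mechanism of the previous paragraph.

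I expect the only real obstacle to be organizational. There are twenty-seven identities, the minus signs attached to $l'_{\nearrow},r'_{\nearrow},l'_{\swarrow},r'_{\swarrow}$ do not cancel uniformly and must be tracked carefully through each adjoint, and several primed identities do not match a single one of $(4.1)$--$(4.9)$ but only a particular linear combination, so one has to exhibit the right combination in each such case. No individual step is conceptually difficult, which is why peeling off the dendriform-level identities via the semidirect sum first, and only then running the remaining cases by hand, is the cleanest route to the statement.
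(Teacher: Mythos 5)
Your plan is correct, and there is nothing in the paper to compare it against: Lemma 4.2 is quoted from \cite{Bai3} without proof, so a direct verification like yours is exactly what is expected. The dualization mechanism works as you describe: writing $l'_{\circ},r'_{\circ}$ for the primed operators, each of the twenty-seven identities of Definition 4.1 for $(V^*,l'_{\nwarrow},\dots,r'_{\searrow})$ becomes, after taking adjoints (using $\rho^*(x)=(\rho(x))^*$ and $(ST)^*=T^*S^*$), either a single identity among (4.1)--(4.9) with $x$ and $y$ interchanged (e.g.\ your sample case, and likewise the primed versions of (4.1c), (4.2c), (4.4c), (4.6a), (4.8a), (4.9a), (4.9b)), or a dendriform-bimodule identity for $(V,l_\wedge,r_\wedge,l_\vee,r_\vee)$ over $A_v$ or for $(V,l_\prec,r_\prec,l_\succ,r_\succ)$ over $A_h$, or an associative-bimodule identity for $(V,l_\star,r_\star)$; the minus signs attached to $l'_{\nearrow},r'_{\nearrow},l'_{\swarrow},r'_{\swarrow}$ always occur in pairs or cancel against a sign on the other side, so no sign obstruction arises. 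One refinement: your semidirect-sum argument handles only the \emph{vertical} dendriform level, but several primed identities (the primed (4.3a), (4.4b), (4.6b), (4.7c), (4.8c), for instance) reduce to \emph{horizontal} dendriform bimodule identities for $(V,l_\prec,r_\prec,l_\succ,r_\succ)$ over $(A,\prec,\succ)$; these are obtained by the identical argument applied to $(A\ltimes V)_h=A_h\ltimes V$, and together with the associative relations they exhaust all the linear combinations you need, so the gap is purely one of bookkeeping rather than of substance.
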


\begin{defn}{\rm
   Let
$(A,\nwarrow_A,\nearrow_A,\swarrow_A,\searrow_A)$ and
$(B,\nwarrow_B,\nearrow_B,\swarrow_B,\searrow_B)$ be two
quadri-algebras. Suppose that there exist linear maps
$l_{\nwarrow_A},r_{\nwarrow_A},l_{\nearrow_A},r_{\nearrow_A},
l_{\swarrow_A},r_{\swarrow_A},l_{\searrow_A},r_{\searrow_A}:A\rightarrow
\frak{gl}(B)$ and $l_{\nwarrow_B},r_{\nwarrow_B}$,
$l_{\nearrow_B}, r_{\nearrow_B}$, $l_{\swarrow_B},r_{\swarrow_B}$,
$l_{\searrow_B},r_{\searrow_B}:B\rightarrow \frak{gl}(A)$ such
that for all $x,y\in A,a,b\in B$,
\begin{equation}(x+a)\diamond(y+b):=x\diamond_Ay+l_{\diamond_B}(a)y+
r_{\diamond_B}(b)x+a\diamond_Bb+ l_{\diamond_A}(x)b+
r_{\diamond_A}(y)a,\diamond\in\{\nwarrow,\nearrow,\swarrow,\searrow\},
\mlabel{eq:4.11}
\end{equation}
define a quadri-algebra structure on $A\oplus B$. Then
$(A,B,l_{\nwarrow_A},r_{\nwarrow_A}$,
$l_{\nearrow_A},r_{\nearrow_A}$, $ l_{\swarrow_A}$,
$r_{\swarrow_A}, l_{\searrow_A}$, $r_{\searrow_A},l_{\nwarrow_B}$,
$r_{\nwarrow_B},l_{\nearrow_B}$, $r_{\nearrow_B},
l_{\swarrow_B},r_{\swarrow_B},l_{\searrow_B},r_{\searrow_B})$ is
called {\bf a matched pair of quadri-algebras} and the
quadri-algebra structure on $A\oplus B$ is denoted by
$A\bowtie_{l_{\nwarrow_A},r_{\nwarrow_A},l_{\nearrow_A},r_{\nearrow_A},
l_{\swarrow_A},r_{\swarrow_A},l_{\searrow_A},r_{\searrow_A}}^{l_{\nwarrow_B},
r_{\nwarrow_B},l_{\nearrow_B},r_{\nearrow_B},
l_{\swarrow_B},r_{\swarrow_B},l_{\searrow_B},r_{\searrow_B}}B$ or
simply $A\bowtie B$.} \mlabel{de:4.3}
\end{defn}

\begin{remark}  {\rm Similar to Proposition~\mref{pp:3.4}, one can also write down the necessary and
sufficient conditions that the above linear maps make $A\oplus B$ into
a quadri-algebra. We omit the details since we
will not use such a conclusion in this paper. In
particular, in this case, $A$ and $B$ are bimodules of $B$ and $A$
respectively.}
\mlabel{rk:4.4}
\end{remark}

\begin{prop}
 Let
$(A,B,l_{\nwarrow_A},r_{\nwarrow_A},l_{\nearrow_A},r_{\nearrow_A},
l_{\swarrow_A},r_{\swarrow_A},l_{\searrow_A}$,
$r_{\searrow_A},l_{\nwarrow_B}, r_{\nwarrow_B}$, $l_{\nearrow_B}$,
$r_{\nearrow_B}$, $ l_{\swarrow_B}$, $r_{\swarrow_B}$,
$l_{\searrow_B}$, $r_{\searrow_B})$ be a matched pair of
quadri-algebras. Then $((A)_v,(B)_v,
l_{\nwarrow_A}+l_{\nearrow_A},r_{\nwarrow_A}+r_{\nearrow_A},
l_{\swarrow_A}$ $+l_{\searrow_A},r_{\swarrow_A}+r_{\searrow_A},
l_{\nwarrow_B}+l_{\nearrow_B},r_{\nwarrow_B}+r_{\nearrow_B},
l_{\swarrow_B}+l_{\searrow_B},r_{\swarrow_B}+r_{\searrow_B})$ is a
matched pair of dendriform algebras. \mlabel{pp:4.5}
\end{prop}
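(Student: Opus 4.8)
The plan is to realize the claimed matched pair as the one arising, via Proposition 3.4, from the associated vertical dendriform dialgebra of $A\bowtie B$, and then simply to read off its structure maps. First I would observe that by hypothesis the vector space $A\oplus B$ carries the quadri-algebra structure of Definition 4.3, and that $A$ and $B$ are sub-quadri-algebras of it: for $x,y\in A$ and any of the four operations $\diamond$, the formula of Definition 4.3 collapses to $x\diamond y=x\diamond_A y\in A$ (the action terms vanish since the maps are linear), and symmetrically on $B$. Applying Proposition 2.7(2) to $A\bowtie B$, the vector space $A\oplus B$ becomes a dendriform dialgebra $(A\oplus B)_v=(A\oplus B,\wedge,\vee)$ with $\wedge=\nwarrow+\nearrow$ and $\vee=\swarrow+\searrow$; and since $A$ and $B$ are sub-quadri-algebras, $A_v$ and $B_v$ are sub-dendriform-dialgebras, so $(A\oplus B)_v$ is the vector-space direct sum $A_v\oplus B_v$ of two sub-dialgebras.

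By the converse assertion at the end of Proposition 3.4, $(A\oplus B)_v$ therefore arises from a matched pair of dendriform dialgebras $(A_v,B_v,\dots)$, whose structure maps are moreover uniquely determined by it: putting the appropriate summands equal to zero in the formulas of Proposition 3.4 exhibits each of the eight maps as a projection of a single mixed product, for instance $l_{\wedge_A}(x)b=\pi_B(x\wedge b)$ and $r_{\wedge_B}(b)x=\pi_A(x\wedge b)$ for $x\in A$, $b\in B$, where $\pi_A,\pi_B$ denote the projections onto the two summands. It then suffices to compute these mixed products using the matched-pair-of-quadri-algebras formula: for $x\in A$, $b\in B$,
\begin{align*}
x\wedge b &= x\nwarrow b+x\nearrow b\\
&= \bigl(r_{\nwarrow_B}(b)+r_{\nearrow_B}(b)\bigr)x+\bigl(l_{\nwarrow_A}(x)+l_{\nearrow_A}(x)\bigr)b,
\end{align*}
with the first summand in $A$ and the second in $B$, and analogously for $a\wedge y$, $x\vee b$ and $a\vee y$ (using $\vee=\swarrow+\searrow$). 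Reading off the two projections gives $l_{\wedge_A}=l_{\nwarrow_A}+l_{\nearrow_A}$, $r_{\wedge_A}=r_{\nwarrow_A}+r_{\nearrow_A}$, $l_{\vee_A}=l_{\swarrow_A}+l_{\searrow_A}$, $r_{\vee_A}=r_{\swarrow_A}+r_{\searrow_A}$, together with the same four identities with $B$ in place of $A$, which is precisely the list of operators in the statement.

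The only point I expect to require genuine care, rather than a real obstacle, is the uniqueness used above: one must be sure that a dendriform dialgebra presented as $A_v\oplus B_v$ determines its matched-pair data unambiguously, not merely admits some such data; as indicated, this is immediate from the defining formulas of Proposition 3.4. An alternative route, avoiding Proposition 3.4, would be to verify directly that the eight summed operators define bimodules of $A_v$ and $B_v$ and satisfy all of its compatibility identities, by adding together the corresponding axioms of Definition 4.1 (and the dendriform-dialgebra axioms for $A_v$ and $B_v$); but passing through the embedding into $A\bowtie B$ makes that bookkeeping unnecessary.
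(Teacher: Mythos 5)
Your proposal is correct. The paper itself dismisses this proposition with ``It is straightforward,'' so there is no written argument to compare against; what you supply is a clean way of making the straightforwardness precise. Your route --- observe that $A$ and $B$ are sub-quadri-algebras of $A\bowtie B$, pass to the associated vertical dendriform dialgebra via Proposition 2.7(2), and then invoke the converse clause of Proposition 3.4 together with the fact that the decomposition formulas $x\wedge b=r_{\wedge_B}(b)x+l_{\wedge_A}(x)b$, etc., pin the eight structure maps down as projections of the mixed products --- correctly identifies those maps as the claimed sums $l_{\nwarrow}+l_{\nearrow}$, $r_{\nwarrow}+r_{\nearrow}$, $l_{\swarrow}+l_{\searrow}$, $r_{\swarrow}+r_{\searrow}$, and the matched-pair property is then exactly what the converse of Proposition 3.4 delivers. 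This buys you freedom from the alternative you mention, namely checking by hand that the summed operators satisfy the bimodule axioms and the eighteen compatibility identities (which is presumably the computation the authors wave at); the only delicate point, uniqueness of the matched-pair data attached to a given decomposition into sub-dialgebras, is one you flag and resolve correctly, since the defining formulas of Proposition 3.4 force the structure maps to be the two projections of each mixed product.
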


\begin{proof} It is straightforward.\end{proof}

\begin{prop}  Let $(A,\nwarrow,\nearrow,\swarrow,\searrow)$ be a
quadri-algebra. Suppose that there is a quadri-algebra structure
$(\nwarrow_*,\nearrow_*,\swarrow_*,\searrow_*)$ on the dual space
$A^*$. Then $(A_v,(A^*)_v,-R_{\swarrow}^*,L_{\succ}^*,R_{\prec}^*,
-L_{\nearrow}^*$, $-R_{\swarrow_*}^*,L_{\succ_*}^*, R_{\prec_*}^*,
-L_{\nearrow_*}^*)$ is a matched pair of dendriform algebras if
and only if $(A,A^*,R_{\searrow}^*,L_{\star}^*$, $-R_{\vee}^*,
-L_{\prec}^*,-R_{\succ}^*,-L_{\wedge}^*, R_{\star}^*,
L_{\nwarrow}^*,R_{\searrow_*}^*,L_{\star_*}^*,-R_{\vee_*}^*,
-L_{\prec_*}^*,-R_{\succ_*}^*,-L_{\wedge_*}^*,R_{\star_*}^*,
L_{\nwarrow_*}^*)$ is a matched pair of quadri-algebras.
\mlabel{pp:4.6}
\end{prop}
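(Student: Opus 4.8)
The plan is to identify each of the two matched‑pair conditions with one and the same quadri‑algebra structure on the double space $A\oplus A^*$, passing between the two ``levels'' through the $2$‑cocycle correspondence of Proposition~2.11.

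\emph{Assume the quadri‑algebra tuple is a matched pair.} Then Eq.~(4.11) defines a quadri‑algebra on $A\oplus A^*$, and its eight operators $R_{\searrow}^*,L_{\star}^*,-R_{\vee}^*,-L_{\prec}^*,-R_{\succ}^*,-L_{\wedge}^*,R_{\star}^*,L_{\nwarrow}^*$ (and the corresponding operators of $A^*$) are exactly the output of Lemma~4.2 applied to the regular bimodule; in particular $A^*$ is a bimodule of $A$ and $A$ a bimodule of $A^*$. By Proposition~4.5 the associated vertical objects form a matched pair of dendriform dialgebras whose action of $A$ on $A^*$ is $(l_{\nwarrow}+l_{\nearrow},\ r_{\nwarrow}+r_{\nearrow},\ l_{\swarrow}+l_{\searrow},\ r_{\swarrow}+r_{\searrow})$. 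Using $\wedge=\nwarrow+\nearrow$, $\vee=\swarrow+\searrow$, $\prec=\nwarrow+\swarrow$, $\succ=\nearrow+\searrow$ and $\star=\prec+\succ$, these sums collapse to $R_{\searrow}^*-R_{\vee}^*=-R_{\swarrow}^*$, $L_{\star}^*-L_{\prec}^*=L_{\succ}^*$, $-R_{\succ}^*+R_{\star}^*=R_{\prec}^*$ and $-L_{\wedge}^*+L_{\nwarrow}^*=-L_{\nearrow}^*$, which are precisely the operators in the statement; the same four identities treat the action of $A^*$ on $A$. Hence the dendriform‑dialgebra tuple is a matched pair.

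\emph{Assume the dendriform tuple is a matched pair.} By Proposition~3.5 this is equivalent to $(A_v\oplus(A^*)_v,A_v,(A^*)_v,\mathfrak{B}_S)$ being a Manin triple of dendriform dialgebras, so that $A_v\bowtie(A^*)_v$ (Proposition~3.4) carries the symmetric form $\mathfrak{B}_S$ of (3.2), which is visibly nondegenerate, as a $2$‑cocycle. Proposition~2.11 then puts a quadri‑algebra structure $\nwarrow,\nearrow,\swarrow,\searrow$ on $A\oplus A^*$, defined by (2.14)--(2.15), whose associated vertical dendriform dialgebra is $A_v\bowtie(A^*)_v$. Since $A$ and $A^*$ are isotropic, $A^{\perp}=A$ and $(A^*)^{\perp}=A^*$ with respect to $\mathfrak{B}_S$. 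For $x,y\in A$ and any $z\in A$ the right‑hand sides of (2.14)--(2.15) are of the form $\pm\mathfrak{B}_S(u,v)$ with $u,v\in A$ (because $A_v$ is a subdialgebra of $A_v\bowtie(A^*)_v$), hence vanish, so $x\diamond y\in A^{\perp}=A$; pairing instead against $c^*\in A^*$ and expanding the relevant product via (3.21)--(3.22) forces $x\diamond y$ to coincide with the original operation of the quadri‑algebra $A$ (for $\nwarrow$ this uses the $A^*$‑component $(y\star c^*)_{A^*}=R_{\nwarrow}^*(y)c^*$, valid since $\prec=\nwarrow+\swarrow$), and likewise on $A^*$. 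Finally one computes the mixed products the same way: $x\nwarrow a^*$ is pinned down by $\mathfrak{B}_S(x\nwarrow a^*,z)=\langle a^*,z\searrow x\rangle$ for $z\in A$ and $\mathfrak{B}_S(x\nwarrow a^*,c^*)=\langle x,a^*\star_* c^*\rangle$ for $c^*\in A^*$ (where $\star_*$ is the total product of $A^*$), giving $x\nwarrow a^*=L_{\star_*}^*(a^*)x+R_{\searrow}^*(x)a^*$; similarly $a^*\nwarrow x=R_{\searrow_*}^*(a^*)x+L_{\star}^*(x)a^*$, and likewise for $\nearrow,\swarrow,\searrow$. Thus $(A\oplus A^*,\nwarrow,\nearrow,\swarrow,\searrow)$ is exactly the bowtie $A\bowtie A^*$ of (4.11) for the tuple in the statement, which by Definition~4.3 is therefore a matched pair of quadri‑algebras.

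The main obstacle is the bookkeeping in the second direction: each mixed product must be split into its $A$‑ and $A^*$‑components via (3.21)--(3.22), substituted into (2.14)--(2.15), and the resulting combinations of operators collapsed back to single operators by repeated use of the identities among $\nwarrow,\nearrow,\swarrow,\searrow$ and $\prec,\succ,\wedge,\vee,\star$; all sixteen cross‑operators (eight in each direction) must be matched, and one must keep straight the two distinct meanings of the superscript $*$ — the transpose of (1.5) versus the module dual of (1.7).
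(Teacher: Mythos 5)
Your proposal is correct and takes essentially the same route as the paper: the quadri-to-dendriform direction is exactly Proposition 4.5 with the summed operators collapsing via $\vee=\swarrow+\searrow$, $\star=\prec+\succ$, $\wedge=\nwarrow+\nearrow$, and the converse passes through the Manin triple of Proposition 3.5 and the quadri-algebra structure of Proposition 2.11, identifying the products on $A\oplus A^*$ by pairing against $\mathfrak{B}_S$, just as in the paper's computation of $x\nwarrow_{\bullet}a^*=R_{\searrow}^*(x)a^*+L_{\star_*}^*(a^*)x$ and its analogues. The only difference is cosmetic: you make explicit why $A$ and $A^*$ sit inside as sub-quadri-algebras with their original operations, a point the paper simply asserts.
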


\begin{proof} By Proposition~\mref{pp:4.5}, we only need to prove the ``only if"
part of the conclusion. In fact, if
$(A_v,(A^*)_v,-R_{\swarrow}^*,L_{\succ}^*,R_{\prec}^*,
-L_{\nearrow}^*,-R_{\swarrow_*}^*,L_{\succ_*}^*,R_{\prec_*}^*,
-L_{\nearrow_*}^*)$ is a matched pair of dendriform algebras, then
by Proposition~\ref{pp:3.5}, we find that
$(A_v\oplus(A^*)_v,A_v,(A^*)_v,\mathfrak{B}_S)$ is a standard
Manin triple of dendriform algebras associated to the 2-cocycle
$\frak B_S$. Hence there exists a quadri-algebra structure on
$A_v\bowtie_{-R_{\swarrow}^*,L_{\succ}^*,R_{\prec}^*,
-L_{\nearrow}^*}^{-R_{\swarrow_*}^*,L_{\succ_*}^*,R_{\prec_*}^*,
-L_{\nearrow_*}^*}(A^*)_v$ given by Proposition~\mref{pp:2.11}
which we denote by $(\nwarrow_{\bullet},
\nearrow_{\bullet},\swarrow_{\bullet},\searrow_{\bullet}$).
Moreover, $A$ and $A^*$ are the quadri-subalgebras. For all
$x,y\in A,a^*,b^*\in A^*$, we have
\begin{eqnarray*}
&&\langle x\nwarrow_{\bullet}a^*,y\rangle
=\mathfrak{B}_S(x,a^*\star_{\bullet}y)= \langle
x,L_{\searrow}^*(y)a^*\rangle =\langle
R_{\searrow}^*(x)a^*,y\rangle,\\
&&\langle x\nwarrow_{\bullet}a^*,b^*\rangle =
\mathfrak{B}_S(x,a^*\star_*b^*)=\langle
L_{\star_*}^*(a^*)x,b^*\rangle. \end{eqnarray*}So
$x\nwarrow_{\bullet}a^*=R_{\searrow}^*(x)a^*+L_{\star_*}^*(a^*)x$.
Similarly, we show that
\begin{eqnarray*}
&&x\nearrow_{\bullet}a^*=-R_{\vee}^*(x)a^*-L_{\prec_*}^*(a^*)x,
x\swarrow_{\bullet}a^*=-R_{\succ}^*(x)a^*-L_{\wedge_*}^*(a^*)x,\\
&&x\searrow_{\bullet}a^*=R_{\star}^*(x)a^*+L_{\nwarrow_*}^*(a^*)x,
a^*\nwarrow_{\bullet}x=R_{\searrow_*}^*(a^*)x+L_{\star}^*(x)a^*,\\
&&a^*\nearrow_{\bullet}x=-R_{\vee_*}^*(a^*)x-L_{\prec}^*(x)a^*,
a^*\swarrow_{\bullet}x=-R_{\succ_*}^*(a^*)x-L_{\wedge}^*(x)a^*,
\end{eqnarray*}
and
$a^*\searrow_{\bullet}x=R_{\star_*}^*(a^*)x+L_{\nwarrow}^*(x)a^*.$
Therefore $(A,A^*,R_{\searrow}^*,L_{\star}^*,-R_{\vee}^*,
-L_{\prec}^*,-R_{\succ}^*,-L_{\wedge}^*,R_{\star}^*,
L_{\nwarrow}^*$, $R_{\searrow_*}^*$, $L_{\star_*}^*,-R_{\vee_*}^*,
-L_{\prec_*}^*, -R_{\succ_*}^*,-L_{\wedge_*}^*,R_{\star_*}^*,
L_{\nwarrow_*}^*)$ is a matched pair of
quadri-algebras.\end{proof}

In fact, the equivalence in Proposition~\mref{pp:4.6} between two matched pairs can be
interpreted in terms of their corresponding Manin triples as follows, in Corollary~\mref{co:4.10}.

\begin{defn}{\rm  Let $(A,\nwarrow,\nearrow,\swarrow,\searrow)$ be a
quadri-algebra and let $\mathfrak{B}$ be a symmetric bilinear
form. If $\mathfrak{B}$ satisfies
Eqs.~(\meqref{eq:2.14})-(\meqref{eq:2.15}), then $\mathfrak{B}$ is
called {\bf invariant} on $A$.} \mlabel{de:4.7}
\end{defn}

\begin{prop}{\rm \cite{Bai3}} Let $(A,\nwarrow,\nearrow,\swarrow,\searrow)$ be a
quadri-algebra and let $\mathfrak{B}$ be a symmetric bilinear
form. If $\mathfrak{B}$ is invariant on $A$, then $\mathfrak{B}$
is a 2-cocycle of the associated vertical dendriform algebra
$(A_v,\wedge,\vee)$. Conversely, if $\mathfrak{B}$ is a
nondegenerate 2-cocycle of a dendriform algebra, then
$\mathfrak{B}$ is invariant on the quadri-algebra given by
Eqs.~(\meqref{eq:2.14})-(\meqref{eq:2.15}). \mlabel{pp:4.8}
\end{prop}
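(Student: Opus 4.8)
The plan is to prove both directions by direct computation, exploiting the fact that equations (2.14)--(2.15) (invariance) and equation (2.13) (the 2-cocycle condition for the vertical dendriform dialgebra) are linear relations among values of $\mathfrak B$ on triple products, and that the four quadri-operations in (2.14)--(2.15) are \emph{defined} from the dendriform operations $\wedge,\vee$ via nondegeneracy of $\mathfrak B$. Concretely, for the forward implication I would assume $\mathfrak B$ is invariant, i.e.\ satisfies (2.14) and (2.15), and simply add the relevant pairs of those equations: since $x\wedge y=x\nwarrow y+x\nearrow y$ and $x\vee y=x\swarrow y+x\searrow y$ and $x\star y$ is the sum of all four, the 2-cocycle identity $\mathfrak B(x\star y,z)=\mathfrak B(y,z\wedge x)+\mathfrak B(x,y\vee z)$ should drop out as an $\mathbb F$-linear combination of the eight scalar equations packaged in (2.14)--(2.15). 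So the first step is to write out (2.14)--(2.15) as eight scalar identities $\mathfrak B(x\circ y,z)=\pm\mathfrak B(\cdot,\cdot)$ for $\circ\in\{\nwarrow,\nearrow,\swarrow,\searrow\}$, and then sum the four of them corresponding to the four pieces of $\star$ on the left, checking that the right-hand sides collapse to $\mathfrak B(y,z\wedge x)+\mathfrak B(x,y\vee z)$ after using $\wedge=\nwarrow+\nearrow$ and $\vee=\swarrow+\searrow$.

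For the converse, I would start from a dendriform dialgebra $(A,\wedge,\vee)$ with a nondegenerate 2-cocycle $\mathfrak B$, and take the quadri-algebra $(A,\nwarrow,\nearrow,\swarrow,\searrow)$ produced by Proposition~2.15 (defined by formulas (2.15)--(2.16) in the statement of that proposition, i.e.\ $\mathfrak B(x\nwarrow y,z)=\mathfrak B(x,y\star z)$, $\mathfrak B(x\nearrow y,z)=-\mathfrak B(y,z\vee x)$, $\mathfrak B(x\swarrow y,z)=-\mathfrak B(x,y\wedge z)$, $\mathfrak B(x\searrow y,z)=\mathfrak B(y,z\star x)$). The task is then to verify that this particular $\mathfrak B$ is invariant on this quadri-algebra, i.e.\ satisfies (2.14)--(2.15). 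Each of the eight scalar equations in (2.14)--(2.15) is of the form $\mathfrak B(x\circ y,z)=\mathfrak B(\cdot,\cdot)$ or $=\mathfrak B(x,\cdot)$, and I would verify it by substituting the defining formula for $x\circ y$ on the left and then rewriting the right-hand side using the defining formulas together with the 2-cocycle identity (2.13) and its symmetrized consequences. Since $\mathfrak B$ is symmetric, (2.13) also gives $\mathfrak B(z\wedge x,y)=\mathfrak B(x\star y,z)-\mathfrak B(x,y\vee z)$ and similar rearrangements, which is exactly the bookkeeping one needs.

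I expect the main obstacle to be purely combinatorial: there are eight scalar identities to match, each involving products like $\wedge,\vee,\succ,\prec,\star$ composed in various associativity patterns, and one must be careful to use the right rearrangement of the 2-cocycle relation (together with the dendriform axioms (2.1), which relate $(x\prec y)\prec z$, $(x\succ y)\prec z$, $(x\star y)\succ z$ to products with the second and third arguments combined) in each case. There is no conceptual difficulty, but the identification has to be tracked symbol by symbol; the cleanest route is to reduce everything, on both sides of each target equation, to expressions of the form $\mathfrak B(u, v\,\sharp\,(w\,\flat\,\cdot))$ with a fixed nesting, using (2.13) to peel off outer $\star$'s and (2.1) to re-associate, and then observe the two sides agree term by term. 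Since Proposition~2.15 already guarantees $(A,\nwarrow,\nearrow,\swarrow,\searrow)$ is a quadri-algebra with $(A_v,\wedge,\vee)$ as its associated vertical dendriform dialgebra, the only genuinely new content here is the invariance of $\mathfrak B$, and that follows from the defining formulas almost immediately once the forward direction's linear-combination bookkeeping is run in reverse.
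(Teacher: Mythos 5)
Your proposal is correct, but note first that the paper itself gives no proof of this proposition: it is recalled from \cite{Bai3} (hence the citation in its statement), so there is no internal argument to compare against. On the merits, your forward direction is exactly the right computation. Invariance consists of the four scalar identities (four, not eight --- each of (2.14) and (2.15) packages two): $\mathfrak{B}(x\nwarrow y,z)=\mathfrak{B}(x,y\star z)$, $\mathfrak{B}(x\nearrow y,z)=-\mathfrak{B}(y,z\vee x)$, $\mathfrak{B}(x\swarrow y,z)=-\mathfrak{B}(x,y\wedge z)$, $\mathfrak{B}(x\searrow y,z)=\mathfrak{B}(y,z\star x)$; summing them and using $\star=\wedge+\vee$ gives $\mathfrak{B}(x\star y,z)=\mathfrak{B}(x,y\star z-y\wedge z)+\mathfrak{B}(y,z\star x-z\vee x)=\mathfrak{B}(x,y\vee z)+\mathfrak{B}(y,z\wedge x)$, which is precisely the 2-cocycle condition (2.13) for $(A_v,\wedge,\vee)$.

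For the converse you over-engineer the verification. The quadri-algebra in question is the one whose operations are \emph{defined} by Eqs. (2.14)--(2.15) through the nondegeneracy of $\mathfrak{B}$ (this is Proposition 2.11 of the paper, not 2.15 as you label it), so invariance of $\mathfrak{B}$ --- which by Definition 4.7 means exactly that (2.14)--(2.15) hold --- is true by construction; no rewriting with the 2-cocycle identity, its symmetrized forms, or the dendriform axioms is needed at that step. Those ingredients are only required inside Proposition 2.11, already invoked, to show that the operations so defined satisfy the nine quadri-algebra axioms and have $(A,\wedge,\vee)$ as associated vertical dendriform dialgebra. Your closing sentence acknowledges this near-tautology, so the plan is sound; just drop the intermediate bookkeeping and correct the count of identities.
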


\begin{defn}
{\rm Let $(A,\nwarrow,\nearrow,\swarrow,\searrow)$ be a
quadri-algebra. Suppose that there is a quadri-algebra structure
on $A^*$. If there is a quadri-algebra structure on the direct sum
of the underlying vector space of $A$ and $A^*$ such that $A$ and
$A^*$ are quadri-subalgebras and the bilinear form
$\mathfrak{B}_S$ on $A\oplus A^*$ given by Eq.~(\ref{eq:3.2}) is
invariant, then $(A\bowtie A^*,A,A^*,\mathfrak{B}_S)$ is called a
{\bf (standard) Manin triple of quadri-algebras associated to the
nondegenerate invariant bilinear form $\frak B_S$}.}
\mlabel{de:4.9}
\end{defn}

By Proposition~\ref{pp:4.8}, the following conclusion is obvious:

\begin{coro} The quadruple $(A\bowtie
A^*,A,A^*,\mathfrak{B}_S)$ is a standard Manin triple of
quadri-algebras associated to the nondegenerate invariant bilinear
form $\frak B_S$ if and only if $(A_v\bowtie
(A_v)^*,A_v,A_v^*,{\mathfrak B}_S)$ is a standard Manin triple of
dendriform algebras associated to the nondegenerate 2-cocycle
$\frak B_S$. \mlabel{co:4.10}
\end{coro}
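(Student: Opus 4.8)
The plan is to deduce Corollary 4.11 directly from the chain of equivalences already established, rather than re-examining the defining axioms. The statement to prove is that $(A\bowtie A^*,A,A^*,\mathfrak{B}_S)$ is a Manin triple of quadri-algebras associated to a nondegenerate invariant bilinear form if and only if $(A_v\bowtie (A_v)^*,A_v,A_v^*,\mathfrak{B}_S)$ is a Manin triple of dendriform dialgebras associated to a nondegenerate $2$-cocycle. First I would unwind the definition (Definition 4.10) of the left-hand side: it says precisely that there is a quadri-algebra structure on $A\oplus A^*$ making $A$ and $A^*$ quadri-subalgebras, with $\mathfrak{B}_S$ invariant. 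By Proposition 4.7, invariance of $\mathfrak{B}_S$ on this quadri-algebra is equivalent to $\mathfrak{B}_S$ being a nondegenerate $2$-cocycle of the associated vertical dendriform dialgebra $(A\oplus A^*)_v$, and conversely the quadri-algebra can be recovered from such a $2$-cocycle via Eqs. (2.14)--(2.15). This is the bridge between the quadri level and the dendriform level.

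Next I would observe that the associated vertical dendriform dialgebra of $A\bowtie A^*$ is exactly $A_v\bowtie (A_v)^*$: the matched-pair data on the quadri side restricts, by Proposition 4.6 (or its analogue for the specific representations in Proposition 4.8), to the matched-pair data on the dendriform side, and the underlying vector space and bilinear form $\mathfrak{B}_S$ are unchanged. Since $\mathfrak{B}_S$ being a $2$-cocycle on $(A\oplus A^*)_v$ with $A_v,(A^*)_v$ isotropic sub-dialgebras is exactly the definition of a Manin triple of dendriform dialgebras associated to a nondegenerate $2$-cocycle (Definition 3.2, together with the identification that $A,A^*$ isotropic with respect to $\mathfrak{B}_S$ is automatic from (3.2)), the two conditions coincide. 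I would also note that nondegeneracy of $\mathfrak{B}_S$ holds unconditionally, so it need not be tracked as a separate hypothesis on either side.

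Concretely the argument runs: $(A\bowtie A^*,A,A^*,\mathfrak{B}_S)$ is a Manin triple of quadri-algebras $\iff$ $A\oplus A^*$ carries a quadri-algebra structure with $A,A^*$ subalgebras and $\mathfrak{B}_S$ invariant $\iff$ (by Proposition 4.7) $(A\oplus A^*)_v = A_v\bowtie (A_v)^*$ is a dendriform dialgebra with $A_v,(A^*)_v$ sub-dialgebras and $\mathfrak{B}_S$ a $2$-cocycle $\iff$ $(A_v\bowtie (A_v)^*, A_v, A_v^*, \mathfrak{B}_S)$ is a Manin triple of dendriform dialgebras associated to a nondegenerate $2$-cocycle. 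The forward direction of the middle equivalence uses the first half of Proposition 4.7; the reverse direction uses the second half, which recovers a quadri-algebra structure from the $2$-cocycle via Eqs. (2.14)--(2.15), and one must check that under this recovery $A$ and $A^*$ remain quadri-subalgebras — but this is exactly what Proposition 4.8 already verifies, since the operations $\nwarrow_\bullet,\nearrow_\bullet,\swarrow_\bullet,\searrow_\bullet$ produced there restrict to the original operations on $A$ and on $A^*$.

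The main obstacle is the bookkeeping in the reverse direction: one must confirm that the quadri-algebra reconstructed on $A_v\bowtie(A_v)^*$ from the nondegenerate $2$-cocycle $\mathfrak{B}_S$ via Proposition 2.14 genuinely has $A$ and $A^*$ as quadri-subalgebras, i.e. that it is of the ``bowtie'' form of Definition 4.9 rather than some exotic structure. This is precisely the content of the computation carried out in the proof of Proposition 4.8, so rather than redoing it I would simply cite Proposition 4.8 together with Proposition 3.5 (which identifies the Manin triple of dendriform dialgebras with the bowtie construction on the dendriform side) to close the loop. Everything else is a formal chaining of ``if and only if'' statements, so the corollary follows, as claimed, immediately from Proposition 4.8 and Proposition 4.7.
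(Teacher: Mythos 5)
Your argument is correct and follows essentially the same route as the paper, which simply derives the corollary from the invariance/2-cocycle correspondence (your ``Proposition 4.7'' is the paper's Proposition 4.8), with the converse's subalgebra bookkeeping delegated, as you do, to the already-established matched-pair equivalences (your ``Proposition 4.8'' is the paper's Proposition 4.6, used together with Proposition 3.5 and the reconstruction of Proposition 2.11 via Eqs.\ (2.14)--(2.15)). Apart from these off-by-one citation labels, the content matches the paper's intended proof, which states the conclusion as an immediate consequence of that correspondence.
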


\begin{remark}{\rm By Proposition~\ref{pp:4.6}, it is obvious that a standard Manin triple of quadri-algebras
associated to the nondegenerate invariant bilinear form $\frak
B_S$ can be interpreted in terms of a matched pair of
quadri-algebras (cf. Theorem~\mref{thm:5.3})}. \mlabel{rk:4.11}
\end{remark}

\section{Quadri-bialgebras}
\mlabel{sec:qbial}

With the preparations in the previous sections, we now introduce
the notion of a quadri-bialgebra and establish its equivalence
with a Manin triple of dendriform algebras and of quadri-algebras,
and a matched pair of dendriform algebras and of quadri-algebras.

\begin{prop} Let
$(A,\nwarrow,\nearrow,\swarrow,\searrow,\alpha,\beta,\tilde{\alpha},\tilde{\beta})$
be a quadri-algebra equipped with four cooperations
$\alpha,\beta,\tilde{\alpha},\tilde{\beta}:A\rightarrow A\otimes A$.
Suppose that
$\alpha^*,\beta^*,\tilde{\alpha}^*,\tilde{\beta}^*:A^*\otimes
A^*\subset(A\otimes A)^*\rightarrow A^*$ induce a quadri-algebra
structure on $A^*$. Set
$\nwarrow_*:=\alpha^*,\nearrow_*:=\beta^*,\swarrow_*:=
\tilde{\alpha}^*,\searrow_*:=\tilde{\beta}^*$. Then
$(A_v,(A^*)_v,-R_{\swarrow}^*,L_{\succ}^*,R_{\prec}^*,
-L_{\nearrow}^*,-R_{\swarrow_*}^*,L_{\succ_*}^*, R_{\prec_*}^*,
-L_{\nearrow_*}^*)$ is a matched pair of dendriform algebras if
and only if the following equations hold:
\begin{equation}\tilde{\alpha}(x\star y)= (R_{\wedge}(y)\otimes
1)\tilde{\alpha}(x)+ (1\otimes
L_{\succ}(x))\tilde{\alpha}(y),
\mlabel{eq:5.1}
\end{equation}\begin{equation}
\beta(x\star y)=(R_{\prec}(y)\otimes 1)\beta(x)+(1\otimes
L_{\vee}(x))\beta(y),
\mlabel{eq:5.2}
\end{equation}\begin{equation}(\beta+\tilde{\beta})(x\wedge
y)=(R_{\nwarrow}(y)\otimes 1)(\beta+\tilde{\beta})(x)+(1\otimes
L_{\wedge}(x))\tilde{\beta}(y), \mlabel{eq:5.3}\end{equation}
\begin{equation}(\alpha+\tilde{\alpha})(x\wedge
y)= (R_{\wedge}(y)\otimes 1)(\alpha+\tilde{\alpha})(x)+ (1\otimes
L_{\nearrow}(x))\tilde{\alpha}(y), \mlabel{eq:5.4}
\end{equation}\begin{equation}(\beta+\tilde{\beta})(x\vee y)=
(1\otimes L_{\vee}(x))(\beta+\tilde{\beta})(y)+
(R_{\swarrow}(y)\otimes
1)\beta(x),
\mlabel{eq:5.5}
\end{equation}\begin{equation}(\alpha+\tilde{\alpha})(x\vee
y)= (1\otimes L_{\searrow}(x))(\alpha+\tilde{\alpha})(y)+
(R_{\vee}(y)\otimes
1)\alpha(x),
\mlabel{eq:5.6}
\end{equation}\begin{equation}(\alpha+\beta)(x\succ
y)=(1\otimes L_{\searrow}(x))(\alpha+ \beta)(y)+(R_{\succ}(y)\otimes
1)\alpha(x),
\mlabel{eq:5.7}
\end{equation}\begin{equation}(\tilde{\alpha}+\tilde{\beta})(x\succ
y)= (1\otimes L_{\succ}(x))(\tilde{\alpha}+\tilde{\beta})(y)+
(R_{\nearrow}(y)\otimes
1)\tilde{\alpha}(x),
\mlabel{eq:5.8}
\end{equation}\begin{equation}(\alpha+\beta)(x\prec
y)=(R_{\prec}(y)\otimes 1)(\alpha+ \beta)(x)+(1\otimes
L_{\swarrow}(x))\beta(y),
\mlabel{eq:5.9}
\end{equation}\begin{equation}(\tilde{\alpha}+\tilde{\beta})(x\prec
y)= (R_{\nwarrow}(y)\otimes 1)(\tilde{\alpha}+\tilde{\beta})(x)+
(1\otimes
L_{\prec}(x))\tilde{\beta}(y),
\mlabel{eq:5.10}\end{equation}\begin{equation}(1\otimes
L_{\succ}(y)-R_{\wedge}(y)\otimes 1)\tau\beta(x)= (1\otimes
R_{\prec}(x)-L_{\vee}(x)\otimes 1)\tilde{\alpha}(y),
\mlabel{eq:5.11}
\end{equation}\begin{equation}(1\otimes
R_{\nwarrow}(x)-L_{\vee}(x)\otimes 1)(\alpha+
\tilde{\alpha})(y)=(1\otimes L_{\nearrow}(y))\tau\beta(x)-
(R_{\vee}(y)\otimes
1)\tau\tilde{\beta}(x),
\mlabel{eq:5.12}
\end{equation}\begin{equation}(R_{\wedge}(y)\otimes
1-1\otimes L_{\searrow}(y))(\tau\beta+
\tau\tilde{\beta})(x)=(L_{\wedge}(x)\otimes 1)\alpha(y)-(1\otimes
R_{\swarrow}(x))\tilde{\alpha}(y),
\mlabel{eq:5.13}
\end{equation}\begin{equation}(1\otimes
L_{\succ}(x)-R_{\nwarrow}(x)\otimes 1)(\tau\alpha+
\tau\beta)(y)=(1\otimes R_{\succ}(y))\tilde{\beta}(x)-
(L_{\swarrow}(y)\otimes
1)\tilde{\alpha}(x),
\mlabel{eq:5.14}
\end{equation}\begin{equation}(1\otimes
L_{\searrow}(x)- R_{\prec}(x)\otimes
1)(\tau\tilde{\alpha}+\tau\tilde{\beta})(y)= (1\otimes
R_{\nearrow}(y))\beta(x)- (L_{\prec}(y)\otimes
1)\alpha(x),
\mlabel{eq:5.15}
\end{equation}\begin{equation}(\alpha+\tilde{\alpha}+\beta+\tilde{\beta})(x\swarrow
y)= (R_{\swarrow}(y)\otimes 1)(\alpha+\beta)(x)+(1\otimes
L_{\swarrow}(x))(\beta+\tilde{\beta})(y),
\mlabel{eq:5.16}
\end{equation}\begin{equation}(\alpha+\beta+\tilde{\alpha}+\tilde{\beta})(x\nearrow
y)= (R_{\nearrow}(y)\otimes 1)(\alpha+\tilde{\alpha})(x)+(1\otimes
L_{\nearrow}(x))(\tilde{\alpha}+\tilde{\beta})(y),
\mlabel{eq:5.17}
\end{equation}\begin{equation}(L_{\swarrow}(y)\otimes
1)(\alpha+\tilde{\alpha})(x)+ (1\otimes
L_{\nearrow}(x))(\tau\alpha+\tau\beta)(y)= (R_{\swarrow}(x)\otimes
1)(\tau\tilde{\alpha}+ \tau\tilde{\beta})(y)+(1\otimes
R_{\nearrow}(y))(\beta+ \tilde{\beta})(x), \mlabel{eq:5.18}
\end{equation}
for all $x,y\in A$. \mlabel{pp:5.1}
\end{prop}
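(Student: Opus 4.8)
The plan is to reduce the assertion to Proposition 3.4 and then pass to the dual picture. By Corollary 2.9, the tuple $(A^{*},-R_{\swarrow}^{*},L_{\succ}^{*},R_{\prec}^{*},-L_{\nearrow}^{*})$ is a bimodule of the associated vertical dendriform dialgebra $A_v=(A,\wedge,\vee)$; applying the same corollary to the quadri-algebra $(A^{*},\nwarrow_{*},\nearrow_{*},\swarrow_{*},\searrow_{*})$ and identifying $(A^{*})^{*}$ with $A$, the tuple $(A,-R_{\swarrow_{*}}^{*},L_{\succ_{*}}^{*},R_{\prec_{*}}^{*},-L_{\nearrow_{*}}^{*})$ is a bimodule of $(A^{*})_v=(A^{*},\wedge_{*},\vee_{*})$. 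Thus the two bimodule hypotheses of Proposition 3.4 hold automatically, and the $10$-tuple in the statement is a matched pair of dendriform dialgebras if and only if the eighteen compatibility equations $(3.3)$--$(3.20)$ of Proposition 3.4 hold once we substitute $l_{\wedge_A}=-R_{\swarrow}^{*}$, $r_{\wedge_A}=L_{\succ}^{*}$, $l_{\vee_A}=R_{\prec}^{*}$, $r_{\vee_A}=-L_{\nearrow}^{*}$, $l_{\wedge_B}=-R_{\swarrow_{*}}^{*}$, $r_{\wedge_B}=L_{\succ_{*}}^{*}$, $l_{\vee_B}=R_{\prec_{*}}^{*}$, $r_{\vee_B}=-L_{\nearrow_{*}}^{*}$; by summing, one also gets $l_{\star_A}=R_{\nwarrow}^{*}$, $r_{\star_A}=L_{\searrow}^{*}$, $l_{\star_B}=R_{\nwarrow_{*}}^{*}$ and $r_{\star_B}=L_{\searrow_{*}}^{*}$.

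It then remains to translate each of $(3.3)$--$(3.20)$. Each is an identity of vectors, lying in $A$ for the equations built from the products of $A$ and in $A^{*}$ for those built from the products of $A^{*}$; I would pair both sides with an arbitrary element of the complementary space. Using $\langle R_{\circ}^{*}(x)a^{*},z\rangle=\langle a^{*},z\circ x\rangle$ and $\langle L_{\circ}^{*}(x)a^{*},z\rangle=\langle a^{*},x\circ z\rangle$, the corresponding identities for the starred operators, and the relations $\nwarrow_{*}=\alpha^{*}$, $\nearrow_{*}=\beta^{*}$, $\swarrow_{*}=\tilde{\alpha}^{*}$, $\searrow_{*}=\tilde{\beta}^{*}$ — so that $\langle a^{*}\wedge_{*}b^{*},z\rangle=\langle a^{*}\otimes b^{*},(\alpha+\beta)(z)\rangle$, $\langle a^{*}\vee_{*}b^{*},z\rangle=\langle a^{*}\otimes b^{*},(\tilde{\alpha}+\tilde{\beta})(z)\rangle$, $\langle a^{*}\succ_{*}b^{*},z\rangle=\langle a^{*}\otimes b^{*},(\beta+\tilde{\beta})(z)\rangle$, $\langle a^{*}\prec_{*}b^{*},z\rangle=\langle a^{*}\otimes b^{*},(\alpha+\tilde{\alpha})(z)\rangle$ and $\langle a^{*}\star_{*}b^{*},z\rangle=\langle a^{*}\otimes b^{*},(\alpha+\beta+\tilde{\alpha}+\tilde{\beta})(z)\rangle$ — every term is converted into the pairing of some $a^{*}\otimes b^{*}\in A^{*}\otimes A^{*}$ with an expression in $\alpha,\beta,\tilde{\alpha},\tilde{\beta}$, the multiplication operators of $A$, and the flip $\tau$. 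Since $a^{*}$ and $b^{*}$ range over all of $A^{*}$, the resulting scalar identity is equivalent to an identity in $A\otimes A$.

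Carrying this out equation by equation, one checks that, after relabelling $x$ and $y$, equation $(3.3)$ becomes $(5.1)$, equation $(3.4)$ becomes $(5.13)$, equation $(3.5)$ becomes $(5.4)$, equation $(3.12)$ becomes $(5.16)$, and similarly for the rest, so that $(3.3)$--$(3.20)$ are jointly equivalent to $(5.1)$--$(5.18)$; the converse direction is the same computation read backwards.

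The content of the argument is purely bookkeeping, and the one subtle ingredient is the flip $\tau$. It enters exactly in terms of the form $l_{\circ_B}(u)v$ or $r_{\circ_B}(u)v$ (with $\circ\in\{\wedge,\vee\}$) in which the inner argument $u$ already fills one tensor leg of a cooperation of $A$ while the outer action inserts a vector of $A$ into the remaining leg "from the other side"; dualizing such a term produces $\tau$ composed with a left- or right-multiplication operator acting on a single tensor factor, which is why the twisted relations $(5.11)$--$(5.15)$ and $(5.18)$ appear while the co-Leibniz relations $(5.1)$--$(5.10)$, $(5.16)$ and $(5.17)$ do not. The remaining thing to watch is sign-tracking: the minus signs in $-R_{\swarrow}^{*}$, $-L_{\nearrow}^{*}$, $-R_{\swarrow_{*}}^{*}$ and $-L_{\nearrow_{*}}^{*}$ must be combined correctly across every term, and this is the only place where the computation can go astray.
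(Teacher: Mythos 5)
Your proposal is correct and takes essentially the same route as the paper: reduce to the compatibility equations (3.3)--(3.20) of Proposition 3.4 under the substitutions $l_{\wedge_A}=-R_{\swarrow}^*$, $r_{\wedge_A}=L_{\succ}^*$, $l_{\vee_A}=R_{\prec}^*$, $r_{\vee_A}=-L_{\nearrow}^*$ (and their starred analogues), with the bimodule hypotheses supplied by Corollary 2.9, and then dualize each equation by pairing with an arbitrary element of the complementary space via the dictionary $\wedge_*=\alpha^*+\beta^*$, $\vee_*=\tilde{\alpha}^*+\tilde{\beta}^*$, etc. Your sample correspondences (for instance $(3.5)\leftrightarrow(5.4)$, which is exactly the case the paper computes explicitly, and $(3.3)\leftrightarrow(5.1)$, $(3.4)\leftrightarrow(5.13)$, $(3.12)\leftrightarrow(5.16)$) and your account of how the flip $\tau$ arises match the paper's argument, which likewise verifies one case in detail and declares the remaining ones similar.
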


\begin{proof} From Proposition~\ref{pp:3.5}, we only need to prove that each of Eqs.~(\meqref{eq:5.1})-(\meqref{eq:5.18}) is equivalent to the corresponding one in Eqs.~(\meqref{eq:3.3})-(\meqref{eq:3.20}) in taking $A=A_v,B=(A^*)_v$ and
$$l_{\wedge_A}=-R_{\swarrow}^*,r_{\wedge_A}=L_{\succ}^*,
l_{\vee_A}=R_{\prec}^*,r_{\vee_A}=-L_{\nearrow}^*,
l_{\wedge_B}=-R_{\swarrow_*}^*,r_{\wedge_B}=L_{\succ_*}^*,
l_{\vee_B}=R_{\prec_*}^*,r_{\vee_B}=-L_{\nearrow_*}^*.$$ As an
example, we give an explicit proof that
\begin{equation} L_{\succ_*}^*(a^*)(x\wedge y)-
L_{\succ_*}^*(R_{\nwarrow}^*(y)a^*)x-x\wedge(L_{\searrow_*}^*(a^*)y)=0,
\mlabel{eq:5.19}
\end{equation}
holds if and only if Eq.~(\mref{eq:5.3}) holds. The other
equivalences are similar. In fact, let the left hand side of
Eq.~(\meqref{eq:5.19}) act on an arbitrary element $b^*\in A^*$.
Then we have
\begin{eqnarray*}
& &\langle L_{\succ_*}^*(a^*)(x\wedge y)-
L_{\succ_*}^*(R_{\nwarrow}^*(y)a^*)x-x\wedge(L_{\searrow_*}^*(a^*)y),b^*\rangle \\
&=&\langle x\wedge y, a^*\succ_*b^*\rangle - \langle
x,(R_{\nwarrow}^*(y)a^*)\succ_*b^*\rangle -\langle y,
 a^*\searrow_*(L_{\wedge}^*(x)b^*)\rangle \\
 &=&\langle (\beta+\tilde{\beta})(x\wedge y)-
 (R_{\nwarrow}(y)\otimes 1)(\beta+\tilde{\beta})(x)-
 (1\otimes L_{\wedge}(x))\tilde{\beta}(y),a^*\otimes b^*\rangle .
\end{eqnarray*}
So the conclusion follows.
\end{proof}

\begin{defn}
\begin{enumerate}
\item Let $(A,\alpha,\beta,\tilde{\alpha},\tilde{\beta})$ be a
vector space with four comultiplications
$\alpha,\beta,\tilde{\alpha},\tilde{\beta}: A\rightarrow A\otimes
A$. Let the dual operations be
$\alpha^*,\beta^*,\tilde{\alpha}^*,\tilde{\beta}^*:A^*\otimes
A^*\subset(A\otimes A)^*\rightarrow A^*$. If
$(A^*,\alpha^*,\beta^*,\tilde{\alpha}^*,\tilde{\beta}^*)$ becomes
a quadri-algebra, then we call
$(A,\alpha,\beta,\tilde{\alpha},\tilde{\beta})$ a {\bf
quadri-coalgebra}. A {\bf homomorphism} between two
quadri-coalgebras is defined as a linear map (between the two
underlying vector spaces) which preserves the corresponding
cooperations. \item Let
$(A,\nwarrow,\nearrow,\swarrow,\searrow,\alpha,\beta,\tilde{\alpha},\tilde{\beta})$
be a quadri-algebra equipped with four cooperations
$\alpha,\beta,\tilde{\alpha}$, $\tilde{\beta}:A\rightarrow
A\otimes A$ such that
$(A,\alpha,\beta,\tilde{\alpha},\tilde{\beta})$ is a
quadri-coalgebra and $\alpha,\beta,\tilde{\alpha}$ and
$\tilde{\beta}$ satisfy Eqs.~(\ref{eq:5.1})-(\ref{eq:5.18}). Then
$(A,\nwarrow,\nearrow,\swarrow,\searrow,
\alpha,\beta,\tilde{\alpha},\tilde{\beta})$ is called a {\bf
quadri-bialgebra}. A {\bf homomorphism} between two
quadri-bialgebras is defined as a linear map (between the two
underlying vector spaces) which is a homomorphisms of both
quadri-algebras and quadri-coalgebras.
\end{enumerate}
\mlabel{de:quadbial}
\end{defn}

Combining Proposition~\mref{pp:5.1} and the discussion in the previous sections, we have the following conclusion:

\begin{theorem}  Let
$(A,\nwarrow_A,\nearrow_A,\swarrow_A,\searrow_A,\alpha,\beta,
\tilde{\alpha},\tilde{\beta})$ be a quadri-algebra with four
comultiplications
$\alpha,\beta,\tilde{\alpha},\tilde{\beta}:A\rightarrow A\otimes
A$, such that $(A,\alpha,\beta,\tilde{\alpha},\tilde{\beta})$ is a
quadri-coalgebra. Then, with the notations
$\nwarrow_B:=\alpha^*,\nearrow_B:=\beta^*,
\swarrow_B:=\tilde{\alpha}^*,\searrow_B:=\tilde{\beta}^*$, and
with $\mathfrak{B}_S$ given by Eq.~(\meqref{eq:3.2}), the
following conditions are equivalent

\begin{enumerate}
\item $(A_v\bowtie(A^*)_v,A_v,(A^*)_v, \mathfrak{B}_S)$ is a
standard Manin triple of dendriform algebras associated to the
nondegenerate 2-cocycle $\frak B_S$; \mlabel{it:5.3a} \item
$(A\bowtie A^*,A,A^*,\mathfrak{B}_S)$ is a standard Manin triple
of quadri-algebras associated to the nondegenerate invariant
bilinear form $\frak B_S$. \mlabel{it:5.3b} \item
$(A_v,(A^*)_v,-R_{\swarrow_A}^*,L_{\succ_A}^*,R_{\prec_A}^*,
-L_{\nearrow_A}^*,-R_{\swarrow_{A^*}}^*,L_{\succ_{A^*}}^*,R_{\prec_{A^*}}^*,
-L_{\nearrow_{A^*}}^*)$ is a matched pair of dendriform algebras;
\mlabel{it:5.3c} \item
$(A,A^*,R_{\searrow_A}^*,L_{\star_A}^*,-R_{\vee_A}^*,
-L_{\prec_A}^*,-R_{\succ_A}^*,-L_{\wedge_A}^*,R_{\star_A}^*,
L_{\nwarrow_A}^*,R_{\searrow_{A^*}}^*,L_{\star_{A^*}}^*,-R_{\vee_{A^*}}^*,
-L_{\prec_{A^*}}^*$, $-R_{\succ_{A^*}}^*$, $-L_{\wedge_{A^*}}^*$,
$R_{\star_{A^*}}^*, L_{\nwarrow_{A^*}}^*)$ is a matched pair of
quadri-algebras; \mlabel{it:5.3d} \item
$(A,\nwarrow_A,\nearrow_A,\swarrow_A,
\searrow_A,\alpha,\beta,\tilde{\alpha},\tilde{\beta})$ is a
quadri-bialgebra. \mlabel{it:5.3e}
\end{enumerate}
\mlabel{thm:5.3}
\end{theorem}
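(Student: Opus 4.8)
The plan is to prove Theorem 5.4 by assembling the chain of equivalences already established in the preceding sections, so that almost no new computation is required. First I would observe that the equivalence $(1)\Leftrightarrow(2)$ is exactly Corollary 4.11, since $A_v$ and $(A^*)_v$ are precisely the associated vertical dendriform dialgebras of the quadri-algebras $A$ and $A^*$ (with $\nwarrow_B=\alpha^*$, etc.), and the bilinear form $\mathfrak{B}_S$ is the same in both settings. Next, $(1)\Leftrightarrow(3)$ follows directly from Proposition 3.5, applied with the quadri-algebra structure on $A^*$ given by $\nwarrow_B,\nearrow_B,\swarrow_B,\searrow_B$; indeed Proposition 3.5 is stated precisely as the equivalence between the Manin triple condition on $A_v\oplus(A^*)_v$ and the matched pair condition for the specified eight linear maps. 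Then $(3)\Leftrightarrow(4)$ is Proposition 4.8, again with the quadri-algebra structure on $A^*$ coming from the four cooperations.

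The remaining link is $(3)\Leftrightarrow(5)$, which is the genuinely new content and is supplied by Proposition 5.1: the matched pair condition in $(3)$ unwinds, via Proposition 3.4, into the eighteen identities (3.3)--(3.20) for the maps $l_{\wedge_A}=-R_{\swarrow}^*$, $r_{\wedge_A}=L_{\succ}^*$, $l_{\vee_A}=R_{\prec}^*$, $r_{\vee_A}=-L_{\nearrow}^*$ and their $B$-counterparts, and Proposition 5.1 asserts that these are equivalent to Eqs. (5.1)--(5.18), which is exactly the defining condition of a quadri-bialgebra in Definition 5.3(2) once we also know $(A,\alpha,\beta,\tilde{\alpha},\tilde{\beta})$ is a quadri-coalgebra — which is a hypothesis of the theorem. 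So the proof is: cite Corollary 4.11 for $(1)\Leftrightarrow(2)$, Proposition 3.5 for $(1)\Leftrightarrow(3)$, Proposition 4.8 for $(3)\Leftrightarrow(4)$, and Proposition 5.1 together with Definition 5.3 for $(3)\Leftrightarrow(5)$.

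The only point requiring a little care is bookkeeping of which dendriform/quadri-algebra plays the role of which in each cited result, and checking that the hypotheses line up: in particular one must note that the hypothesis ``$\alpha^*,\beta^*,\tilde{\alpha}^*,\tilde{\beta}^*$ induce a quadri-algebra structure on $A^*$'' in Proposition 5.1 is implied by ``$(A,\alpha,\beta,\tilde{\alpha},\tilde{\beta})$ is a quadri-coalgebra'' in the theorem, and that the identifications $\nwarrow_B=\alpha^*$ etc. make the matched-pair data in $(3)$ and $(4)$ coincide with those in Propositions 3.5 and 4.8 respectively. I expect the main (modest) obstacle to be purely organizational: stating the cycle of implications cleanly rather than as a web, e.g. by proving $(1)\Leftrightarrow(2)$, $(1)\Leftrightarrow(3)$, $(3)\Leftrightarrow(4)$, $(3)\Leftrightarrow(5)$ separately, so that all five conditions are seen to be mutually equivalent. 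No substantive new calculation is needed beyond what Proposition 5.1 already provides.
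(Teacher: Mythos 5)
Your proposal is correct and is essentially the paper's own proof: the theorem there is stated as "combining Proposition 5.1 and the discussion in the previous sections," i.e.\ Proposition 3.5 for $(1)\Leftrightarrow(3)$, Proposition 4.6 for $(3)\Leftrightarrow(4)$, Corollary 4.10 for $(1)\Leftrightarrow(2)$, and Proposition 5.1 with Definition 5.2 for $(3)\Leftrightarrow(5)$. Your only slips are citation numbers (you wrote Corollary 4.11, Proposition 4.8, and Definition 5.3 for what are Corollary 4.10, Proposition 4.6, and Definition 5.2 in the paper), but the results you describe are the right ones.
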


By a standard argument (cf. \cite{Bai2}, Proposition 2.2.10), we get the following result:

\begin{prop} Two Manin triples of dendriform
algebras associated to a nondegenerate 2-cocycle are isomorphic if
and only if their corresponding quadri-bialgebras are isomorphic.
\mlabel{pp:5.4}
\end{prop}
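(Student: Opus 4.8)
The plan is to turn the correspondence of the preceding theorem, between a quadri-bialgebra and a Manin triple of dendriform dialgebras, into a bijection that is natural with respect to isomorphisms; this is the dendriform/quadri analogue of the classical fact that two Lie bialgebras are isomorphic if and only if their Drinfeld doubles are. By Proposition 3.3 we may assume both Manin triples are standard, say $(A_v\bowtie(A^*)_v,A_v,(A^*)_v,\mathfrak{B}_S)$ and $(B_v\bowtie(B^*)_v,B_v,(B^*)_v,\mathfrak{B}_S)$, arising from quadri-bialgebras $A$ and $B$ with comultiplications $\alpha_A,\beta_A,\tilde\alpha_A,\tilde\beta_A$ and $\alpha_B,\beta_B,\tilde\alpha_B,\tilde\beta_B$ as in that theorem; write $\nwarrow_{A^*}=\alpha_A^*,\nearrow_{A^*}=\beta_A^*$, and so on. I then construct explicit mutually inverse maps between isomorphisms of the two Manin triples and isomorphisms of the two quadri-bialgebras.

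For the ``only if'' direction, let $\varphi$ be an isomorphism of the two Manin triples. Since everything is finite-dimensional, $\varphi$ restricts to dendriform isomorphisms $\psi:=\varphi|_A\colon A_v\to B_v$ and $\varphi|_{A^*}\colon(A^*)_v\to(B^*)_v$. First, $\varphi$ is a $\mathfrak{B}_S$-preserving isomorphism of the ambient dendriform dialgebras, and by Corollary 4.10 together with Proposition 4.8 and Proposition 2.11 the quadri-algebra structures on $A\bowtie A^*$ and $B\bowtie B^*$ are produced from these vertical dendriform structures and $\mathfrak{B}_S$ by Eqs. (2.14)-(2.15); hence $\varphi$ is automatically an isomorphism of quadri-algebras $A\bowtie A^*\to B\bowtie B^*$, and restricting to the quadri-subalgebra $A$ shows $\psi$ is a quadri-algebra isomorphism. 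Second, the compatibility $\langle a^*,x\rangle=\langle\varphi(a^*),\psi(x)\rangle$ for $x\in A$, $a^*\in A^*$ forces $\varphi|_{A^*}=(\psi^{-1})^*$; since $\varphi|_{A^*}$ is a quadri-algebra homomorphism $(A^*,\nwarrow_{A^*},\dots)\to(B^*,\nwarrow_{B^*},\dots)$, dualizing the relation $(\psi^{-1})^*\circ\nwarrow_{A^*}=\nwarrow_{B^*}\circ((\psi^{-1})^*\otimes(\psi^{-1})^*)$ and its three analogues yields $(\psi\otimes\psi)\circ\alpha_A=\alpha_B\circ\psi$, $(\psi\otimes\psi)\circ\beta_A=\beta_B\circ\psi$, and so on, so that $\psi$ is also a quadri-coalgebra isomorphism. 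Thus $\psi$ is an isomorphism of quadri-bialgebras.

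For the ``if'' direction, let $\psi\colon A\to B$ be a quadri-bialgebra isomorphism and put $\varphi:=\psi\oplus(\psi^{-1})^*$ on $A\oplus A^*$. It maps $A_v$ onto $B_v$ and $(A^*)_v$ onto $(B^*)_v$, and a one-line computation gives $\mathfrak{B}_S(\varphi(x+a^*),\varphi(y+b^*))=\langle a^*,y\rangle+\langle x,b^*\rangle=\mathfrak{B}_S(x+a^*,y+b^*)$. It remains to check that $\varphi$ is a homomorphism of the bowtie dendriform dialgebras. On the summand $A$ this is the vertical dendriform homomorphism property of the quadri-algebra map $\psi$; on $A^*$ it follows because $\psi$ being a quadri-coalgebra morphism makes $(\psi^{-1})^*$ a quadri-algebra, hence vertical dendriform, isomorphism $A^*\to B^*$; and on the cross terms it follows from the explicit formulas obtained in the proof of Proposition 3.5 (for instance $x\wedge a^*=-R_{\swarrow_A}^*(x)a^*+L_{\succ_{A^*}}^*(a^*)x$, together with the companion formulas for $a^*\wedge x$, $x\vee a^*$, $a^*\vee x$): $\psi$ intertwines the multiplication operators $R_{\swarrow_A},L_{\succ_A},R_{\prec_A},L_{\nearrow_A}$ of $A$ with the corresponding operators of $B$, $(\psi^{-1})^*$ intertwines those of $A^*$ with those of $B^*$, and naturality of the dual map then makes $\varphi$ respect all four cross operations. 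Hence $\varphi$ is an isomorphism of Manin triples, and one checks immediately that $\varphi\mapsto\varphi|_A$ and $\psi\mapsto\psi\oplus(\psi^{-1})^*$ are mutually inverse.

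The only genuine work lies in the two bookkeeping facts just used, which is exactly the content of the ``standard proof'' of \cite{Bai2}: one must verify that the quadri-algebra structure of a Manin triple of quadri-algebras is truly \emph{determined} by its vertical dendriform structure together with $\mathfrak{B}_S$ (so that a $\mathfrak{B}_S$-preserving dendriform isomorphism lifts to a quadri-algebra isomorphism), and that every structure map of the matched pair in conditions (3) and (4) of the preceding theorem is built \emph{functorially} from the quadri-algebra operations of $A$ and of $A^*$ and the canonical pairing, so that it transports along $\psi$ and $(\psi^{-1})^*$. Both are routine consequences of Corollary 4.10, Proposition 4.8 and Proposition 3.5, so no essentially new difficulty is expected.
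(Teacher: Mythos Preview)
Your argument is correct and is precisely the ``standard proof'' the paper defers to: the paper itself gives no details beyond the citation to \cite{Bai2}, Proposition 2.2.10, and what you have written is exactly that argument specialized to the present setting. The two key ingredients you isolate --- that the quadri-algebra structure on the double is determined by the vertical dendriform structure together with $\mathfrak{B}_S$ via Eqs.~(2.14)--(2.15), and that all structure maps in the matched pair are functorial in $\psi$ and $(\psi^{-1})^*$ --- are the right ones, and your use of Proposition~3.3, Proposition~2.11, Corollary~4.10 and the explicit cross-term formulas from the proof of Proposition~3.5 is sound.
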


\begin{remark}
{\rm It is obvious that for a quadri-bialgebra
$(A,\nwarrow,\nearrow,\swarrow,
\searrow,\alpha,\beta,\tilde{\alpha},\tilde{\beta})$, the dual
$(A^*$, $\alpha^*$, $\beta^*$,
$\tilde{\alpha}^*,\tilde{\beta}^*,\gamma,\delta, \tilde{\gamma}$,
$\tilde{\delta})$ is also a quadri-bialgebra, where
$\gamma^*=\nwarrow, \delta^*=\nearrow,
 \tilde{\gamma}^*=\swarrow$, $\tilde{\delta}^*=\searrow$. } \mlabel{rk:5.5}
\end{remark}

\section{Coboundary quadri-bialgebras}
\mlabel{sec:cob}

We now give a more detailed study of a class of quadri-bialgebras
with certain coboundary conditions which can be obtained as
solutions of a system of equations.

\begin{defn}{\rm
 A quadri-bialgebra
$(A,\nwarrow,\nearrow,\swarrow,\searrow,\alpha,\beta,
\tilde{\alpha},\tilde{\beta})$ is called {\bf coboundary} if
$\alpha,\beta,\tilde{\alpha}$ and $\tilde{\beta}$ satisfy
\begin{eqnarray}
\alpha(x)&=&(-1\otimes L_{\searrow}(x)+R_{\star}(x)\otimes
1)r_{\nwarrow}, \mlabel{eq:6.1}\\ 
\beta(x)&=&(1\otimes L_{\vee}(x)- R_{\prec}(x)\otimes
1)r_{\nearrow}, \mlabel{eq:6.2}\\
\tilde{\alpha}(x)&=&(1\otimes L_{\succ}(x)- R_{\wedge}(x)\otimes
1)r_{\swarrow}, \mlabel{eq:6.3}\\
\tilde{\beta}(x)&=&(-1\otimes L_{\star}(x)+ R_{\nwarrow}(x)\otimes
1)r_{\searrow}, \mlabel{eq:6.4}
\end{eqnarray}
where $r_{\nwarrow},r_{\nearrow},r_{\swarrow},r_{\searrow}\in
A\otimes A$ and $x\in A$.}
\mlabel{de:6.1}
\end{defn}

\begin{prop} Let $(A,\nwarrow,\nearrow,\swarrow,\searrow,\alpha,\beta,
\tilde{\alpha},\tilde{\beta})$ be a quadri-algebra with
 four comultiplications $\alpha,\beta,
\tilde{\alpha},\tilde{\beta}$ defined by
Eqs.~(\ref{eq:6.1})-(\ref{eq:6.4})
 respectively. If $r_{\nwarrow}=r_{\nearrow}=r_{\swarrow}=
 r_{\searrow}=r\in A\otimes A$ and
  $r$ is skew-symmetric, then $\alpha,\beta,
\tilde{\alpha}$ and $\tilde{\beta}$ satisfy
Eqs.~(\meqref{eq:5.1})-(\meqref{eq:5.18}). \mlabel{pp:6.2}
\end{prop}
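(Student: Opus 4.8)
The plan is to substitute the coboundary expressions (6.1)--(6.4), with the common skew-symmetric element $r=\sum_i a_i\otimes b_i\in A\otimes A$, into each of Eqs. (5.1)--(5.18) and to expand both sides as linear operators applied to $r$. Two elementary observations keep the computation under control. First, operators acting on distinct tensor factors of $A\otimes A$ commute, so a composite such as $(P\otimes1)(1\otimes Q)$ always equals $P\otimes Q$; hence after expansion each side of a given equation breaks into a ``first-slot'' part $(\,\cdot\,\otimes1)r$, a ``second-slot'' part $(1\otimes\,\cdot\,)r$, and ``mixed'' parts $(\,\cdot\,\otimes\,\cdot\,)r$. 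Second, skew-symmetry of $r$ means $\tau r=-r$, which is exactly what lets us strip the flip off the terms $\tau\alpha(x),\tau\beta(x),\tau\tilde\alpha(x),\tau\tilde\beta(x)$ appearing in (5.11)--(5.15) and (5.18), e.g.\ $\tau\beta(x)=-(L_{\vee}(x)\otimes1-1\otimes R_{\prec}(x))r$.

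For the ``derivation-type'' equations (5.1)--(5.10), (5.16), (5.17), which have the shape $\phi(x\circ y)=(P(y)\otimes1)\phi(x)+(1\otimes Q(x))\phi(y)$ with $\phi$ one of the $\pm$-combinations of $\alpha,\beta,\tilde\alpha,\tilde\beta$, each relevant $\phi(z)$ is by (6.1)--(6.4) a combination of one left-multiplication applied in the second factor and one right-multiplication applied in the first. The coefficients $P(y)\otimes1$ and $1\otimes Q(x)$ are matched to these so that the mixed parts cancel in pairs on the right-hand side, and the equation collapses to an identity $\ell(x\circ y)=\ell(x')\ell(y')$ in the second slot and $R(x\circ y)=R(y')R(x')$ in the first slot, for the appropriate pieces $x',y'$ of $x,y$. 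For instance (5.1) reduces to $L_{\succ}(x\star y)=L_{\succ}(x)L_{\succ}(y)$ together with $R_{\wedge}(x\star y)=R_{\wedge}(y)R_{\wedge}(x)$, and (5.3) to $L_{\nearrow}(x\wedge y)=L_{\nearrow}(x)L_{\succ}(y)$ together with $R_{\vee}(x\wedge y)=R_{\wedge}(y)R_{\vee}(x)$; these are instances of the quadri-algebra axioms (2.7)--(2.9) rewritten via the left and right multiplications, or of the dendriform identities of the associated dialgebras $A_h,A_v$ (Proposition 2.8), once one takes into account the decompositions (2.10)--(2.12) of $\succ,\prec,\vee,\wedge,\star$. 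No use of skew-symmetry is needed here.

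For the ``$\tau$-type'' equations (5.11)--(5.15), (5.18) one first removes every flip using $\tau r=-r$; the equation then likewise becomes a composite of single-slot operators applied to $r$, the pure first- and second-slot contributions coincide at once, and the mixed contributions coincide after commuting single-slot operators past each other, leaving a pair of slotwise identities that again lie among (2.7)--(2.9) or the dendriform identities of $A_h,A_v$. For example, after cancellation (5.11) becomes precisely the dendriform commutation relations $R_{\prec}L_{\succ}=L_{\succ}R_{\prec}$ in $A_h$ and $R_{\wedge}L_{\vee}=L_{\vee}R_{\wedge}$ in $A_v$. In the writeup I would carry out one representative case of each family in full --- say (5.1) and (5.11) --- and note that the remaining sixteen are handled the same way.

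The main obstacle is the organizational bookkeeping: there are eighteen equations, and in the derivation-type cases one must correctly expand composite arguments such as $L_{\searrow}(x\swarrow y)$, $R_{\wedge}(x\vee y)$ or $L_{\nwarrow}(x\succ y)$, match each with the right one of the nine axioms, and keep exact track both of how the four operations and their sums are distributed over the two tensor slots and of the signs coming from (6.1)--(6.4) together with $\tau r=-r$. Conceptually, one could instead recognize (5.1)--(5.18) as a $1$-cocycle condition and (6.1)--(6.4) as the corresponding coboundary, so that the statement reduces to the tautology that a coboundary is a cocycle; but since that cohomological apparatus is not developed here, the direct computation sketched above is the efficient route.
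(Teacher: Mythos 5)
Your proposal is correct: direct substitution of (6.1)--(6.4) into (5.1)--(5.18), cancellation of the mixed tensor-slot terms, and reduction to the quadri-algebra and dendriform axioms (with $\tau r=-r$ used only in the $\tau$-type equations) is exactly the verification the paper has in mind, which it dismisses with the single word ``Straightforward.'' Your representative cases (5.1) and (5.11) reduce to the identities you state, so the plan fills in precisely the routine bookkeeping the paper omits.
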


\begin{proof}
 Straightforward. \end{proof}

\begin{lemma} Let $A$ be a vector space and
$\alpha,\beta,\tilde{\alpha},\tilde{\beta}:A\rightarrow A\otimes
A$ be four cooperations. Then
$(A,\alpha,\beta,\tilde{\alpha},\tilde{\beta})$ is a
quadri-coalgebra if and only if the linear maps $R_i:A\rightarrow
A\otimes A\otimes A$ $(i\in \{1,...,9\})$ defined by the following
equations are zero: \begin{eqnarray}R_1(x):&=&(\alpha\otimes
1)\alpha(x)-(1\otimes(\alpha+\beta+\tilde{\alpha}+\tilde{\beta}))\alpha(x),
\mlabel{eq:6.5}\\
R_2(x):&=&(\beta\otimes
1)\alpha(x)-(1\otimes(\alpha+\tilde{\alpha}))\beta(x),
\mlabel{eq:6.6}\\
R_3(x):&=&((\alpha+\beta)\otimes
1)\beta(x)-(1\otimes(\beta+\tilde{\beta}))\beta(x),
\mlabel{eq:6.7}\\
R_4(x):&=&(\tilde{\alpha}\otimes
1)\alpha(x)-(1\otimes(\alpha+\beta))\tilde{\alpha}(x),
\mlabel{eq:6.8}\\
R_5(x):&=&(\tilde{\beta}\otimes
1)\alpha(x)-(1\otimes\alpha)\tilde{\beta}(x), \mlabel{eq:6.9}\\
R_6(x):&=&((\tilde{\alpha}+\tilde{\beta})\otimes
1)\beta(x)-(1\otimes\beta)\tilde{\beta}(x), \mlabel{eq:6.10}\\
R_7(x):&=&((\alpha+\tilde{\alpha})\otimes
1)\tilde{\alpha}(x)-(1\otimes(\tilde{\alpha}+
\tilde{\beta}))\tilde{\alpha}(x), \mlabel{eq:6.11}\\
R_8(x):&=&((\beta+\tilde{\beta})\otimes
1)\tilde{\alpha}(x)-(1\otimes\tilde{\alpha})\tilde{\beta}(x),
\mlabel{eq:6.12}\\
R_9(x):&=&((\alpha+\beta+\tilde{\alpha}+\tilde{\beta})\otimes1)\tilde{\beta}(x)-
(1\otimes\tilde{\beta})\tilde{\beta}(x),\mlabel{eq:6.13}
\end{eqnarray}
for all $x\in A$.
\mlabel{lem:6.3}
\end{lemma}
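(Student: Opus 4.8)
The plan is to reduce everything to Definition 5.2(1): by definition $(A,\alpha,\beta,\tilde{\alpha},\tilde{\beta})$ is a quadri-coalgebra exactly when the four operations $\nwarrow_*:=\alpha^*$, $\nearrow_*:=\beta^*$, $\swarrow_*:=\tilde{\alpha}^*$, $\searrow_*:=\tilde{\beta}^*$ on $A^*$ satisfy the nine quadri-algebra axioms (2.7)--(2.9). So I would transpose each of those nine identities on $A^*$ back to an identity of maps $A\rightarrow A\otimes A\otimes A$ and check that the $i$-th one is precisely $R_i(x)=0$.

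First I would record the bookkeeping. Since all spaces are finite-dimensional, $A^{**}=A$, $(A\otimes A)^*=A^*\otimes A^*$ (so the inclusion in Definition 5.2 is an equality), $(A^{\otimes 3})^*=(A^*)^{\otimes 3}$, and the transpose of an operation $\mu:A^*\otimes A^*\rightarrow A^*$ is a cooperation $\mu^*:A\rightarrow A\otimes A$ with $\mu^{**}=\mu$. Transposition is linear, so the analogues of (2.10)--(2.12) on $A^*$ give the dictionary $(\prec_*)^*=\alpha+\tilde{\alpha}$, $(\succ_*)^*=\beta+\tilde{\beta}$, $(\wedge_*)^*=\alpha+\beta$, $(\vee_*)^*=\tilde{\alpha}+\tilde{\beta}$, $(\star_*)^*=\alpha+\beta+\tilde{\alpha}+\tilde{\beta}$. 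The two composition rules I would use repeatedly are $\big(\mu\circ(\nu\otimes 1)\big)^*=(\nu^*\otimes 1)\circ\mu^*$ and $\big(\mu\circ(1\otimes\nu)\big)^*=(1\otimes\nu^*)\circ\mu^*$, the order reversal being the point to watch.

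Then I would go axiom by axiom. For instance, axiom 1, $(a^*\nwarrow_* b^*)\nwarrow_* c^*=a^*\nwarrow_*(b^*\star_* c^*)$, is the equality $\nwarrow_*\circ(\nwarrow_*\otimes 1)=\nwarrow_*\circ(1\otimes\star_*)$ of maps $(A^*)^{\otimes 3}\rightarrow A^*$; transposing gives $(\alpha\otimes 1)\alpha=(1\otimes(\alpha+\beta+\tilde{\alpha}+\tilde{\beta}))\alpha$, which is $R_1=0$. Likewise axiom 3, $(a^*\wedge_* b^*)\nearrow_* c^*=a^*\nearrow_*(b^*\succ_* c^*)$, transposes to $((\alpha+\beta)\otimes 1)\beta=(1\otimes(\beta+\tilde{\beta}))\beta$, i.e. $R_3=0$, and axiom 9 transposes to $((\alpha+\beta+\tilde{\alpha}+\tilde{\beta})\otimes 1)\tilde{\beta}=(1\otimes\tilde{\beta})\tilde{\beta}$, i.e. $R_9=0$; the remaining axioms 2, 4, 5, 6, 7, 8 are handled identically and yield $R_2,R_4,R_5,R_6,R_7,R_8$ respectively. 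Since transposition is a bijection, each axiom on $A^*$ holds if and only if the corresponding $R_i$ vanishes, which is the asserted equivalence.

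I do not expect a genuine obstacle: the content is a bijective translation via finite-dimensional duality. The only thing that requires care is the nine-fold bookkeeping — keeping straight the order reversal under transposition and which tensor factor each cooperation lands in — so in the write-up I would establish the dictionary $(\prec_*)^*,\dots,(\star_*)^*$ once, carry out one or two cases explicitly as above, and leave the rest to the reader.
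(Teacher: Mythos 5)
Your proposal is correct and is exactly the argument the paper intends by its one-line proof ("It follows from the definition of a quadri-algebra"): dualize the nine quadri-algebra axioms for $(A^*,\alpha^*,\beta^*,\tilde{\alpha}^*,\tilde{\beta}^*)$, using finite-dimensionality and the dictionary $(\prec_*)^*=\alpha+\tilde{\alpha}$, $(\succ_*)^*=\beta+\tilde{\beta}$, $(\wedge_*)^*=\alpha+\beta$, $(\vee_*)^*=\tilde{\alpha}+\tilde{\beta}$, $(\star_*)^*=\alpha+\beta+\tilde{\alpha}+\tilde{\beta}$, so that the $i$-th axiom transposes precisely to $R_i=0$. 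Your bookkeeping (including the order reversal under transposition) checks out against all nine $R_i$'s.
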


\begin{proof} It follows from the definition of a
quadri-algebra.\end{proof}

\begin{defn}{\rm  Let
$(A,\nwarrow,\nearrow,\swarrow,\searrow)$ be a quadri-algebra and
let $r\in A\otimes A$. The following equations are called the {\bf
$Q_i^j$-equations} for $i=1,2,3$ and $j=1,2$:
\begin{equation}Q_1^1:=r_{23}\wedge r_{12}-r_{13}\succ r_{23}+
r_{12}\swarrow
r_{13}=0,
\mlabel{eq:6.14}
\end{equation}\begin{equation}Q_1^2:=r_{23}\vee
r_{12}-r_{12}\prec r_{13}+ r_{13}\nearrow
r_{23}=0,
\mlabel{eq:6.15}
\end{equation}\begin{equation}Q_2^1:=r_{12}\wedge
r_{13}-r_{23}\succ r_{12}- r_{13}\swarrow
r_{23}=0,\mlabel{eq:6.16}
\end{equation}\begin{equation}Q_2^2:=r_{12}\vee
r_{13}+r_{13}\prec r_{23}+ r_{23}\nearrow
r_{12}=0,\mlabel{eq:6.17}
\end{equation}\begin{equation}Q_3^1:=r_{13}\wedge
r_{23}+r_{12}\succ r_{13}+ r_{23}\swarrow
r_{12}=0,\mlabel{eq:6.18}
\end{equation}\begin{equation}Q_3^2:=r_{13}\vee
r_{23}-r_{23}\prec r_{12}- r_{12}\nearrow r_{13}=0.\mlabel{eq:6.19}
\end{equation}
}
\mlabel{de:6.4}
\end{defn}

\begin{prop} Let $(A,\nwarrow,\nearrow,\swarrow,\searrow)$ be a
quadri-algebra and let $r\in A\otimes A$. Define
$\alpha,\beta,\tilde{\alpha}$ and $\tilde{\beta}$ by Eqs.~
(\ref{eq:6.1})-(\ref{eq:6.4}) respectively, where
$r_{\nwarrow}=r_{\nearrow}=r_{\swarrow}=r_{\searrow}=r$. Then
$(A,\alpha,\beta,\tilde{\alpha},\tilde{\beta})$ becomes a
quadri-coalgebra if and only if the following equations hold (for
all $x\in A$):
\begin{equation}(1\otimes 1\otimes L_{\searrow}(x)-
R_{\star}(x)\otimes 1\otimes
1)(Q_1^2-Q_3^1)=0,\mlabel{eq:6.20}
\end{equation}\begin{equation}(1\otimes 1\otimes
L_{\searrow}(x)- R_{\prec}(x)\otimes 1\otimes
1)Q_1^2=0,\mlabel{eq:6.21}\end{equation}\begin{equation}(1\otimes 1\otimes
L_{\vee}(x)- R_{\prec}(x)\otimes 1\otimes
1)Q_3^1=0,\mlabel{eq:6.22}
\end{equation}\begin{equation}(1\otimes 1\otimes
L_{\searrow}(x)- R_{\wedge}(x)\otimes 1\otimes
1)Q_2^1=0,\mlabel{eq:6.23}
\end{equation}\begin{equation}(1\otimes 1\otimes
L_{\searrow}(x)- R_{\nwarrow}(x)\otimes 1\otimes
1)(Q_2^1+Q_3^2)=0,\mlabel{eq:6.24}
\end{equation}\begin{equation}(1\otimes 1\otimes
L_{\vee}(x)- R_{\nwarrow}(x)\otimes 1\otimes
1)Q_3^2=0,\mlabel{eq:6.25}
\end{equation}\begin{equation}(1\otimes 1\otimes
L_{\succ}(x)- R_{\wedge}(x)\otimes 1\otimes
1)Q_2^2=0,\mlabel{eq:6.26}
\end{equation}\begin{equation}(1\otimes 1\otimes
L_{\succ}(x)- R_{\nwarrow}(x)\otimes 1\otimes
1)Q_1^1=0,\mlabel{eq:6.27}
\end{equation}\begin{equation}(1\otimes 1\otimes
L_{\star}(x)- R_{\nwarrow}(x)\otimes 1\otimes
1)(Q_3^1+Q_3^2)=0.\mlabel{eq:6.28}
\end{equation}
\mlabel{pp:6.5}
\end{prop}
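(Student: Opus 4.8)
The plan is to reduce the statement about the quadri-coalgebra condition to the nine equations $R_i(x)=0$ of Lemma~6.4, and then to unwind each $R_i$ using the coboundary formulas (6.1)--(6.4) with the common element $r$. Concretely, by Lemma~6.4, $(A,\alpha,\beta,\tilde{\alpha},\tilde{\beta})$ is a quadri-coalgebra if and only if $R_1(x)=\cdots=R_9(x)=0$ for all $x\in A$. So it suffices to show that, after substituting the explicit expressions for $\alpha,\beta,\tilde{\alpha},\tilde{\beta}$ from (6.1)--(6.4) (each built from the same skew-symmetric tensor $r$), the identity $R_i(x)=0$ becomes precisely the $i$-th of Eqs.~(6.24)--(6.32).

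The first step is to record, for each cooperation, the image of $x$ as an element of $A\otimes A$ written via $r_{12},r_{13},r_{23}$ and the multiplication operators. For instance, $\alpha(x)=(-1\otimes L_{\searrow}(x)+R_{\star}(x)\otimes1)r$ translates, after identifying $A\otimes A$ inside $A\otimes A\otimes A$ in the appropriate slots, into combinations like $r_{13}\searrow$-terms and $\searrow r_{12}$-terms; and similarly for $\beta,\tilde{\alpha},\tilde{\beta}$. Then I would compute $(\alpha\otimes1)\alpha(x)$, $(1\otimes(\alpha+\beta+\tilde{\alpha}+\tilde{\beta}))\alpha(x)$, etc., pushing the operators through the tensor slots. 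Here the quadri-algebra axioms (2.7)--(2.9) (equivalently, the bimodule relations of Proposition~2.9) are exactly what is needed to re-associate the iterated products $(r_{ab}\,\diamond\,r_{cd})\,\diamond\,r_{ef}$ and to collapse the many terms into the six basic $Q_i^j$ combinations of Definition~6.5, each still sandwiched between an outer operator of the form $(1\otimes1\otimes L_{?}(x)-R_{?}(x)\otimes1\otimes1)$. Skew-symmetry of $r$ is used to identify $\tau r$-type rearrangements that appear from the cross terms (e.g.\ to equate a $r_{23}\,\diamond\,r_{12}$ contribution coming from the ``left'' composite with one coming from the ``right'' composite).

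It then remains to match each resulting expression with the claimed one: $R_1\leftrightarrow$ (6.24), $R_2\leftrightarrow$ (6.25), and so on down to $R_9\leftrightarrow$ (6.32). A clean way to organize this is to treat the nine $R_i$ in the same cyclic pattern as the nine quadri-algebra axioms, so that the bookkeeping of which operator ($L_{\searrow},L_{\vee},L_{\succ},L_{\star}$ on the right; $R_{\star},R_{\prec},R_{\wedge},R_{\nwarrow}$ on the left) attaches to which $Q_i^j$-combination is dictated by the corresponding axiom. As a representative sample I would carry out the computation of $R_5(x)=(\tilde{\beta}\otimes1)\alpha(x)-(1\otimes\alpha)\tilde{\beta}(x)$ in full, showing it equals $(1\otimes1\otimes L_{\searrow}(x)-R_{\wedge}(x)\otimes1\otimes1)Q_2^1$ (Eq.~(6.27)), and remark that the other eight are entirely analogous. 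The main obstacle is purely organizational: it is the sheer volume of terms generated by expanding $(\alpha\otimes1)\alpha(x)$ and its siblings, and the care needed to track the three tensor positions and the $\tau$-twists when invoking skew-symmetry. Once the terms are grouped correctly, each quadri-algebra axiom applies verbatim, and no new idea is required beyond the substitution, the axioms, and skew-symmetry.
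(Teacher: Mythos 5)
Your overall strategy coincides with the paper's: reduce via the quadri-coalgebra lemma to the vanishing of $R_1,\dots,R_9$, substitute the coboundary formulas (6.1)--(6.4) with the common tensor $r$, and regroup using the quadri-algebra axioms so that each $R_i(x)$ collapses to an operator of the form $(1\otimes 1\otimes L_{?}(x)-R_{?}(x)\otimes 1\otimes 1)$ applied to a combination of the $Q_i^j$, matched respectively with the nine listed equations; the paper likewise works out a single representative case ($R_2$, split into three groups, the middle group killed by the axioms) and declares the rest similar. But two concrete points in your proposal are wrong. First, the sample identity you commit to proving is false: $R_5(x)=(\tilde{\beta}\otimes 1)\alpha(x)-(1\otimes\alpha)\tilde{\beta}(x)$ does \emph{not} equal $(1\otimes 1\otimes L_{\searrow}(x)-R_{\wedge}(x)\otimes 1\otimes 1)Q_2^1$. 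A direct expansion, using only $(x\nwarrow y)\nwarrow z=x\nwarrow(y\star z)$, $(x\star y)\searrow z=x\searrow(y\searrow z)$, associativity of $\star$, and relabelling of the summation indices of $r$, gives $R_5(x)=-\bigl(1\otimes 1\otimes L_{\searrow}(x)-R_{\nwarrow}(x)\otimes 1\otimes 1\bigr)(Q_2^1+Q_3^2)$, i.e.\ $R_5$ pairs with the \emph{fifth} listed equation, while the expression you name is the one arising from $R_4(x)=(\tilde{\alpha}\otimes 1)\alpha(x)-(1\otimes(\alpha+\beta))\tilde{\alpha}(x)$ (the fourth listed equation). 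So the pairing you announce is off by one precisely in the case you propose to verify, and as written the representative computation would not go through.

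Second, you assume and invoke skew-symmetry of $r$ (``the same skew-symmetric tensor $r$'', used to ``equate'' cross terms of $r_{23}$-- and $r_{12}$--type). The proposition is stated for an arbitrary $r\in A\otimes A$, with no skew-symmetry hypothesis, and none is needed: in each $R_i$ the cross terms cancel or regroup purely by relabelling dummy indices and applying the quadri-algebra (equivalently, dendriform) axioms --- in the paper's $R_2$ computation the middle group vanishes by $(x\star y)\vee z=x\vee(y\vee z)$ alone, and the $R_4$, $R_5$ computations above use no symmetry of $r$ whatsoever. If your cancellations genuinely relied on $\tau r=-r$, you would only establish the equivalence for skew-symmetric $r$, which is strictly weaker than the statement; skew-symmetry enters only afterwards (Proposition 6.2 and Theorem 6.6), when one additionally wants Eqs.\ (5.1)--(5.18) to hold so as to obtain a quadri-bialgebra.
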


\begin{proof} Let $x\in A$. We only need to prove that $R_i(x)=0, 1\leq i\leq 9$, if and
only if the corresponding equation in
Eqs.~(\meqref{eq:6.20})-(\meqref{eq:6.28}) holds. Since all the
proofs are similar, we only give a detailed proof for that
$R_2(x)=0$ if and only if Eq.~(\meqref{eq:6.21}) holds. In fact,
after rearranging the terms suitably, we divide $R_2(x)$ into
three parts: $R_2(x)=(R1)+(R2)+(R3),$ where
\begin{eqnarray*}
(R1)&=&\sum_{i,j}\{-a_i\otimes a_j\vee b_i\otimes x\searrow b_j+
a_i\prec a_j\otimes b_i\otimes x\searrow b_j+a_j\otimes
a_i\otimes(x\vee b_j)\searrow b_i\\&&-a_j\otimes a_i\otimes(x\vee
b_j)\succ b_i\}\\
&=&(1\otimes 1\otimes L_{\searrow}(x))(-r_{23}\vee
r_{12}+r_{12}\prec r_{13}-r_{13}\nearrow r_{23}),\\
(R2)&=&\sum_{i,j}\{a_i\otimes(a_j\star x)\vee b_i\otimes
b_j-a_j\otimes a_i\star(x\vee b_j)\otimes b_i+a_j\otimes
a_i\wedge(x\vee b_j)\otimes b_i\}=0,\\
(R3)&=&\sum_{i,j}\{-a_i\prec(a_j\star x)\otimes b_i\otimes
b_j-a_j\prec x\otimes a_i\otimes b_j\searrow b_i+a_j\prec x\otimes
a_i\star b_j\otimes b_i\\&
&+a_j\prec x\otimes a_i\otimes b_j\succ b_i-a_j\prec x\otimes a_i\wedge b_j\otimes b_i\\
&=&(R_{\prec}(x)\otimes 1\otimes 1)(-r_{12}\prec
r_{13}+r_{13}\nearrow r_{23}+r_{23}\vee r_{12}).
\end{eqnarray*} So the conclusion follows.\end{proof}

From the above discussion, we arrive at the following conclusion.

\begin{theorem}
Let $(A,\nwarrow,\nearrow,\swarrow,\searrow)$ be a quadri-algebra
and let $r\in A\otimes A$ be skew-symmetric. Then the
comultiplications $\alpha,\beta,\tilde{\alpha}$ and
$\tilde{\beta}$ defined by
Eqs.~(\meqref{eq:6.1})-(\meqref{eq:6.4}) with
$r_{\nwarrow}=r_{\nearrow}=r_{\swarrow}=r_{\searrow}= r$
respectively make
$(A,\nwarrow,\nearrow,\swarrow,\searrow,\alpha,\beta,
\tilde{\alpha},\tilde{\beta})$ into a quadri-bialgebra if and only
if Eqs.~(\meqref{eq:6.20})-(\meqref{eq:6.28}) are satisfied.
\mlabel{thm:6.6}
\end{theorem}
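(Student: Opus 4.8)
The plan is to combine three earlier results. First, by Theorem 5.4 (the equivalence (1)$\Leftrightarrow$(5)), the data $(A,\nwarrow,\nearrow,\swarrow,\searrow,\alpha,\beta,\tilde\alpha,\tilde\beta)$ forms a quadri-bialgebra if and only if two conditions hold: that $(A,\alpha,\beta,\tilde\alpha,\tilde\beta)$ is a quadri-coalgebra, and that the comultiplications satisfy Eqs. (5.1)--(5.18). So the proof reduces to checking these two conditions under the coboundary hypothesis that $\alpha,\beta,\tilde\alpha,\tilde\beta$ are given by Eqs. (6.1)--(6.4) with a common skew-symmetric $r$.

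For the first condition, I would invoke Proposition 6.6 directly: it states precisely that, under the stated coboundary/skew-symmetry hypothesis, $(A,\alpha,\beta,\tilde\alpha,\tilde\beta)$ is a quadri-coalgebra if and only if Eqs. (6.20)--(6.28) hold. For the second condition, I would invoke Proposition 6.2: it asserts that when $r_{\nwarrow}=r_{\nearrow}=r_{\swarrow}=r_{\searrow}=r$ with $r$ skew-symmetric, the comultiplications defined by Eqs. (6.1)--(6.4) automatically satisfy Eqs. (5.1)--(5.18), with no further condition on $r$. Thus Eqs. (5.1)--(5.18) come for free, and the only remaining constraint needed to promote the structure to a quadri-bialgebra is exactly the quadri-coalgebra condition, i.e. Eqs. (6.20)--(6.28).

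Putting these together: by Proposition 6.2 the coboundary comultiplications always satisfy (5.1)--(5.18); by Proposition 6.6 they make $(A,\alpha,\beta,\tilde\alpha,\tilde\beta)$ a quadri-coalgebra precisely when (6.20)--(6.28) hold; and by the definition of quadri-bialgebra (together with Theorem 5.4 if one wants the Manin-triple interpretation), the conjunction of "quadri-coalgebra" and "(5.1)--(5.18)" is exactly what it means for $(A,\nwarrow,\nearrow,\swarrow,\searrow,\alpha,\beta,\tilde\alpha,\tilde\beta)$ to be a quadri-bialgebra. Hence the quadri-bialgebra axioms hold if and only if Eqs. (6.20)--(6.28) are satisfied, which is the assertion of the theorem.

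Since all the technical work has been pushed into Propositions 6.2 and 6.6, there is no real obstacle remaining: the proof is essentially a two-line assembly. If I did not have Proposition 6.2 available, the hard part would be verifying Eqs. (5.1)--(5.18) by hand for the coboundary ansatz — this is the kind of long but mechanical tensor computation (expanding $r_{ij}$'s, using skew-symmetry of $r$, and invoking the nine quadri-algebra axioms together with the bimodule identities of Lemma 4.2) that the flip map $\tau$ and the $(1\otimes L)$, $(R\otimes 1)$ notation in Section 6 are designed to streamline. As it stands, the only thing to be careful about is bookkeeping: making sure the indices $Q_i^j$ in (6.20)--(6.28) match the combinations arising from $R_1,\dots,R_9$ in Lemma 6.4 exactly as recorded in the proof of Proposition 6.6.
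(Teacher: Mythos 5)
Your proposal is correct and is essentially the paper's own argument: the theorem is stated there as an immediate consequence of Proposition 6.2 (the coboundary comultiplications with a common skew-symmetric $r$ automatically satisfy Eqs.\ (5.1)--(5.18)) together with the preceding proposition (quadri-coalgebra if and only if Eqs.\ (6.20)--(6.28)), combined via the definition of a quadri-bialgebra. The only slip is labelling: the quadri-coalgebra criterion is Proposition 6.5, not 6.6, and the appeal to Theorem 5.4 is optional since Definition 5.2 already gives the needed characterization.
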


Moreover, we have the following ``Drinfeld double" construction of
a quadri-bialgebra~\cite{CP}.

\begin{theorem}
 Let $(A,\nwarrow,\nearrow,\swarrow,\searrow,\alpha,\beta,
\tilde{\alpha},\tilde{\beta})$ be a  quadri-bialgebra. Then there
exists a canonical quadri-bialgebra structure on $A\oplus A^*$
such that the inclusion $i_1:A\rightarrow A\oplus A^*$ is a
homomorphism of quadri-bialgebras, which is from
$(A,\nwarrow,\nearrow,\swarrow,\searrow,-\alpha,-\beta,
-\tilde{\alpha},-\tilde{\beta})$ to $A\oplus A^*$, and the
inclusion $i_2:A^*\rightarrow A\oplus A^*$ is also a homomorphism
of quadri-bialgebras from the quadri-bialgebra given in
Remark~\ref{rk:5.5} to $A\oplus A^*$. \mlabel{thm:6.7}
\end{theorem}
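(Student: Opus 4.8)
The plan is to mimic the classical Drinfeld double construction for Lie bialgebras, using the Manin triple picture established in Theorem 5.4 as the bridge. Starting from a quadri-bialgebra $(A,\nwarrow,\nearrow,\swarrow,\searrow,\alpha,\beta,\tilde{\alpha},\tilde{\beta})$, set $\nwarrow_*=\alpha^*$, $\nearrow_*=\beta^*$, $\swarrow_*=\tilde{\alpha}^*$, $\searrow_*=\tilde{\beta}^*$ on $A^*$. By Theorem 5.4, condition (5) implies condition (2): namely $(A\bowtie A^*,A,A^*,\mathfrak{B}_S)$ is a Manin triple of quadri-algebras associated to the nondegenerate invariant bilinear form $\mathfrak{B}_S$ of Eq.\ (3.2). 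This already endows the vector space $A\oplus A^*$ with a quadri-algebra structure, which we take as the underlying algebra of the \emph{Drinfeld $Q$-double}. It remains to equip it with compatible comultiplications.

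The natural candidate for the cobracket on $A\oplus A^*$ is the coboundary one: take $r\in (A\oplus A^*)\otimes (A\oplus A^*)$ to be the canonical element determined by the identity map $A^*\to A^*$ under the identification $(A\oplus A^*)\otimes(A\oplus A^*)\supset A\otimes A^* \oplus A^*\otimes A$, i.e.\ $r=\sum_i e_i\otimes e^i$ where $\{e_i\}$ is a basis of $A$ and $\{e^i\}$ the dual basis, viewed inside the double; then form its skew-symmetrization $r-\tau(r)\in (A\oplus A^*)^{\otimes 2}$. Define $\alpha_D,\beta_D,\tilde{\alpha}_D,\tilde{\beta}_D$ on $A\oplus A^*$ by Eqs.\ (6.1)--(6.4) with this skew-symmetric $r$, the multiplication operators now being those of the double. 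By Theorem 6.4, what must be checked is precisely that the six elements $Q_i^j$ built from this $r$ satisfy Eqs.\ (6.20)--(6.28). I expect that in fact the stronger statement $Q_i^j=0$ holds for all $i,j$, which follows from the axioms of the matched pair / Manin triple structure together with the self-duality built into the canonical $r$; this is the quadri-algebraic analogue of the fact that the canonical $r$-matrix of a Lie bialgebra double satisfies CYBE. Once this is granted, Theorem 6.4 gives that $(A\oplus A^*,\ldots,\alpha_D,\beta_D,\tilde{\alpha}_D,\tilde{\beta}_D)$ is a quadri-bialgebra.

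Finally I would verify the two homomorphism assertions. Unwinding Eqs.\ (6.1)--(6.4) for $x\in A\subset A\oplus A^*$ and projecting onto $A\otimes A$, the multiplication-operator terms involving the $A^*$-part of the double act on $A$ through the bimodule structure dictated by the matched pair of Theorem 5.4(4), and the result collapses — by the very formulas that characterize a coboundary quadri-bialgebra applied to the original data — to $-\alpha(x)$, $-\beta(x)$, $-\tilde{\alpha}(x)$, $-\tilde{\beta}(x)$; hence $i_1$ is a homomorphism of quadri-bialgebras from $(A,\ldots,-\alpha,-\beta,-\tilde{\alpha},-\tilde{\beta})$ (the sign being an artifact of the skew-symmetrization $r-\tau(r)$, which contributes $e_i\otimes e^i$ with one orientation and $-e^i\otimes e_i$ with the other). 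The analogous computation for $x=a^*\in A^*$, now using that $\gamma^*=\nwarrow$, $\delta^*=\nearrow$, $\tilde{\gamma}^*=\swarrow$, $\tilde{\delta}^*=\searrow$ as in Remark 5.5, shows $i_2$ is a homomorphism from the dual quadri-bialgebra of Remark 5.5. That $i_1,i_2$ are algebra homomorphisms is immediate since $A$ and $A^*$ are sub-quadri-algebras of the double. The main obstacle is the bookkeeping in Step 2: verifying $Q_i^j=0$ for the canonical $r$ requires carefully tracking the eight bimodule maps of the matched pair through the definitions (2.10)--(2.12) of $\succ,\prec,\vee,\wedge,\star$ across both $A$ and $A^*$, and this is where the full strength of the nine quadri-algebra axioms (equivalently, all of Eqs.\ (5.1)--(5.18)) is consumed.
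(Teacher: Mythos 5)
There is a genuine gap, and it sits exactly at the step you flag as the heart of the argument. You skew-symmetrize the canonical element, setting $a:=r-\tau(r)$ with $r=\sum_i e_i\otimes e_i^*$, so that you can invoke Proposition 6.2 / Theorem 6.4 (which require skew-symmetry), and you then assert that $Q_i^j(a)=0$ ``as the quadri-algebraic analogue of the fact that the canonical $r$-matrix of a Lie bialgebra double satisfies CYBE.'' But in the Lie bialgebra double it is the \emph{non}-skew-symmetric canonical element $\sum_i e_i\otimes e^i$ that satisfies the CYBE; its skew-symmetrization satisfies only the \emph{modified} CYBE, the obstruction coming from the cross terms between $r$ and $\tau(r)$. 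The same phenomenon occurs here: the paper's computation shows $Q_1^1(r)=0$ (and similarly Eqs.\ (6.15)--(6.19)) for the non-skew-symmetric $r$, and by the $A\leftrightarrow A^*$ symmetry $\tau(r)$ behaves likewise, but the mixed terms in $Q_i^j(r-\tau(r))$ have no reason to cancel, so your central claim is unsupported and, in analogy with the Lie case, should be expected to fail. Nor do you verify the weaker conditions (6.20)--(6.28) for $a$; without one or the other, Theorem 6.4 gives you nothing. There are also smaller symptoms that the computation was not carried out: the minus sign in $-\alpha,-\beta,-\tilde\alpha,-\tilde\beta$ is \emph{not} an artifact of skew-symmetrizing (it is already there for the non-skew $r$), and with $r-\tau(r)$ (no factor $\tfrac12$) the restriction to $A$ would come out as $-2\alpha$ rather than $-\alpha$ even under the most favorable reading.

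The paper's actual route avoids this trap: it keeps the non-skew-symmetric $r=\sum_i e_i\otimes e_i^*$, takes the quadri-algebra structure on $A\oplus A^*$ from the matched pair of Theorem 5.4(4) (as you do), and then, precisely because Proposition 6.2 and Theorem 6.4 are unavailable for a non-skew-symmetric element, it verifies by direct computation on $\mathcal{QD}(A)$ that this $r$ satisfies the compatibility conditions (5.1)--(5.18) (the sample computation is Eq.\ (5.13), checked case by case on $A$, $A^*$ and mixed arguments) and that it satisfies $Q_1^1=0$ and Eqs.\ (6.15)--(6.19), which yields the quadri-coalgebra structure; the homomorphism statements for $i_1$ and $i_2$ then follow from the explicit evaluations $\alpha_{\mathcal{QD}}(e_i)=-\alpha(e_i)$, etc., and the dual computation for $A^*$. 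To repair your proof you would either have to redo these direct verifications for the non-skew-symmetric canonical element (i.e., reproduce the paper's computations), or else prove separately that the comultiplications defined by $\tau(r)$ coincide with those defined by $r$ (using invariance of $\mathfrak{B}_S$ on the double) \emph{and} that the conditions (6.20)--(6.28) hold for $\tfrac12(r-\tau(r))$ --- neither of which is addressed in the proposal.
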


\begin{proof} Let $r=\sum_ie_i\otimes e_i^*$ correspond to the identity map ${\rm id}:A\rightarrow
A$, where $\{e_i,...,e_s\}$ is a basis of $A$ and
$\{e_1^*,...e_s^*\}$ is the dual basis.  Suppose the
quadri-algebra structure
$(\nwarrow_{\bullet},\nearrow_{\bullet},\swarrow_{\bullet},\searrow_{\bullet})$
on $A\oplus A^*$ is given by $\mathcal{QD}(A):=
A\bowtie_{R_{\searrow}^*,L_{\star}^*,-R_{\vee}^*,
-L_{\prec}^*,-R_{\succ}^*,-L_{\wedge}^*,R_{\star}^*,L_{\nwarrow}^*}^{R_{\searrow_*}^*,
L_{\star_*}^*,-R_{\vee_*}^*,-L_{\prec_*}^*,-R_{\succ_*}^*,-L_{\wedge_*}^*,R_{\star_*}^*,
L_{\nwarrow_*}^*}A^*$, where the subscript $\ast$ is used to
denote the quadri-algebra structure on $A^*$. Then for all $x,y\in
A,a^*,b^*\in A^*$,

{\small$$
x\nwarrow_{\bullet}y=x\nwarrow y,
x\nearrow_{\bullet}y=x\nearrow y,   x\swarrow_{\bullet}y=x\swarrow
y, x\searrow_{\bullet}y=x\searrow y,$$
$$
\smallskip
a^*\nwarrow_{\bullet}b^*=a^*\nwarrow_*b^*,
a^*\nearrow_{\bullet}b^*=a^*\nearrow_*b^*,
a^*\swarrow_{\bullet}b^*=a^*\swarrow_*b^*,
a^*\searrow_{\bullet}b^*=a^*\searrow_*b^*,$$
$$\smallskip
x\nwarrow_{\bullet}a^*=R_{\searrow}^*(x)a^*+L_{\star_*}^*(a^*)x,
x\nearrow_{\bullet}a^*=-R_{\vee}^*(x)a^*-L_{\prec_*}^*(a^*)x,
x\swarrow_{\bullet}a^*=-R_{\succ}^*(x)a^*-L_{\wedge_*}^*(a^*)x,$$
$$\smallskip
x\searrow_{\bullet}a^*=R_{\star}^*(x)a^*+L_{\nwarrow_*}^*(a^*)x,
a^*\nwarrow_{\bullet}x=R_{\searrow_*}^*(a^*)x+L_{\star}^*(x)a^*,
a^*\nearrow_{\bullet}x=-R_{\vee^*}^*(a^*)x-L_{\prec}^*(x)a^*,$$
$$\smallskip
a^*\swarrow_{\bullet}x=-R_{\succ^*}^*(a^*)x-L_{\wedge}^*(x)a^*,
a^*\searrow_{\bullet}x=R_{\star_*}^*(a^*)x+L_{\nwarrow}^*(x)a^*.$$}
We prove that $r$ satisfies
Eqs.~(\meqref{eq:5.1})-(\meqref{eq:5.18}). We provide a detailed
proof that $r$ satisfies Eq.~(\meqref{eq:5.13}) as an example. The
proofs of the other cases follow from the same argument.

for all $\mu,\nu\in\mathcal{QD}(A)$, Eq.~(\ref{eq:5.13}) is
equivalent to
\begin{eqnarray*}
& &\sum_k\{-(\mu\wedge_{\bullet}e_k^*)\wedge_{\bullet}\nu\otimes
e_k-e_k^*\wedge_{\bullet}\nu\otimes e_k\swarrow_{\bullet}\mu+
\mu\wedge_{\bullet}e_k^*\otimes\nu\searrow_{\bullet}e_k+
e_k^*\otimes\nu\searrow_{\bullet}(e_k\swarrow_{\bullet}\mu)\\&&+\mu\wedge
e_k\otimes\nu\searrow_{\bullet}e_k^*-\mu\wedge_{\bullet}(e_k\star_{\bullet}\nu)\otimes
e_k^*+ e_k\otimes(\nu\succ_{\bullet}e_k^*)\swarrow_{\bullet}\mu-
e_k\wedge_{\bullet}\nu\otimes e_k^*\swarrow_{\bullet}\mu\}=0.
\end{eqnarray*}
The proof of the equation is divided into the following four
cases: (I). $\mu,\nu\in A$; (II). $\mu,\nu\in A^*$; (III). $\mu\in
A, \nu\in A^*$; (IV). $\mu\in A^*,\nu\in A$. We proceed to prove
the last case as the proofs of the other cases are similar. Let
$\mu=e_i^*, \nu=e_j$. Then for all $m$ and $n$, the coefficient of
$e_m^*\otimes e_n$ is
\begin{eqnarray*}
& &\sum_k\langle
-(e_i^*\wedge_{\bullet}e_n^*)\wedge_{\bullet}e_j,e_m\rangle -
\langle e_k^*\wedge_{\bullet}e_j,e_m\rangle \langle
e_k\swarrow_{\bullet}e_i^*,e_n^*\rangle +
\langle e_i^*\wedge_{\bullet}e_k^*,e_m\rangle \langle e_j\searrow_{\bullet}e_k,e_n^*\rangle \\
& & +\langle
e_j\searrow_{\bullet}(e_m\swarrow_{\bullet}e_i^*),e_n^*\rangle +
\langle e_i^*\wedge_{\bullet}e_k,e_m\rangle \langle e_j\searrow_{\bullet}e_k^*,e_n^*\rangle \\
&=&\sum_k-\langle e_i^*\wedge_*e_n^*,e_j\succ e_m\rangle +\langle
e_k^*,e_j\succ e_m\rangle \langle e_k,e_i^*\wedge_*e_n^*\rangle +
\langle e_i^*\wedge_*e_k^*,e_m\rangle \langle e_j\searrow
e_k,e_n^*\rangle \\
& &-\langle e_i^*,e_k\succ e_m\rangle \langle
e_j,e_k^*\nwarrow_*e_n^*\rangle - \langle
e_m,e_i^*\wedge_*e_k^*\rangle \langle e_j\searrow e_k,e_n^*\rangle +
\langle e_i^*,e_k\succ e_m\rangle \langle
e_j,e_k^*\nwarrow_*e_n^*\rangle\\&=&0.
\end{eqnarray*} Similarly, the coefficients of $e_m\otimes e_n,
e_m\otimes e_n^*$ and  $e_m^*\otimes e_n^*$ are zero.

Furthermore, we have
\begin{eqnarray*}
r_{23}\wedge_{\bullet}r_{12}&=&\sum_{i,j}e_j\otimes
e_i\wedge_{\bullet}e_j^*\otimes e_i^*\\
&=&
\sum_{i,j}e_j\otimes[-R_{\swarrow}^*(e_i)e_j^*+
L_{\succ_*}^*(e_j^*)e_i]\otimes e_i^*\\
&=&\sum_{i,j,k}-e_j\otimes e_k^*\langle e_j^*,e_k\swarrow e_i\rangle
\otimes
e_i^*+e_j\otimes e_k\langle e_i,e_j^*\succ_*e_k^*\rangle \otimes e_i^*\\
&=&\sum_{i,k}-e_k\swarrow e_i\otimes e_k^*\otimes e_i^* +e_i\otimes
e_k\otimes e_i^*\succ_*e_k^*\\
&=&-r_{12}\swarrow_{\bullet}r_{13}+r_{13}\succ_{\bullet}r_{23}.
\end{eqnarray*}
So $Q_1^1=0$. Similarly, $r$ satisfies Eqs.
(\mref{eq:6.15})-(\mref{eq:6.19}). So the cooperations (for all
$x\in A\oplus A^*$)
$$\alpha_{\mathcal{QD}}(x)=(-1\otimes L_{\searrow}(x)+R_{\star}(x)\otimes
1)r, \;\;  \beta_{\mathcal{QD}}(x)=(1\otimes L_{\vee}(x)-
R_{\prec}(x)\otimes 1)r,$$
$$\tilde{\alpha}_{\mathcal{QD}}(x)=(1\otimes L_{\succ}(x)-
R_{\wedge}(x)\otimes 1)r,
\;\;\tilde{\beta}_{\mathcal{QD}}(x)=(-1\otimes L_{\star}(x)+
R_{\nwarrow}(x)\otimes 1)r$$ induce a quadri-bialgebra structure
on $A\oplus A^*$.

On the other hand, for $e_i\in A$, we have:
\begin{eqnarray*}
\alpha_{\mathcal{QD}}(e_i)
&=&\sum_{j,k}\{-e_j\otimes(\langle e_j^*,e_k\star e_i\rangle e_k^*+
\langle e_j^*\nwarrow_*e_k^*,e_i\rangle e_k)+e_j\star e_i\otimes e_j^*\}\\
&=&-\sum_{j,k}\langle e_j^*\nwarrow_*e_k^*,e_i\rangle e_j\otimes
e_k\\&=&-\alpha(e_i).
\end{eqnarray*}
Similarly, we have $\beta_{\mathcal{QD}}(e_i)=-\beta(e_i),
\tilde{\alpha}_{\mathcal{QD}}(e_i)=-\tilde{\alpha}(e_i)$ and
$\tilde{\beta}_{\mathcal{QD}}(e_i)=-\tilde{\beta}(e_i)$. So $i_1$ is
a homomorphism of quadri-bialgebras. Similarly, we prove that $i_2$
is also a homomorphism of quadri-bialgebras.\end{proof}

\begin{defn}
{\rm \quad Let
$(A,\nwarrow,\nearrow,\swarrow,\searrow,\alpha,\beta,
\tilde{\alpha},\tilde{\beta})$ be a quadri-bialgebra. With the
quadri-bialgebra structure given in Theorem~\mref{thm:6.7},
$A\oplus A^*$ is called the {\bf Drinfeld $Q$-double} of $A$ and
we denote it by $\mathcal{QD}(A)$. On the other hand, due to the
symmetry between $A$ and $A^*$ (Remark~\mref{rk:5.5}),
$\tilde{r}:=\sum_ie_i^*\otimes e_i$ also induces a (coboundary)
quadri-bialgebra structure on $A\oplus A^*$, and we denote it by
$\tilde{\mathcal{QD}}(A)$.} \mlabel{de:6.8}
\end{defn}

\begin{prop}Let
$(A,\nwarrow,\nearrow,\swarrow,\searrow,\alpha,\beta,
\tilde{\alpha},\tilde{\beta})$ be a quadri-bialgebra. Suppose that
$\alpha,\beta,\tilde{\alpha}$ and $\tilde{\beta}$ are defined by
Eqs.~(\meqref{eq:6.1})-(\meqref{eq:6.4}) with
$r_{\nwarrow}=r_{\nearrow} =r_{\swarrow}=r_{\searrow}=r$
respectively.

\begin{enumerate}
\item If $r$ satisfies Eqs.~(\meqref{eq:6.14})-(\meqref{eq:6.19}),
then $T_r$ is a homomorphism of quadri-bialgebras which is from
the quadri-bialgebra given in Remark~\mref{rk:5.5} to
$(A,\nwarrow,\nearrow$, $\swarrow,\searrow,-\alpha,-\beta$,
$-\tilde{\alpha}$, $-\tilde{\beta})$. \mlabel{it:6.9a} \item If
$r$ satisfies Eqs.~(\meqref{eq:6.14})-(\meqref{eq:6.19}) and $r$
is skew-symmetric, then
\begin{equation}
\tilde{T}_r(x+a^*):=x+T_r(a^*),\forall x\in A,a^*\in A^*,
\end{equation} is a homomorphism of quadri-bialgebras from $\tilde{\mathcal{QD}}(A)$ to
$(A,\nwarrow,\nearrow,\swarrow,\searrow,\alpha,\beta,
\tilde{\alpha},\tilde{\beta})$.
\mlabel{it:6.9b}
\end{enumerate}
\mlabel{pp:6.9}
\end{prop}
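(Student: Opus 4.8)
\emph{Part (1).} The plan is to rewrite each defining identity of a quadri-bialgebra homomorphism as a linear combination of the $Q_i^j$-equations, and then to deduce Part (2) from Part (1) and the explicit description of the Drinfeld $Q$-double. So for (1), by the definition of a homomorphism of quadri-bialgebras I must check, separately, that $T_r$ respects the four multiplications and the four comultiplications. On the algebra side this means $T_r(a^*\diamond_* b^*)=T_r(a^*)\diamond T_r(b^*)$ for all $a^*,b^*\in A^*$ and $\diamond\in\{\nwarrow,\nearrow,\swarrow,\searrow\}$, where the quadri-algebra structure on $A^*$ is the one of Remark 5.5, namely $\nwarrow_*=\alpha^*$, $\nearrow_*=\beta^*$, $\swarrow_*=\tilde\alpha^*$, $\searrow_*=\tilde\beta^*$. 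I would expand $a^*\diamond_* b^*$ using Eqs. (6.1)--(6.4) and the pairing conventions of Section 1, apply $T_r$, subtract $T_r(a^*)\diamond T_r(b^*)$, and pair the difference against an arbitrary $c^*\in A^*$. After reorganizing the sums, the vanishing of this quantity for all $c^*$ amounts to the vanishing of an element of $A\otimes A\otimes A$ that is quadratic in $r$; for $\diamond=\nwarrow$ it comes out as $r_{12}\nwarrow r_{13}+r_{13}\searrow r_{23}-r_{23}\star r_{12}$, and rewriting $\nwarrow=\wedge-\nearrow$, $\searrow=\vee-\swarrow$, $\star=\succ+\prec$ exhibits this as $Q_2^1+Q_3^2$, hence $0$ by hypothesis; the other three operations reduce in the same way to sums of two of the $Q_i^j$'s. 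On the coalgebra side I must show $(T_r\otimes T_r)\circ\gamma=-\alpha\circ T_r$ and the analogues $\delta\mapsto-\beta$, $\tilde\gamma\mapsto-\tilde\alpha$, $\tilde\delta\mapsto-\tilde\beta$, where $\gamma^*=\nwarrow$, $\delta^*=\nearrow$, $\tilde\gamma^*=\swarrow$, $\tilde\delta^*=\searrow$. Pairing $(T_r\otimes T_r)\gamma(a^*)+\alpha(T_r(a^*))$ against $u^*\otimes v^*$ and using the same conventions turns the $\gamma$-identity into $r_{13}\nwarrow r_{23}-r_{23}\searrow r_{12}+r_{12}\star r_{13}=0$, which the same expansion identifies with $Q_3^1-Q_1^2$; the remaining three are identical. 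The minus sign on the target simply drops out of these computations, and no skew-symmetry is needed here.

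\emph{Part (2).} Write $\tilde T_r=\mathrm{id}_A\oplus T_r$. By Definition 6.8 and the symmetry of Remark 5.5, $\tilde{\mathcal{QD}}(A)$ is the Drinfeld $Q$-double of the dual quadri-bialgebra $B:=(A^*,\alpha^*,\beta^*,\tilde\alpha^*,\tilde\beta^*,\gamma,\delta,\tilde\gamma,\tilde\delta)$, with $r$-matrix $\tilde r=\sum_i e_i^*\otimes e_i$; so inside $\tilde{\mathcal{QD}}(A)=B\bowtie B^*$ the subspace $A=B^*$ carries its original structure $(A,\nwarrow,\nearrow,\swarrow,\searrow,\alpha,\beta,\tilde\alpha,\tilde\beta)$, while the subspace $A^*=B$ carries $(A^*,\alpha^*,\beta^*,\tilde\alpha^*,\tilde\beta^*,-\gamma,-\delta,-\tilde\gamma,-\tilde\delta)$ (this is what Theorem 6.7, applied to $B$, records). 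On the summand $A$ the map $\tilde T_r$ is the identity, hence a homomorphism; on the summand $A^*$ it is $T_r$, which by Part (1) is a homomorphism of quadri-bialgebras onto $(A,\nwarrow,\nearrow,\swarrow,\searrow,-\alpha,-\beta,-\tilde\alpha,-\tilde\beta)$, and since negating all four comultiplications on source and target simultaneously preserves the coalgebra-homomorphism property, $T_r$ is equally a homomorphism onto $(A,\nwarrow,\nearrow,\swarrow,\searrow,\alpha,\beta,\tilde\alpha,\tilde\beta)$. It then remains to verify the mixed conditions $\tilde T_r(x\diamond a^*)=x\diamond T_r(a^*)$ and $\tilde T_r(a^*\diamond x)=T_r(a^*)\diamond x$ for the four quadri-operations, together with the four corresponding conditions for the comultiplications; for these I would substitute the explicit formulas for the $\tilde{\mathcal{QD}}(A)$-structure (the matched-pair and coboundary formulas obtained from the proof of Theorem 6.7 by relabeling), together with (6.1)--(6.4), and use that $r$ is a skew-symmetric solution of $Q_1^1=\cdots=Q_3^2=0$. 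Skew-symmetry is used precisely here, equivalently through the identity $T_r^*=-T_r$: it is what lets the diagonal piece $\mathrm{id}_A$ and the off-diagonal piece $T_r$ fit together into a single map that is both multiplicative and comultiplicative across $A$ and $A^*$.

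The main obstacle is the sign- and index-bookkeeping in Part (1): each of the eight homomorphism identities has to be pushed, through the pairing conventions of Section 1, into a tensor identity in $A^{\otimes 3}$ and then recognized as a definite linear combination of two of the six $Q_i^j$-equations. This is the same kind of computation as the one carried out at the end of the proof of Theorem 6.7, but it must be done systematically for all four operations and all four duals; once that dictionary is in place, Part (2) is essentially formal.
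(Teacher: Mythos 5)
Your proposal is correct, and for Part (1) and the multiplicative half of Part (2) it is essentially the paper's argument: the paper's opening observation that $(1\otimes\alpha)r=r_{12}\nwarrow r_{13}$ and $(\alpha\otimes 1)r=-r_{13}\nwarrow r_{23}$ is exactly your reduction of the two $\nwarrow$-identities to $Q_2^1+Q_3^2=0$ and $Q_3^1-Q_1^2=0$ (you make the dependence on the $Q$-equations explicit, which the paper leaves implicit), and the mixed products $x\diamond a^*$, $a^*\diamond x$ are handled in the paper by precisely the computation you outline, with skew-symmetry entering only there. Where you genuinely diverge is the coalgebra half of Part (2): the paper simply checks $(\tilde T_r\otimes\tilde T_r)(\tilde r)=r$ and then invokes the coboundary form of both sets of cooperations together with the already-proved multiplicativity of $\tilde T_r$, so comultiplicativity follows in one stroke; you instead identify $\tilde{\mathcal{QD}}(A)$ with the $Q$-double of the dual bialgebra of Remark 5.5, read off from Theorem 6.7 that the cooperations of the double restrict on the summands to $(\alpha,\beta,\tilde\alpha,\tilde\beta)$ on $A$ and to $(-\gamma,-\delta,-\tilde\gamma,-\tilde\delta)$ on $A^*$, and then transfer via Part (1). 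That route is valid, but note that once you have it, the coalgebra-homomorphism condition is linear in its single argument, so it splits over $A\oplus A^*$ and there are no ``mixed conditions for the comultiplications'' left to verify --- that extra item in your checklist is vacuous; only the mixed products require the explicit skew-symmetry computation. The trade-off: your route reuses Theorem 6.7 and Part (1) and avoids any new computation for the coproducts, while the paper's $(\tilde T_r\otimes\tilde T_r)(\tilde r)=r$ argument is a two-line verification that does not need the identification of $\tilde{\mathcal{QD}}(A)$ with $\mathcal{QD}(A^*)$ or the sign bookkeeping of Theorem 6.7 at all.
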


\begin{proof} (\meqref{it:6.9a}). Note that
$(1\otimes\alpha)r=r_{12}\nwarrow r_{13},(\alpha\otimes 1)r=
-r_{13}\nwarrow r_{23}$. Set $\nwarrow_*:=\alpha^*$. Then for all
$a^*,b^*\in A^*$ we have
\begin{eqnarray*}
T_r(a^*\nwarrow_*b^*)&=&\langle 1\otimes(a^*\nwarrow_*b^*),r\rangle\\
&=&\langle 1\otimes a^*\otimes b^*,(1\otimes\alpha)r\rangle\\
& =&\langle 1\otimes a^*\otimes b^*,r_{12}\nwarrow r_{13}\rangle\\
&=&T_r(a^*)\nwarrow T_r(b^*),\\
(T_r\otimes T_r)\gamma(a^*)&=&T_r(a_{(1)}^*)\otimes
T_r(a_{(2)}^*)\\
&=&\sum_iu_i\otimes u_j\langle a_{(1)}^*,v_i\rangle
\langle a_{(2)}^*,v_j\rangle\\
& =& \langle 1\otimes 1\otimes
a^*,r_{13}\nwarrow r_{23}\rangle\\
& =&-(1\otimes 1\otimes
a^*)(\alpha\otimes 1)r\\
&=&-\alpha(T_r(a^*)).
\end{eqnarray*}
Here we use
the Sweedler's notation: $\gamma(a^*)=a_{(1)}^*\otimes a_{(2)}^*$.
Similarly $T_r$ also preserves the other operations and cooperations. So
the conclusion follows.

(\meqref{it:6.9b}). We still denote the products of
$\tilde{\mathcal{QD}}(A)$ by
$\nwarrow,\nearrow,\swarrow,\searrow$. First we prove that
$\tilde{T}_r$ is a homomorphism of quadri-algebras, that is,
$\tilde{T}_r(\mu\diamond\nu)=\tilde{T}_r(\mu)\diamond\tilde{T}_r(\nu)$
for all $\mu,\nu\in A\oplus A^*$ and
$\diamond\in\{\nwarrow,\nearrow,\swarrow,\searrow\}$. It is
obvious when $\mu,\nu\in A$. Moreover, for all $x\in A,a^*\in A^*$
we have
\begin{eqnarray*}
\tilde{T}_r(x\nwarrow
a^*)&=&\tilde{T}_r(R_{\searrow}^*(x)a^*+L_{\star}^*(a^*)x)\\
&=&T_r(R_{\searrow}^*(x)a^*)+L_{\star}^*(a^*)x\\
&=&(1\otimes a^*)((1\otimes R_{\searrow}(x))r)+(a^*\otimes
1)((-1\otimes L_{\nwarrow}(x)+R_{\searrow}(x)\otimes 1)r)\\
&=&(1\otimes a^*)((L_{\nwarrow}(x)\otimes 1)r)\\
&=&\tilde{T}_r(x)\nwarrow\tilde{T}_r(a^*),
\end{eqnarray*}
where we use the fact that $r$ is skew-symmetric. Similarly,
$\tilde{T}_r(a^*\nwarrow
x)=\tilde{T}_r(a^*)\nwarrow\tilde{T}_r(x)$ and, for each
$\diamond\in\{\nearrow,\swarrow,\searrow\}$, we have
$\tilde{T}(\mu\diamond\nu)=\tilde{T}_r(\mu)\diamond\tilde{T}_r(\nu)$
for all $\mu\in A,\nu\in A^*$ or $\mu\in A^*,\nu\in A$. On the
other hand, by Item~(\meqref{it:6.9a}), for all $a^*,b^*\in A^*$,
we have
$$\tilde{T}_r(a^*\diamond b^*)=T_r(a^*\diamond b^*)=T_r(a^*)\diamond
T_r(b^*)=\tilde{T}_r(a^*)\diamond \tilde{T}_r(b^*),$$ where
$\diamond\in\{\nwarrow,\nearrow,\swarrow,\searrow\}$. Therefore,
$\tilde{T}_r$ is a homomorphism of quadri-algebras.

Furthermore, let $\{e_1,...,e_n\}$ be a basis of $A$ and let
$\{e_1^*,...,e_n^*\}$ be the dual basis. Define
$\tilde{r}:=\sum_ie_i^*\otimes e_i$. Then we have
$$(\tilde{T}_r\otimes\tilde{T}_r)(\tilde{r})=\sum_i\tilde{T}_r(e_i^*)\otimes\tilde{T}_r(e_i)=
\sum_i(1\otimes e_i^*)(r)\otimes e_i=r,$$ which implies that
$\tilde{T}_r$ is a homomorphism of quadri-coalgebras. So the
conclusion follows.\end{proof}

\section{The $Q$-equations}
\mlabel{sec:qeq}

We now show that the equations in Definition~\mref{de:6.4} boil
down to a pair of equations, called \qeq which have a clear
connection with $\calo$-operators.

 By Theorem~\mref{thm:6.6},  we
have the following conclusion.

\begin{prop}
 Let
$(A,\nwarrow,\nearrow,\swarrow,\searrow)$ be a quadri-algebra and
$r\in A\otimes A$ be skew-symmetric. Then the comultiplications
$\alpha,\beta,\tilde{\alpha}$ and $\tilde{\beta}$ defined by
Eqs.~(\meqref{eq:6.1})-(\meqref{eq:6.4}) with
$r_{\nwarrow}=r_{\nearrow} =r_{\swarrow}=r_{\searrow}=r$
respectively make
$(A,\nwarrow,\nearrow,\swarrow,\searrow,\alpha,\beta,
\tilde{\alpha},\tilde{\beta})$ into a quadri-bialgebra if $r$
satisfies Eqs.~(\meqref{eq:6.14})-(\meqref{eq:6.19}).
\mlabel{pp:7.1}
\end{prop}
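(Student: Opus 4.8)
The plan is to obtain this as an immediate corollary of the equivalences established in Section~6, observing that the hypothesis Eqs.~(6.14)--(6.19) --- which asserts that all six elements $Q_1^1,Q_1^2,Q_2^1,Q_2^2,Q_3^1,Q_3^2$ of $A\otimes A\otimes A$ vanish --- is far stronger than what the conclusion requires.

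By Definition~5.2(2), to show that $(A,\nwarrow,\nearrow,\swarrow,\searrow,\alpha,\beta,\tilde{\alpha},\tilde{\beta})$ is a quadri-bialgebra it is enough to verify two things: that the comultiplications $\alpha,\beta,\tilde{\alpha},\tilde{\beta}$ satisfy Eqs.~(5.1)--(5.18), and that $(A,\alpha,\beta,\tilde{\alpha},\tilde{\beta})$ is a quadri-coalgebra. The first holds already by Proposition~6.2, since $r$ is skew-symmetric and $r_{\nwarrow}=r_{\nearrow}=r_{\swarrow}=r_{\searrow}=r$. For the second, Proposition~6.5 characterizes $(A,\alpha,\beta,\tilde{\alpha},\tilde{\beta})$ being a quadri-coalgebra by Eqs.~(6.20)--(6.28); but each of Eqs.~(6.20)--(6.28) has the shape: a fixed linear operator on $A\otimes A\otimes A$ applied to an explicit linear combination of $Q_1^1,\dots,Q_3^2$ equals $0$. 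Hence, if $r$ satisfies Eqs.~(6.14)--(6.19), every one of Eqs.~(6.20)--(6.28) is satisfied trivially, so $(A,\alpha,\beta,\tilde{\alpha},\tilde{\beta})$ is a quadri-coalgebra.

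Combining these two points gives the claim; equivalently one may quote Theorem~6.6 directly, whose sufficient condition Eqs.~(6.20)--(6.28) is implied by Eqs.~(6.14)--(6.19). There is no genuine obstacle here, the statement being a formal consequence of results already in hand; the only step deserving a moment's attention is checking that each of Eqs.~(6.20)--(6.28) really is assembled solely out of the $Q_i^j$ with no extra terms, which is clear from inspecting their displayed forms.
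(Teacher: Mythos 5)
Your proposal is correct and matches the paper's own reasoning: the paper deduces this proposition directly from Theorem 6.6 (cited there, with a typo, as ``Theorem 6.1''), which itself is the combination of Proposition 6.2 and Proposition 6.5 that you spell out, and the key observation that Eqs.~(6.20)--(6.28) hold trivially once all $Q_i^j$ vanish is exactly the point. Nothing further is needed.
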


As a related notion, we recall
\begin{defn}{\rm \cite{Bai3} Let $(A,\nwarrow,\nearrow$, $\swarrow,\searrow)$ be a
quadri-algebra and let $(V,l_{\nwarrow},r_{\nwarrow},l_{\nearrow},
r_{\nearrow}$, $l_{\swarrow},r_{\swarrow}$,
$l_{\searrow},r_{\searrow})$ be a bimodule. An {\bf
$\mathcal{O}$-operator} of $A$ associated to the bimodule $V$ is a
linear map $T$ from $V$ to $A$ such that for all $u,v\in V$,
\begin{equation}T(u)\circ T(v)=
T(l_{\circ}(T(u))v+ r_{\circ}(T(v))u),
\circ\in\{\nwarrow,\nearrow,\swarrow,\searrow\}.
\mlabel{eq:7.1}
\end{equation}}
\label{de:7.2}
\end{defn}

In fact, the following relation holds.
\begin{prop} {\rm \cite{Bai3}} Let $(A,\nwarrow,\nearrow,\swarrow,\searrow)$ be a
quadri-algebra and let $r\in A\otimes A$ be skew-symmetric. Then the
following conditions are equivalent:
\begin{enumerate}
\item $r$ satisfies Eqs.~(\meqref{eq:6.14})-(\meqref{eq:6.15});
\mlabel{it:7.3a} \item $r$ satisfies
Eqs.~(\meqref{eq:6.16})-(\meqref{eq:6.17}); \mlabel{it:7.3b} \item
$r$ satisfies Eqs.~(\meqref{eq:6.18})-(\meqref{eq:6.19});
\mlabel{it:7.3c} \item $T_r$ is an $\mathcal{O}$-operator  of $A$
associated to the bimodule
$(A^*,R_{\searrow}^*,L_{\star}^*,-R_{\vee}^*,
-L_{\prec}^*,-R_{\succ}^*,-L_{\wedge}^*,R_{\star}^*$,
$L_{\nwarrow}^*)$; \mlabel{it:7.3d} \item $T_r$ is an
$\mathcal{O}$-operator of $A_v$ associated to the bimodule
$(A^*,-R_{\swarrow}^*,L_{\succ}^*,R_{\prec}^*, -L_{\nearrow}^*)$.
\mlabel{it:7.3e}
\end{enumerate}
\mlabel{pp:7.3}
\end{prop}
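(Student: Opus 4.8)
The plan is to reduce both $\mathcal{O}$-operator conditions (4) and (5) to tensor identities in $A^{\otimes 3}$ by means of the correspondence $r\leftrightarrow T_r$, to recognize those identities as (linear combinations of) the equations $Q_1^1,\dots,Q_3^2$ of Eqs.\ (6.14)--(6.19), and to exploit a cyclic symmetry among the $Q$-equations to obtain (1)$\Leftrightarrow$(2)$\Leftrightarrow$(3). Write $r=\sum_i a_i\otimes b_i$, so that $T_r(v^*)=\sum_i\langle v^*,b_i\rangle a_i$. Skew-symmetry of $r$ says $\sum_i a_i\otimes b_i=-\sum_i b_i\otimes a_i$, equivalently $\langle u^*,T_r(v^*)\rangle=-\langle v^*,T_r(u^*)\rangle$; this is exactly what will convert the factors $(-r)_{ij}$ produced by the dual maps back into $r_{ij}$ at the end of each computation.

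First I would treat (5). Writing out the two defining identities of an $\mathcal{O}$-operator of $A_v=(A,\wedge,\vee)$ associated to $(A^*,-R_{\swarrow}^*,L_{\succ}^*,R_{\prec}^*,-L_{\nearrow}^*)$, namely $T_r(a^*)\wedge T_r(b^*)=T_r(-R_{\swarrow}^*(T_r(a^*))b^*+L_{\succ}^*(T_r(b^*))a^*)$ and the analogous one for $\vee$, then evaluating both sides on an arbitrary $c^*\in A^*$ and unwinding the definitions of $R^*$, $L^*$ and $T_r$, each side becomes a pairing of $a^*\otimes b^*\otimes c^*$ with an explicit element of $A^{\otimes 3}$. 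After using skew-symmetry as above, the first identity becomes precisely $r_{12}\wedge r_{13}-r_{23}\succ r_{12}-r_{13}\swarrow r_{23}=0$, i.e.\ $Q_2^1=0$, and the second becomes $Q_2^2=0$. Hence (5)$\Leftrightarrow$(2).

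Next I would treat (4) in the same manner: the four defining identities, one for each of $\nwarrow,\nearrow,\swarrow,\searrow$, translate into tensor identities $E_{\nwarrow}=E_{\nearrow}=E_{\swarrow}=E_{\searrow}=0$. Expanding the composite operations $\star,\wedge,\vee,\succ,\prec$ occurring in the $E_{\diamond}$'s into the four primitive ones (using $\star=\nwarrow+\nearrow+\swarrow+\searrow$, $\wedge=\nwarrow+\nearrow$, $\prec=\nwarrow+\swarrow$, and so on), one obtains the clean relations $E_{\swarrow}=Q_1^1$, $E_{\nearrow}=-Q_3^2$, $E_{\nwarrow}=Q_2^1+Q_3^2$, $E_{\searrow}=Q_2^2-Q_1^1$; in particular $E_{\nwarrow}+E_{\nearrow}=Q_2^1$ and $E_{\swarrow}+E_{\searrow}=Q_2^2$, which already makes the implication (4)$\Rightarrow$(5) transparent. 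Thus (4)$\Leftrightarrow$($Q_1^1=Q_2^1=Q_2^2=Q_3^2=0$). To close the circle I would then establish (1)$\Leftrightarrow$(2)$\Leftrightarrow$(3) via the cyclic permutation $\sigma$ of the three tensor factors of $A^{\otimes 3}$: since $r$ is skew-symmetric one has $r_{21}=-r_{12}$ and $r_{31}=-r_{13}$, and a term-by-term check yields $\sigma(Q_1^j)=-Q_3^j$, $\sigma(Q_2^j)=-Q_1^j$, $\sigma(Q_3^j)=Q_2^j$ for $j=1,2$. As $\sigma$ is invertible, the vanishing of any single one of $Q_1^j,Q_2^j,Q_3^j$ forces the vanishing of all three. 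Combining everything: (2)$\Leftrightarrow$($Q_2^1=Q_2^2=0$)$\Leftrightarrow$(all six $Q$-equations hold)$\Leftrightarrow$($Q_1^1=Q_2^1=Q_2^2=Q_3^2=0$)$\Leftrightarrow$(4), and ``all six hold'' is, by the symmetry, each of (1), (3) as well.

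The main obstacle is the bookkeeping in the second and third steps: tracking which tensor slot each $r_{ij}$ occupies, tracking the signs introduced by the dual maps $L_{\diamond}^*,R_{\diamond}^*$ together with the repeated appeals to skew-symmetry, and carrying out the expansion of the composite operations so as to land exactly on the $Q$-equations with the correct signs. None of this is conceptually hard, but it must be done with care; the sign computation in the symmetry step is lighter but similarly demands attention.
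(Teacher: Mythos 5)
This proposition is quoted in the paper from \cite{Bai3} and no proof is given here, so there is nothing internal to compare against; judged on its own, your argument is sound and is the natural one: translate each $\mathcal O$-operator identity into an element of $A^{\otimes 3}$ via $T_r$ and skew-symmetry, identify it with (combinations of) the $Q_i^j$, and use the cyclic symmetry of the $Q$-equations to merge (1), (2), (3). I checked the key claims: the relations $\sigma(Q_1^j)=-Q_3^j$, $\sigma(Q_2^j)=-Q_1^j$, $\sigma(Q_3^j)=Q_2^j$ do hold for a skew-symmetric $r$ (with $\sigma(x\otimes y\otimes z)=z\otimes x\otimes y$), and the four identities coming from (4) are indeed equivalent to the vanishing of $Q_1^1$, $Q_3^2$, $Q_2^1+Q_3^2$, $Q_2^2-Q_1^1$, whose sums reproduce the two identities of (5), so the chain (4)$\Leftrightarrow$(2)$\Leftrightarrow$(5) together with the $\sigma$-symmetry closes all five equivalences. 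One small caveat on labels rather than logic: the precise identifications you state (the $\wedge$-identity of (5) being $Q_2^1=0$, $E_{\nwarrow}=Q_2^1+Q_3^2$, etc.) hold for one specific choice of how the slots of $A^{\otimes 3}$ are matched to the three dual vectors when you unwind the pairing; with the other natural choice one lands on the $\sigma$-rotated combinations (e.g.\ the $\wedge$-identity of (5) reads $Q_3^1=0$ and $E_{\nwarrow}$ becomes $r_{13}\nwarrow r_{23}-r_{23}\searrow r_{12}+r_{12}\star r_{13}$). Since you establish the cyclic equivalences anyway, this ambiguity does not affect the conclusion, but in a written-up version you should fix the convention explicitly so the intermediate identities are literally correct.
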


\begin{defn}{\rm \cite{Bai3} Let $(A,\nwarrow,\nearrow,\swarrow,\searrow)$ be a
quadri-algebra and let $r\in A\otimes A$. A set of
equations~(\meqref{eq:6.14}) and (\meqref{eq:6.15}) is called the
{\bf $Q$-equations} in $(A,\nwarrow,\nearrow,\swarrow,\searrow)$.
} \mlabel{de:7.4}
\end{defn}

\begin{remark}{\rm In the sense of Proposition~\mref{pp:7.3} (in terms of $\mathcal O$-operators), \qeq in
a quadri-algebra can be regarded as an analogue of the classical
Yang-Baxter equation in a Lie algebra~\cite{Bai1,K}, which led to
the introduction of \qeq in~\cite{Bai3}. }
\mlabel{rk:7.5}
\end{remark}

The following conclusion is obvious:

\begin{coro}
 Let
$(A,\nwarrow,\nearrow,\swarrow,\searrow)$ be a quadri-algebra and
let $r\in A\otimes A$ be skew-symmetric. Then the
comultiplications $\alpha,\beta,\tilde{\alpha}$ and
$\tilde{\beta}$ defined by
Eqs.~(\meqref{eq:6.1})-(\meqref{eq:6.4}) with
$r_{\nwarrow}=r_{\nearrow} =r_{\swarrow}=r_{\searrow}=r$
respectively make
$(A,\nwarrow,\nearrow,\swarrow,\searrow,\alpha,\beta,
\tilde{\alpha},\tilde{\beta})$ into a quadri-bialgebra if $r$ is a
solution of \qeq. \mlabel{co:7.6}
\end{coro}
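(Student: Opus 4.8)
The plan is to obtain this statement as a direct corollary of Proposition 7.1 and Proposition 7.3, with essentially no new computation. First I would unwind the terminology: by Definition 7.4, saying that $r$ is a solution of the $Q$-equation means exactly that $r$ satisfies Eqs. (6.14) and (6.15). The target, in view of Proposition 7.1, is to promote this pair of equations to the full system (6.14)--(6.19); once all six equations hold, Proposition 7.1 applies verbatim and tells us that the comultiplications $\alpha,\beta,\tilde\alpha,\tilde\beta$ defined by Eqs. (6.1)--(6.4) with $r_{\nwarrow}=r_{\nearrow}=r_{\swarrow}=r_{\searrow}=r$ make $(A,\nwarrow,\nearrow,\swarrow,\searrow,\alpha,\beta,\tilde\alpha,\tilde\beta)$ into a quadri-bialgebra.

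The bridge between ``(6.14)--(6.15)'' and ``(6.14)--(6.19)'' is Proposition 7.3. Since $r$ is assumed skew-symmetric, that proposition gives the equivalence of its conditions (1), (2) and (3), i.e. $r$ satisfies (6.14)--(6.15) if and only if it satisfies (6.16)--(6.17) if and only if it satisfies (6.18)--(6.19). Hence, starting from the hypothesis that (6.14)--(6.15) hold, I would conclude that (6.16)--(6.17) and (6.18)--(6.19) hold as well, so the whole list (6.14)--(6.19) is satisfied; feeding this into Proposition 7.1 finishes the proof.

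I do not expect any genuine obstacle. The only point that needs a moment's care is matching the standing hypotheses of the two propositions invoked: both Proposition 7.1 and Proposition 7.3 are stated for the same quadri-algebra $A$, the same skew-symmetric element $r\in A\otimes A$, and the same normalization $r_{\nwarrow}=r_{\nearrow}=r_{\swarrow}=r_{\searrow}=r$ of the comultiplications, so the chain of implications goes through without adjustment. Thus the proof collapses to a single sentence: by the equivalence in Proposition 7.3 a solution of the $Q$-equation satisfies Eqs. (6.14)--(6.19), whence the conclusion follows from Proposition 7.1.
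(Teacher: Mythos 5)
Your argument is correct and is precisely the reasoning the paper has in mind when it states this corollary as ``obvious'': by Definition 7.4 a solution of the $Q$-equation satisfies Eqs.\ (6.14)--(6.15), skew-symmetry and Proposition 7.3 upgrade this to Eqs.\ (6.14)--(6.19), and Proposition 7.1 then gives the quadri-bialgebra structure. No gaps; the hypotheses of the two cited propositions match as you note.
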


\begin{prop}
Let $(A,\nwarrow,\nearrow,\swarrow,\searrow)$ be a quadri-algebra
and let $r$ be a skew-symmetric solution of \qeq. Then the
quadri-algebra structure
$(\nwarrow_{\bullet},\nearrow_{\bullet},\swarrow_{\bullet},\searrow_{\bullet})$
on the Drinfeld $Q$-double $\mathcal{QD}(A)$ in
Theorem~\mref{thm:6.7} can be given as follows (for all $x\in
A,a^*,b^*\in A^*$): \begin{equation} a^*\nwarrow_{\bullet}b^*=
R_{\searrow}^*(T_r(a^*))b^*+ L_{\star}^*(T_r(b^*))a^*,
a^*\nearrow_{\bullet}b^*=
-R_{\vee}^*(T_r(a^*))b^*-L_{\prec}^*(T_r(b^*))a^*, \mlabel{eq:7.2}
\end{equation}\begin{equation}
a^*\swarrow_{\bullet}b^*=
-R_{\succ}^*(T_r(a^*))b^*-L_{\wedge}^*(T_r(b^*))a^*,
a^*\searrow_{\bullet}b^*=
R_{\star}^*(T_r(a^*))b^*+L_{\nwarrow}^*(T_r(b^*))a^*,
\mlabel{eq:7.3}
\end{equation}\begin{equation}
a^*\nwarrow_{\bullet}x=-T_r(L_{\star}^*(x)a^*)+ T_r(a^*)\nwarrow
x+L_{\star}^*(x)a^*,
\mlabel{eq:7.4}
\end{equation}\begin{equation}a^*\nearrow_{\bullet}x=T_r(L_{\prec}^*(x)a^*)+
T_r(a^*)\nearrow x-L_{\prec}^*(x)a^*,
\mlabel{eq:7.5}
\end{equation}\begin{equation}
a^*\swarrow_{\bullet}x=T_r(L_{\wedge}^*(x)a^*)+ T_r(a^*)\swarrow
x-L_{\wedge}^*(x)a^*,
\mlabel{eq:7.6}
\end{equation}\begin{equation}
a^*\searrow_{\bullet}x=-T_r(L_{\nwarrow}^*(x)a^*)+ T_r(a^*)\searrow
x+L_{\nwarrow}^*(x)a^*,
\mlabel{eq:7.7}
\end{equation}\begin{equation}
x\nwarrow_{\bullet}a^*=R_{\searrow}^*(x)a^*+ x\nwarrow
T_r(a^*)-T_r(R_{\searrow}^*(x)a^*),
\mlabel{eq:7.8}
\end{equation}\begin{equation}
x\nearrow_{\bullet}a^*=-R_{\vee}^*(x)a^*+ x\nearrow
T_r(a^*)+T_r(R_{\vee}^*(x)a^*),
\mlabel{eq:7.9}
\end{equation}\begin{equation}
x\swarrow_{\bullet}a^*=-R_{\succ}^*(x)a^*+ x\swarrow
T_r(a^*)+T_r(R_{\succ}^*(x)a^*),
\mlabel{eq:7.10}
\end{equation}\begin{equation}
x\searrow_{\bullet}a^*=R_{\star}^*(x)a^*+ x\searrow
T_r(a^*)-T_r(R_{\star}^*(x)a^*).
\mlabel{eq:7.11}
\end{equation}
\mlabel{pp:7.7}
\end{prop}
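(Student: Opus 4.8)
By Corollary 7.6, the comultiplications defined by Eqs.\ (6.1)--(6.4) with $r_{\nwarrow}=r_{\nearrow}=r_{\swarrow}=r_{\searrow}=r$ make $A$ into a quadri-bialgebra, so the Drinfeld $Q$-double $\mathcal{QD}(A)=A\bowtie A^*$ of Theorem 6.7 is defined. The plan is to start from the description of its quadri-algebra structure obtained in the proof of Theorem 6.7 and to unwind the ``dual'' ingredients using that the cooperations of $A$ are coboundary, given by a common skew-symmetric $r=\sum_i u_i\otimes v_i$. From that proof, in $\mathcal{QD}(A)$ the product of two elements of $A$ is the original one; the product of two elements of $A^*$ is the transposed structure $\nwarrow_*=\alpha^*$, $\nearrow_*=\beta^*$, $\swarrow_*=\tilde{\alpha}^*$, $\searrow_*=\tilde{\beta}^*$; and the mixed products are
\[a^*\nwarrow_{\bullet}x=R_{\searrow_*}^*(a^*)x+L_{\star}^*(x)a^*,\qquad x\nwarrow_{\bullet}a^*=R_{\searrow}^*(x)a^*+L_{\star_*}^*(a^*)x,\]
together with the analogous formulas for $\nearrow,\swarrow,\searrow$, whose operators are read off from the dual bimodule of Lemma 4.3. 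Throughout, two elementary facts are used: $T_r(w^*)=\sum_i\langle w^*,v_i\rangle u_i$ and, since $r$ is skew-symmetric, $\sum_i\langle w^*,u_i\rangle v_i=-T_r(w^*)$; equivalently, inside a pairing the two legs of $r$ may be swapped at the cost of an overall sign.

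First I would prove Eqs.\ (7.2)--(7.3). For each $\diamond$, the product $a^*\diamond_{\bullet}b^*$ is the transposed operation $\diamond_*$ on $A^*$, so $\langle a^*\diamond_{\bullet}b^*,x\rangle=\langle a^*\otimes b^*,(\text{the relevant cooperation})(x)\rangle$ for $x\in A$. Substituting the corresponding formula from (6.1)--(6.4) and collecting, the ``$R\otimes 1$''-type term collapses via $\sum_i\langle b^*,v_i\rangle u_i=T_r(b^*)$ to $\pm\langle L_{?}^*(T_r(b^*))a^*,x\rangle$, while the ``$1\otimes L$''-type term collapses, after moving $a^*$ onto the other leg of $r$, to $\pm\langle R_{?}^*(T_r(a^*))b^*,x\rangle$. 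Comparing operations with (7.2)--(7.3) finishes these four identities.

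Next I would treat the mixed products (7.4)--(7.11). According to the proof of Theorem 6.7, each of them equals a summand of the form $\pm L_{?}^*(x)a^*$ or $\pm R_{?}^*(x)a^*$ --- already present verbatim in the claim --- plus a summand still involving the transposed structure on $A^*$, namely an operator $R_{\diamond_*}^*(a^*)$ or $L_{\diamond_*}^*(a^*)$ (with $\diamond_*\in\{\nwarrow_*,\nearrow_*,\swarrow_*,\searrow_*,\wedge_*,\vee_*,\prec_*,\succ_*,\star_*\}$) applied to $x$. I rewrite that second summand by pairing it against $b^*\in A^*$: using $\langle R_{\diamond_*}^*(a^*)x,b^*\rangle=\langle x,b^*\diamond_* a^*\rangle$ (resp.\ $\langle L_{\diamond_*}^*(a^*)x,b^*\rangle=\langle x,a^*\diamond_* b^*\rangle$) and the fact that $\diamond_*$ is the transpose of the corresponding combination of $\alpha,\beta,\tilde{\alpha},\tilde{\beta}$, this becomes the pairing of $b^*\otimes a^*$ (resp.\ $a^*\otimes b^*$) with that combination evaluated at $x$. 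The key observation is that, thanks to the identities $\wedge=\nwarrow+\nearrow$, $\vee=\swarrow+\searrow$, $\prec=\nwarrow+\swarrow$, $\succ=\nearrow+\searrow$ and $\star=\nwarrow+\nearrow+\swarrow+\searrow$, every combination of $\alpha,\beta,\tilde{\alpha},\tilde{\beta}$ that arises is itself a one-operator coboundary $(\pm1\otimes L_{?}(x)\pm R_{?}(x)\otimes 1)r$; for instance $(\alpha+\beta+\tilde{\alpha}+\tilde{\beta})(x)=(-1\otimes L_{\nwarrow}(x)+R_{\searrow}(x)\otimes 1)r$, which is what is needed for $x\nwarrow_{\bullet}a^*$ and $a^*\searrow_{\bullet}x$. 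Applying the two facts above to this coboundary, one part collapses to $\pm T_r(a^*)\diamond x$ (resp.\ $\pm x\diamond T_r(a^*)$) and the other to $\mp T_r(L_{?}^*(x)a^*)$ (resp.\ $\mp T_r(R_{?}^*(x)a^*)$); restoring the summand set aside yields (7.4)--(7.11).

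The argument is purely computational, and the only real work is the bookkeeping: keeping track of signs and of exactly which of $\nwarrow,\nearrow,\swarrow,\searrow,\wedge,\vee,\prec,\succ,\star$ each transposed operation produces, by lining up (6.1)--(6.4), the operation identities (2.10)--(2.12), and the mixed-product formulas extracted from the proof of Theorem 6.7. Since every component identity in (7.2)--(7.11) succumbs to the same three-step calculation --- pair against a dual vector, rewrite the cooperation-combination as a one-operator coboundary, apply the two $T_r$-identities --- in the write-up I would carry out one representative case in full, say $x\nwarrow_{\bullet}a^*$ (Eq.\ (7.8), which uses $(\alpha+\beta+\tilde{\alpha}+\tilde{\beta})(x)=(-1\otimes L_{\nwarrow}(x)+R_{\searrow}(x)\otimes 1)r$), and note that the remaining cases are entirely analogous.
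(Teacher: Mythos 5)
Your proposal is correct and follows essentially the same route as the paper: both read off the quadri-algebra structure of $\mathcal{QD}(A)$ from Theorem 6.7 and unwind it through $T_r$ and the skew-symmetry of $r$, first for the $A^*$-products and then for the mixed products. The only differences are cosmetic: you work basis-free and, for the mixed products, go back to the coboundary formulas (6.1)--(6.4) (using the observation that each arising combination of $\alpha,\beta,\tilde{\alpha},\tilde{\beta}$ is again a single coboundary), whereas the paper chooses a basis with structure constants and instead substitutes the already-established dual-product formulas (7.2)--(7.3) into the operators $R_{\searrow_*}^*(a^*)x$, etc.
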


\begin{proof}
Let $\{e_1,...,e_n\}$ be a basis of $A$ and let
$\{e_1^*,...,e_n^*\}$ be the dual basis. Suppose that
$$e_i\nwarrow e_j=\sum_{i,j}c_{ij}^ke_k,e_i\nearrow
e_j=\sum_{i,j}d_{ij}^ke_k, e_i\swarrow
e_j=\sum_{i,j}\tilde{c}_{ij}^ke_k, e_i\searrow e_j=
\sum_{i,j}\tilde{d}_{ij}^ke_k,$$ and $r=\sum_{i,j}a_{ij}e_i\otimes
e_j$, where $a_{ij}=-a_{ji}$. Then $T_r(e_i^*)=\sum_ka_{ki}e_k$.
for all $k,l$, we have
\begin{eqnarray*}
e_k^*\nwarrow_*e_l^*&=&\sum_s\langle e_k^*\otimes
e_l^*,\alpha(e_s)\rangle e_s^*\\
&=&\sum_{t,s}[-a_{kt}\tilde{d}_{st}^l+a_{tl}(c_{ts}^k+d_{ts}^k+
\tilde{c}_{ts}^k+\tilde{d}_{ts}^k)]e_s^*\\
&=&R_{\searrow}^*(T_r(e_k^*))e_l^*+L_{\star}^*(T_r(e_l^*))e_k^*.
\end{eqnarray*}
Hence for all $a^*,b^*\in A^*$, we have that
$a^*\nwarrow_{\bullet}b^*= R_{\searrow}^*(T_r(a^*))b^*+
L_{\star}^*(T_r(b^*))a^*$. Similarly $a^*\nearrow_{\bullet}b^*=
-R_{\vee}^*(T_r(a^*))b^*-L_{\prec}^*(T_r(b^*))a^*$. So
Eq.~(\mref{eq:7.2}) holds. Eq.~(\mref{eq:7.3}) is proved in a
similar way. On the other hand, we have
\begin{eqnarray*}
R_{\searrow}^*(e_k^*)e_l&=&\sum_s\langle
e_l,e_s^*\searrow_*e_k^*\rangle e_s\\
&=&\sum_s\langle
e_l,R_{\star}^*(T_r(e_s^*))e_k^*+
L_{\nwarrow}^*(T_r(e_k^*))e_s^*\rangle e_s\\
&=&\sum_s\langle e_s^*,-T_r(L_{\star}^*(e_l)e_k^*)+
T_r(e_k^*)\nwarrow e_l\rangle
e_s\\
&=&-T_r(L_{\star}^*(e_l)e_k^*)+T_r(e_k^*)\nwarrow e_l.
\end{eqnarray*}
Thus, $e_k^*\nwarrow_{\bullet}e_l=R_{\searrow}^*(e_k^*)e_l+
L_{\star}^*(e_l)e_k^*=-T_r(L_{\star}^*(e_l)e_k^*)+
T_r(e_k^*)\nwarrow e_l+L_{\star}^*(e_l)e_k^*$. Therefore
Eq.~(\meqref{eq:7.4}) holds.
Eqs.~(\meqref{eq:7.5})-(\meqref{eq:7.11}) are verified in the same
way.
\end{proof}

\begin{prop} {\rm \cite{Bai3}} Let $(A,\nwarrow,\nearrow,\swarrow,\searrow)$ be a
quadri-algebra and let $r\in A\otimes A$. Suppose that $r$ is
skew-symmetric and nondegenerate. Then $r$ is a solution of
\qeq if and only if the inverse of the isomorphism
$A^*\rightarrow A$ induced by $r$, regarded as a bilinear form
$\omega$ on $A$, satisfies \begin{equation} \omega(x,y\wedge
z)=-\omega(x\swarrow y,z)+\omega(z\succ x,y),\mlabel{eq:7.12}
\end{equation}
\begin{equation}\omega(x,y\vee z)=\omega(x\prec y,z)-\omega(z\nearrow
x,y),\;\;\forall x,y,z\in A.
\mlabel{eq:7.13}
\end{equation}
\mlabel{pp:7.8}
\end{prop}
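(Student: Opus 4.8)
The plan is to derive this proposition from the $\mathcal{O}$-operator characterization of the solutions of the $Q$-equation (Proposition 7.3 above, the equivalence of conditions (1) and (5)) by transporting the defining identities of that $\mathcal{O}$-operator along the isomorphism $T_r$. Since $r$ is nondegenerate, $T_r:A^*\to A$ is a linear isomorphism, and the bilinear form $\omega$ appearing in the statement is, by definition, $\omega(x,y)=\langle T_r^{-1}(x),y\rangle$ for $x,y\in A$; because $r$ is skew-symmetric, so is $\omega$. By Proposition 7.3, $r$ is a solution of the $Q$-equation if and only if $T_r$ is an $\mathcal{O}$-operator of $A_v=(A,\wedge,\vee)$ associated to the bimodule $(A^*,-R_{\swarrow}^*,L_{\succ}^*,R_{\prec}^*,-L_{\nearrow}^*)$, i.e. for all $a^*,b^*\in A^*$,
\begin{eqnarray*}
T_r(a^*)\wedge T_r(b^*)&=&T_r\bigl(-R_{\swarrow}^*(T_r(a^*))b^*+L_{\succ}^*(T_r(b^*))a^*\bigr),\\
T_r(a^*)\vee T_r(b^*)&=&T_r\bigl(R_{\prec}^*(T_r(a^*))b^*-L_{\nearrow}^*(T_r(b^*))a^*\bigr).
\end{eqnarray*}

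Next I would rewrite these two identities purely in terms of $\omega$. Setting $x=T_r(a^*)$ and $y=T_r(b^*)$, which range independently over $A$ since $T_r$ is bijective, I would apply $T_r^{-1}$ to both sides and evaluate the resulting equalities in $A^*$ against an arbitrary $z\in A$. Using $\langle R_{\diamond}^*(u)v^*,z\rangle=\langle v^*,z\diamond u\rangle$ and $\langle L_{\diamond}^*(u)v^*,z\rangle=\langle v^*,u\diamond z\rangle$ (Eq. (1.7)), together with $\langle T_r^{-1}(x),z\rangle=\omega(x,z)$, the two $\mathcal{O}$-operator identities become, equivalently,
\begin{eqnarray*}
\omega(x\wedge y,z)&=&-\omega(y,z\swarrow x)+\omega(x,y\succ z),\\
\omega(x\vee y,z)&=&\omega(y,z\prec x)-\omega(x,y\nearrow z),
\end{eqnarray*}
for all $x,y,z\in A$. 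Conversely, since $\omega$ determines $T_r^{-1}$ and $z$ is arbitrary, these identities force back the $\mathcal{O}$-operator condition, so the chain of ``if and only if'' is preserved throughout.

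Finally, I would invoke the skew-symmetry of $\omega$ and the cyclic relabeling sending $x\mapsto y$, $y\mapsto z$, $z\mapsto x$, which moves the product $x\wedge y$ (resp. $x\vee y$) into the second argument slot: applying skew-symmetry to the left-hand side, relabeling, and then applying skew-symmetry to the right-hand terms rewrites the first identity as $\omega(x,y\wedge z)=-\omega(x\swarrow y,z)+\omega(z\succ x,y)$ and the second as $\omega(x,y\vee z)=\omega(x\prec y,z)-\omega(z\nearrow x,y)$, which are exactly Eqs. (7.12)--(7.13). I expect the only genuine obstacle to be the sign and index bookkeeping in this last step: one must apply skew-symmetry to precisely the right factors and verify that the relabeling sends each of the three terms to its asserted position. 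Everything else is a routine unwinding of the definitions of $T_r$, of the dual maps $R_{\diamond}^*,L_{\diamond}^*$, and of the $\mathcal{O}$-operator condition.
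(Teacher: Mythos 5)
Your proof is correct. A point of comparison is moot here: the paper gives no proof of this proposition at all — it is quoted from \cite{Bai3} — so the only question is whether your argument stands on its own, and it does, granting Proposition 7.3, which is likewise a result the paper states without proof (so no circularity arises within this paper's logic). Concretely, with $\omega(x,y)=\langle T_r^{-1}(x),y\rangle$ (skew-symmetric because $r$ is), the $\mathcal{O}$-operator identities for $T_r$ on $A_v$ with respect to the bimodule $(A^*,-R_{\swarrow}^*,L_{\succ}^*,R_{\prec}^*,-L_{\nearrow}^*)$ transport, after applying $T_r^{-1}$ and pairing with an arbitrary $z$, exactly to $\omega(x\wedge y,z)=-\omega(y,z\swarrow x)+\omega(x,y\succ z)$ and $\omega(x\vee y,z)=\omega(y,z\prec x)-\omega(x,y\nearrow z)$; the cyclic relabeling $x\mapsto y$, $y\mapsto z$, $z\mapsto x$ followed by skew-symmetry on each term then gives precisely Eqs. (7.12)--(7.13), and every step (bijectivity of $T_r$, nondegeneracy of the canonical pairing, skew-symmetry, relabeling) is reversible, so the equivalence is preserved. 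Two cosmetic slips only: the dual-map identity you invoke is Eq. (1.6), not (1.7), and the statement's convention for $\omega$ could equally be read as $\omega(x,y)=\langle T_r^{-1}(y),x\rangle$, but by skew-symmetry the two readings differ by an overall sign, under which Eqs. (7.12)--(7.13) are invariant, so nothing changes.
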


\begin{defn}{\rm   Let $(A,\nwarrow,\nearrow,\swarrow,\searrow)$ be a
quadri-algebra. A bilinear form $\omega:A\otimes A\rightarrow
\mathbb F$ is called a {\bf $2$-cocycle} on $A$ if it satisfies
Eqs.~(\meqref{eq:7.12}) and (\meqref{eq:7.13}). } \mlabel{de:7.9}
\end{defn}

It is easy to show that if $\omega$ is a $2$-cocycle on a
quadri-algebra $A$, then for all $x,y\in A$,
$\mathfrak{B}(x,y):=\omega(x,y)+\omega(y,x)$ is a $2$-cocycle on
the associated vertical dendriform algebra $A_v$.

\begin{prop} Let $(A,\nwarrow,\nearrow,\swarrow,\searrow,\alpha,\beta,
\tilde{\alpha},\tilde{\beta})$ be a  quadri-bialgebra obtained from
a skew-symmetric solution of \qeq.  Let
$(A,A^*,R_{\searrow}^*,L_{\star}^*,-R_{\vee}^*, -L_{\prec}^*$,
$-R_{\succ}^*$, $-L_{\wedge}^*,R_{\star}^*,
L_{\nwarrow}^*,R_{\searrow_*}^*,L_{\star_*}^*$, $ -R_{\vee_*}^*,
-L_{\prec_*}^*,-R_{\succ_*}^*,-L_{\wedge_*}^*,R_{\star_*}^*,
L_{\nwarrow_*}^*)$ be the corresponding matched pair of
quadri-algebras, where the subscript $\ast$ denotes the
quadri-algebra structure on $A^*$. Set
\begin{equation}
A\ltimes A^*=:A\ltimes_{R_{\searrow}^*,L_{\star}^*,-R_{\vee}^*,
-L_{\prec}^*,-R_{\succ}^*,-L_{\wedge}^*,R_{\star}^*,
L_{\nwarrow}^*}A^*.
\mlabel{eq:7.14}
\end{equation}
\begin{enumerate}
\item
$A\bowtie_{R_{\searrow}^*,L_{\star}^*,-R_{\vee}^*,
-L_{\prec}^*,-R_{\succ}^*,-L_{\wedge}^*,R_{\star}^*,L_{\nwarrow}^*}^{R_{\searrow_*}^*,
L_{\star_*}^*,-R_{\vee_*}^*,-L_{\prec_*}^*,-R_{\succ_*}^*,-L_{\wedge_*}^*,R_{\star_*}^*,
L_{\nwarrow_*}^*}A^*$ is isomorphic to $A\ltimes A^*$
 as quadri-algebras.
\mlabel{it:7.10a} \item The skew-symmetric solutions of \qeq in
$A$ are in one-to-one correspondence with linear maps
$T_r:A^*\rightarrow A$ whose graphs are the Lagrangian
quadri-subalgebras of $A\ltimes A^*$
 with respect to the bilinear form $\frak B_S$ defined by Eq.~(\meqref{eq:3.2}). Here
the graph of a linear map $T:A^*\rightarrow A$ is defined as
$\mgraph(T):=\{(T(a^*),a^*)|a^*\in A^*\}\subset A\ltimes A^*$.
 \mlabel{it:7.10b}
\end{enumerate}
\mlabel{pp:7.10}
\end{prop}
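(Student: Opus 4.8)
The plan is to derive both parts from the single fact that, since $r$ is a skew-symmetric solution of $Q$-equation, the induced map $T_r\colon A^*\to A$ is an $\mathcal{O}$-operator of $A$ associated to the bimodule $(A^*,R_{\searrow}^*,L_{\star}^*,-R_{\vee}^*,-L_{\prec}^*,-R_{\succ}^*,-L_{\wedge}^*,R_{\star}^*,L_{\nwarrow}^*)$ of $A$ (a bimodule by Lemma 4.3); this is the equivalence of conditions (1) and (4) in Proposition 7.3. Abbreviate this bimodule structure by $(l_\circ,r_\circ)_{\circ\in\{\nwarrow,\nearrow,\swarrow,\searrow\}}$, so that $A\ltimes A^*$ is precisely the semidirect sum of Eq. (4.10) for these data and the $\mathcal{O}$-operator identity of Definition 7.2 reads $T_r(a^*)\circ T_r(b^*)=T_r\bigl(l_\circ(T_r(a^*))b^*+r_\circ(T_r(b^*))a^*\bigr)$ for all $a^*,b^*\in A^*$ and every $\circ\in\{\nwarrow,\nearrow,\swarrow,\searrow\}$.

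For part (1) I would exhibit the explicit isomorphism. Define $\varphi\colon A\ltimes A^*\to A\bowtie A^*=\mathcal{QD}(A)$ by $\varphi(x+a^*):=\bigl(x-T_r(a^*)\bigr)+a^*$ for $x\in A$, $a^*\in A^*$; this is a linear bijection, with inverse $y+b^*\mapsto\bigl(y+T_r(b^*)\bigr)+b^*$. To check that $\varphi$ intertwines each of $\nwarrow,\nearrow,\swarrow,\searrow$ I would compare the structure of $A\ltimes A^*$ given by Eq. (4.10) with the explicit products of $\mathcal{QD}(A)$ recorded in Eqs. (7.2)--(7.11) (valid here because $r$ solves $Q$-equation), splitting into the four sectors $A\times A$, $A\times A^*$, $A^*\times A$, $A^*\times A^*$. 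On $A\times A$ both sides are the multiplication of $A$; on the two mixed sectors the identity $\varphi(\mu\circ\nu)=\varphi(\mu)\circ_{\bullet}\varphi(\nu)$ is forced formally by Eqs. (7.4)--(7.11) and the definition of $\varphi$, with no further input; and on the sector $A^*\times A^*$, where $a^*\circ b^*=0$ in $A\ltimes A^*$, expanding $\bigl(-T_r(a^*)+a^*\bigr)\circ_{\bullet}\bigl(-T_r(b^*)+b^*\bigr)$ by Eqs. (7.2)--(7.3) together with (7.4)--(7.11) collapses, after cancellation, to exactly $-T_r(a^*)\circ T_r(b^*)+T_r\bigl(l_\circ(T_r(a^*))b^*+r_\circ(T_r(b^*))a^*\bigr)$, which vanishes by the $\mathcal{O}$-operator identity above. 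Hence $\varphi$ is an isomorphism of quadri-algebras; moreover $\mathfrak{B}_S(\varphi(\mu),\varphi(\nu))=\mathfrak{B}_S(\mu,\nu)$, the correction terms amounting to $-\langle a^*\otimes b^*,\,r+\tau r\rangle=0$ by skew-symmetry of $r$ (here $\tau$ is the flip of $A\otimes A$), so $\varphi$ respects the form (3.2) and carries $graph(T_r)\subset A\ltimes A^*$ onto $A^*\subset\mathcal{QD}(A)$.

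For part (2) I would use the dictionary of Eq. (1.4): every linear map $T\colon A^*\to A$ equals $T_r$ for a unique $r\in A\otimes A$, and $r\leftrightarrow T_r$ is a bijection. For such a $T_r$, first $graph(T_r)=\{(T_r(a^*),a^*)\mid a^*\in A^*\}$ is isotropic for $\mathfrak{B}_S$ iff $\mathfrak{B}_S\bigl((T_r(a^*),a^*),(T_r(b^*),b^*)\bigr)=\langle a^*\otimes b^*,\,r+\tau r\rangle$ vanishes for all $a^*,b^*$, i.e.\ iff $r$ is skew-symmetric; and since $\dim graph(T_r)=\dim A=\frac12\dim(A\oplus A^*)$ and $\mathfrak{B}_S$ is nondegenerate, an isotropic graph is automatically maximal isotropic, hence Lagrangian. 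Second, reading off Eq. (4.10), $graph(T_r)$ is closed under each $\circ$ iff $T_r(a^*)\circ T_r(b^*)=T_r\bigl(l_\circ(T_r(a^*))b^*+r_\circ(T_r(b^*))a^*\bigr)$ for all $a^*,b^*$ and every $\circ$, which by Definition 7.2 is exactly the assertion that $T_r$ is an $\mathcal{O}$-operator associated to the above bimodule; by Proposition 7.3 (equivalence of (1) and (4)), for skew-symmetric $r$ this holds iff $r$ solves $Q$-equation. Combining the two observations, $graph(T_r)$ is a Lagrangian sub-quadri-algebra of $A\ltimes A^*$ if and only if $r$ is a skew-symmetric solution of $Q$-equation, which together with $r\leftrightarrow T_r$ gives the stated one-to-one correspondence.

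The step I expect to be the main obstacle is the verification in part (1): one must get the sign in $\varphi$ right (the $-T_r(a^*)$, not $+T_r(a^*)$) so that all four sectors close up at once, and then --- for all four operations simultaneously --- carry out the cancellations in the $A^*\times A^*$ sector, so that the many terms coming from Eqs. (7.2)--(7.11) regroup into precisely the $\mathcal{O}$-operator defect and nothing else. Part (2) is comparatively soft once Proposition 7.3 is available; its only non-formal ingredient is the standard fact that a half-dimensional isotropic subspace of a nondegenerate bilinear form is Lagrangian.
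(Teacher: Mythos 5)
Your proof is correct and follows essentially the same route as the paper: your map $\varphi(x+a^*)=(x-T_r(a^*))+a^*$ is precisely the inverse of the paper's isomorphism $\theta(x,a^*)=(T_r(a^*)+x,a^*)$, and your part (2) rests, as in the paper, on the observation that the graph is a sub-quadri-algebra of $A\ltimes A^*$ exactly when $T_r$ satisfies the $\mathcal{O}$-operator identity, combined with Proposition 7.3 and the identification of isotropy of the graph with skew-symmetry of $r$. The only cosmetic differences are that you verify the homomorphism property via the explicit formulas (7.2)--(7.11) where the paper declares it straightforward, and that you argue Lagrangian-ness directly from half-dimensionality rather than transporting it through the isomorphism; neither affects the substance.
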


\begin{proof} We denote the quadri-algebra structure on
$A\ltimes A^*$ by
$\nwarrow_{\dagger},\nearrow_{\dagger},\swarrow_{\dagger},
\searrow_{\dagger}$. Define a linear map
$\theta:A\bowtie_{R_{\searrow}^*,L_{\star}^*,-R_{\vee}^*,
-L_{\prec}^*,-R_{\succ}^*,-L_{\wedge}^*,R_{\star}^*,L_{\nwarrow}^*}^{R_{\searrow_*}^*,
L_{\star_*}^*,-R_{\vee_*}^*,-L_{\prec_*}^*,-R_{\succ_*}^*,-L_{\wedge_*}^*,R_{\star_*}^*,
L_{\nwarrow_*}^*}A^*\rightarrow A\ltimes A^*$ by
$$\theta(x,a^*):=(T_r(a^*)+x,a^*),\forall x\in A,a^*\in A^*.$$ It is
straightforward to check that $\theta$ is a homomorphism of
quadri-algebras. Moreover, $\theta$ is bijective. So
Item~(\meqref{it:7.10a}) holds.

Suppose that $r$ is a skew-symmetric solution of \qeq. Then by
Item~(\meqref{it:7.10a}) we know that $\theta(A^*)=\mgraph(T_r)$
and $\theta(A)=A$ are isotropic complementary quadri-subalgebras
of $A\ltimes A^*$ and dual to each others with respect to the
bilinear form $\frak B_S$. 
So
$\mgraph(T_r)$ is a Lagrangian quadri-subalgebra of $A\ltimes
A^*$. Conversely, let $T_r:A^*\rightarrow A$ be a linear map whose
graph $\mgraph(T_r)$ is a Lagrangian quadri-subalgebra of
$A\ltimes A^*$. Since $\mgraph(T_r)$ is Lagrangian, $r$ is
skew-symmetric. Moreover, let $a^*,b^*\in A^*$. Since
$\mgraph(T_r)$ is a quadri-subalgebra of $A\ltimes A^*$, we have
{\small \begin{eqnarray*}
(T_r(a^*),a^*)\nwarrow_{\dagger}(T_r(b^*),b^*)&=&(T_r(a^*)\nwarrow
T_r(b^*),R_{\searrow}^*(T_r(a^*))b^*+L_{\star}^*(T_r(b^*))a^*)\\
\smallskip
&=&(T_r(R_{\searrow}^*(T_r(a^*))b^*+L_{\star}^*(T_r(b^*))a^*),
R_{\searrow}^*(T_r(a^*))b^*+L_{\star}^*(T_r(b^*))a^*).
\end{eqnarray*}}
Therefore, $T_r(a^*)\nwarrow
T_r(b^*)=T_r(R_{\searrow}^*(T_r(a^*))b^*+L_{\star}^*(T_r(b^*))a^*)$.
Similarly we have
$$T_r(a^*)\nearrow T_r(b^*)=
T_r(-R_{\vee}^*(T_r(a^*))b^*-L_{\prec}^*(T_r(b^*))a^*),$$
$$T_r(a^*)\swarrow T_r(b^*)=T_r(-R_{\succ}^*(T_r(a^*))b^*-
L_{\wedge}^*(T_r(b^*))a^*),$$  $$T_r(a^*)\searrow T_r(b^*)=
T_r(R_{\star}^*(T_r(a^*))b^*+L_{\nwarrow}^*(T_r(b^*))a^*).$$ So by Proposition~\mref{pp:7.3}, we conclude that $r$ is a skew-symmetric solution of \qeq.
\end{proof}

\section{Construction of linear operators on double spaces of quadri-algebras}
\mlabel{sec:double}

An interesting (and important) feature of dendriform algebras,
quadri-algebras and other similar algebra structures mentioned in
\cite{EG1} is that they have close relations with various linear
operators in combinatorics~\cite{Ag1,Ag2,Ag3, AL,
Bax,E1,E2,EG1,R1,R2}.

\begin{defn}{\rm   Let $A$ be a vector space with a set
 of bilinear operations $\Omega:=\{\ast_{n}: A\otimes
A\rightarrow A, n=1,...,m\}$. A linear operator $P$ on $A$ is called a {\bf
Rota-Baxter operator of weight $\lambda$} ($\in\mathbb F$) if, for
each $\ast\in\Omega$, we have \begin{equation} P(x)\ast
P(y)=P(P(x)\ast y+x\ast P(y)+\lambda x\ast y),   \forall x,y\in
A.
\mlabel{eq:8.1}
\end{equation}
A linear operator $N$ on $A$ is called a {\bf
Nijenhuis operator} if, for each $\ast\in\Omega$, we have
\begin{equation} N(x)\ast N(y)=N(N(x)\ast y+x\ast N(y)-N(x\ast y)),
\forall x,y\in A.
\mlabel{eq:8.2}
\end{equation}}
\mlabel{de:8.1}
\end{defn}

We have the following relationship between Rota-Baxter operators of weight $\lambda$ and Nijenhuis operators.

\begin{prop} {\rm ~\cite{E1}}  With the notations in Definition 8.1, if
$N:A\rightarrow A$ is a Nijenhuis operator on $A$ satisfying
$N^2=\lambda^2{\rm id}$, then $P:=$${-\lambda{\rm id}-N}\over 2$ is
a Rota-Baxter operator of weight $\lambda$ (on $A$).
\mlabel{pp:8.2}
\end{prop}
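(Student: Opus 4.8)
The plan is to verify the Rota-Baxter identity (8.1) directly: substitute $P=\tfrac{-\lambda\,\mathrm{id}-N}{2}$ into both sides, expand using the bilinearity of each $\ast\in\Omega$ together with the linearity of $N$, and then reduce everything to the Nijenhuis identity (8.2) combined with the hypothesis $N^{2}=\lambda^{2}\,\mathrm{id}$. Since both defining conditions quantify over all $\ast\in\Omega$, it suffices to fix one such $\ast$ and work with it throughout.

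First I would expand the left-hand side of (8.1). By bilinearity,
\[
P(x)\ast P(y)=\tfrac14\bigl[\lambda^{2}(x\ast y)+\lambda\bigl(N(x)\ast y\bigr)+\lambda\bigl(x\ast N(y)\bigr)+N(x)\ast N(y)\bigr].
\]
Next I would simplify the argument of $P$ on the right-hand side: again by bilinearity,
\[
P(x)\ast y+x\ast P(y)+\lambda(x\ast y)=-\lambda(x\ast y)-\tfrac12\,N(x)\ast y-\tfrac12\,x\ast N(y)+\lambda(x\ast y),
\]
so the two $\lambda(x\ast y)$ terms cancel and the argument equals $-\tfrac12\bigl(N(x)\ast y+x\ast N(y)\bigr)$. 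Applying $P$ to this and using the linearity of $N$ gives
\[
P\bigl(P(x)\ast y+x\ast P(y)+\lambda(x\ast y)\bigr)=\tfrac14\bigl[\lambda\bigl(N(x)\ast y\bigr)+\lambda\bigl(x\ast N(y)\bigr)+N\bigl(N(x)\ast y\bigr)+N\bigl(x\ast N(y)\bigr)\bigr].
\]

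Comparing the two displayed expressions and cancelling the common terms $\tfrac{\lambda}{4}\bigl(N(x)\ast y\bigr)$ and $\tfrac{\lambda}{4}\bigl(x\ast N(y)\bigr)$, one sees that (8.1) is equivalent to
\[
N\bigl(N(x)\ast y\bigr)+N\bigl(x\ast N(y)\bigr)=N(x)\ast N(y)+\lambda^{2}(x\ast y).
\]
Finally I would deduce this last equation from the Nijenhuis identity (8.2): expanding its right-hand side by linearity of $N$ gives $N(x)\ast N(y)=N\bigl(N(x)\ast y\bigr)+N\bigl(x\ast N(y)\bigr)-N^{2}(x\ast y)$, and substituting $N^{2}=\lambda^{2}\,\mathrm{id}$ yields precisely the required equation. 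As $\ast$ was arbitrary, $P$ satisfies (8.1) for every $\ast\in\Omega$, hence is a Rota-Baxter operator of weight $\lambda$. There is no genuine obstacle here: the only thing to watch is the bookkeeping of the eight terms on each side, and the conceptual point is that the hypothesis $N^{2}=\lambda^{2}\,\mathrm{id}$ is exactly what converts the Nijenhuis relation into the Rota-Baxter relation.
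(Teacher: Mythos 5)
Your computation is correct, and it is exactly the direct verification the paper has in mind when it dismisses the proof as ``straightforward'': expand $P=\tfrac{-\lambda\,\mathrm{id}-N}{2}$ in the identity (8.1) and reduce, via $N^{2}=\lambda^{2}\,\mathrm{id}$, to the Nijenhuis relation (8.2). No discrepancy with the paper's (omitted) argument.
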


By a direct computation, we have
\begin{prop} Let $(A,\nwarrow,\nearrow,\swarrow,\searrow,\alpha,\beta,
\tilde{\alpha},\tilde{\beta})$ be a quadri-bialgebra, which is
induced from a skew-symmetric solution $r$ of \qeq. If, in
addition, $r$ is nondegenerate, then for all $x\in A,a^*\in A^*$,
\begin{equation} N_{\lambda_1,\lambda_2,\lambda_3,\lambda_4}((x,a^*)):=(\lambda_1T_r(a^*)+
\lambda_2x,\lambda_3T_r^{-1}(x)+\lambda_4a^*)
\mlabel{eq:8.3}
\end{equation} is a
Nijenhuis operator on the Drinfeld $Q$-double $\mathcal{QD}(A)$,
where $\lambda_i\in\mathbb F$, $i=1,2,3,4$.
\mlabel{pp:8.3}
\end{prop}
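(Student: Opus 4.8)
The plan is to reduce everything to an elementary \emph{splitting lemma}: if a quadri-algebra $Q$ is a vector-space direct sum $Q=Q_1\oplus Q_2$ of two sub-quadri-algebras, then for all $a,b\in\mathbb F$ the operator $aP_1+bP_2$, where $P_i\colon Q\to Q_i$ are the projections, has vanishing Nijenhuis torsion for each of the four operations. Writing $\mathcal{T}_N(\mu,\nu):=N(\mu)\circ N(\nu)-N\bigl(N(\mu)\circ\nu+\mu\circ N(\nu)-N(\mu\circ\nu)\bigr)$, this is a one-line computation: by bilinearity take $\mu\in Q_i$, $\nu\in Q_j$ and write $\mu\circ\nu=u_1+u_2$ with $u_k\in Q_k$; since $N$ acts as the scalar $a$ on $Q_1$ and $b$ on $Q_2$ while each $Q_k$ is $\circ$-closed, all terms collapse. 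I will also use the trivial identities $\mathcal{T}_{cN}=c^2\mathcal{T}_N$ and $\mathcal{T}_{N+c\,{\rm id}}=\mathcal{T}_N$, so that being a Nijenhuis operator is unchanged by rescaling $N$ or by adding a scalar operator.

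The geometric input is that $\mathcal{QD}(A)$ carries a whole pencil of sub-quadri-algebras. Here $A$ is a sub-quadri-algebra of $\mathcal{QD}(A)$ (its products agree with those of $\mathcal{QD}(A)$), $A^*$ is a sub-quadri-algebra by the formulas of Proposition 7.7 (Eqs.\ (7.2)--(7.3)), and, since each $Q_i^j$ is homogeneous of degree two in $r$, every $cr$ ($c\in\mathbb F$) is again a skew-symmetric solution of the $Q$-equation; hence by Proposition 7.10, together with the isomorphism $\mathcal{QD}(A)\cong A\ltimes A^*$ recorded there, each $G_c:=\{(c\,T_r(a^*),a^*)\mid a^*\in A^*\}$ is a sub-quadri-algebra of $\mathcal{QD}(A)$ (with $G_0=A^*$). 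The members $\{A\}\cup\{G_c\mid c\in\mathbb F\}$ all have dimension $\dim A$ and are pairwise complementary (trivial intersection by injectivity of $T_r$, and a dimension count for the sum). Now write an endomorphism of $\mathcal{QD}(A)=A\oplus A^*$ in blocks $A\to A$, $A^*\to A$, $A\to A^*$, $A^*\to A^*$ and restrict to those whose blocks are scalar multiples of ${\rm id}_A$, $T_r$, $T_r^{-1}$, ${\rm id}_{A^*}$ respectively: because $T_rT_r^{-1}={\rm id}_A$ and $T_r^{-1}T_r={\rm id}_{A^*}$ these form a subalgebra isomorphic to $M_2(\mathbb F)$, inside which $N=N_{\lambda_1,\lambda_2,\lambda_3,\lambda_4}$ is $\begin{pmatrix}\lambda_2&\lambda_1\\\lambda_3&\lambda_4\end{pmatrix}$, and a look at images and kernels shows that every idempotent of this $M_2(\mathbb F)$ is $0$, the identity, or the projection of $\mathcal{QD}(A)$ onto one member of the pencil along another.

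Combining these: after base change to an algebraic closure $\overline{\mathbb F}$ (the quadri-bialgebra, $\mathcal{QD}(A)$, $T_r$ and $N$ all extend, and $\mathcal{T}_N$ vanishing over $\overline{\mathbb F}$ forces it over $\mathbb F$), on the Zariski-dense open set of parameters where $N$ has two distinct eigenvalues $\mu_1\ne\mu_2$ the spectral projections $\Pi_i=(N-\mu_j\,{\rm id})/(\mu_i-\mu_j)$ lie in $M_2(\overline{\mathbb F})$, hence project onto complementary members of the pencil, so $N=\mu_1\Pi_1+\mu_2\Pi_2$ is Nijenhuis by the splitting lemma. Each component of $\mathcal{T}_{N_{\lambda_1,\ldots,\lambda_4}}$ is a polynomial in $(\lambda_1,\ldots,\lambda_4)$ vanishing on a dense set, hence identically, which gives the proposition. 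The delicate part — the main obstacle — is the remaining closed locus, where $N-\mu\,{\rm id}$ is a rank-one square-zero operator: one handles it either by a further perturbation/density argument within $M_2$, or, in the spirit of the paper's explicit proofs, by subtracting the scalar and rescaling to reach a single representative such as $(x,a^*)\mapsto(T_r(a^*),0)$, whose Nijenhuis identity unwinds — via the products of Proposition 7.7 — to exactly the $\mathcal{O}$-operator relation $T_r(a^*)\circ T_r(b^*)=T_r\bigl(l_\circ(T_r(a^*))b^*+r_\circ(T_r(b^*))a^*\bigr)$ of Proposition 7.3. (One could alternatively bypass the structural picture entirely and verify $\mathcal{T}_N=0$ directly on the four sectors $A\times A$, $A\times A^*$, $A^*\times A$, $A^*\times A^*$ using Propositions 7.7 and 7.3; the cancellations are forced by the same intertwining property of $T_r$, but the computation is substantially longer.)
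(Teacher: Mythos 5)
Your argument is correct, but it is a genuinely different route from the paper's: the paper disposes of Proposition 8.3 with ``It is straightforward,'' i.e.\ a direct verification of Eq.~(8.2) for the four operations of $\mathcal{QD}(A)$ using the explicit products of Proposition 7.7, whereas you give a structural proof. Your splitting lemma (any $aP_1+bP_2$ for projections onto complementary sub-quadri-algebras has vanishing torsion) is sound, the invariance of the torsion under $N\mapsto cN$ and $N\mapsto N+c\,\mathrm{id}$ is correct, and the pencil claim checks out: $cr$ solves the $Q$-equation since each $Q_i^j$ is quadratic in $r$, so by Proposition 7.10 the graph of $cT_r$ is a sub-quadri-algebra of $A\ltimes A^*$, and pulling back along $\theta$ (note $\theta^{-1}$ sends the graph of $cT_r$ to the graph of $(c-1)T_r$, a harmless shift since $c$ is arbitrary) shows every $G_c$, together with $A$, is a sub-quadri-algebra of $\mathcal{QD}(A)$. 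The identification of your four-parameter family with $M_2(\mathbb F)$ acting as $M\otimes\mathrm{id}_A$ under $(x,a^*)\mapsto(x,T_r(a^*))$ is where nondegeneracy of $r$ enters, and the generic diagonalization plus the observation that each component of the torsion is polynomial in $(\lambda_1,\dots,\lambda_4)$, vanishing on the dense locus of distinct eigenvalues over $\overline{\mathbb F}$, already yields the proposition in full. What your approach buys is an explanation of \emph{why} the operator is Nijenhuis (it is a generic member of a pencil of spectral decompositions into complementary sub-quadri-algebras, with the $\mathcal O$-operator property of $T_r$ hiding inside the subalgebra property of the graphs), at the cost of base change and a density argument; the paper's intended computation is longer but elementary and characteristic-free without any genericity step.

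Two small caveats. First, your closing discussion is internally redundant: once the polynomial identity in the $\lambda_i$ is invoked there is no ``remaining closed locus'' left to treat, so the last sentences are an (unneeded) alternative rather than a missing step. Second, within that alternative, the claim that subtracting the scalar and rescaling reaches the single representative $(x,a^*)\mapsto(T_r(a^*),0)$ is not accurate: the rank-one square-zero elements of your $M_2$ form a projective conic, not a single orbit under scaling and adding scalars (e.g.\ $(x,a^*)\mapsto(0,T_r^{-1}(x))$ is not reachable), so if you wanted to avoid density you would have to treat that whole conic, not one point. As it stands this does not affect the validity of your main argument.
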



In the following we assume that the coefficient field $\mathbb F$
is the real number field $\mathbb R$ and fix a $\lambda\in\mathbb
R$. With the conditions in Proposition~\mref{pp:8.3}, if we
consider a Nijenhuis operator satisfying
$N_{\lambda_1,\lambda_2,\lambda_3,\lambda_4}^2=\lambda^2{\rm id}$
and apply Proposition~\mref{pp:8.2}, then we can get three
families of Rota-Baxter operators of weight $\lambda$ on
$\mathcal{QD}(A)$:
\begin{eqnarray*}
&{\rm (F1)}&
P_{\lambda,k}^{+}(x,a^*) :=\frac{-\lambda-N_{0,\lambda,k,-\lambda}}{2}(x,a^*)
=\left(
-\lambda x,-\frac{k}{2}T_r^{-1}(x)\right)\\ & & {\rm or\ }
P_{\lambda,k}^{-}(x,a^*)
:=\frac{-\lambda-N_{0,-\lambda,k,\lambda}}{2}(x,a^*)
=\left(
0,-\frac{k}{2}T_r^{-1}(x)-\lambda a^*\right), \quad  k\neq 0;\\
&{\rm (F2)}&
\hat{P}_{\lambda,\hat{k}}^{+}(x,a^*)
:=\frac{-\lambda-N_{\hat{k},\lambda,0,-
\lambda}}{2}(x,a^*)
=\left(-\frac{\hat{k}}{2}T_r(a^*)-\lambda
x,0\right)\\
& &{\rm or\ }
\hat{P}_{\lambda,\hat{k}}^{-}(x,a^*)
:=\frac{-\lambda-N_{\hat{k},-\lambda,0,
\lambda}}{2}(x,a^*)
=\left(-\frac{\hat{k}}{2}T_r(a^*),-\lambda a^*\right), \quad  (\hat{k},\lambda)\neq (0,0); \\
&{\rm (F3)}&
P_{\lambda,k_1,k_2}(x,a^*)
:=\frac{-\lambda-N_{k_1,k_2,{{\lambda^2-k_2^2}\over{k_1}},-k_2}}{2}(x,a^))\\
& &=
\left(-\frac{k_1}{2}T_r(a^*)-\frac{(k_2+\lambda)}{2}x,\frac{(k_2^2-\lambda^2)}{2k_1}T_r^{-1}(x)+\frac{(k_2
-\lambda)}{2}a^*\right), \quad  k_2\neq\pm \lambda,   k_1\neq 0,
\end{eqnarray*}
for all $x\in A, a^*\in A^*$, where $k,\hat{k},k_1,k_2\in\mathbb
R$. Here we exclude the trivial case when $P=-\lambda{\rm id}$.
Furthermore, it is easy to check that these Rota-Baxter operators
are idempotents (i.e., $P^2=P$) if and only if $\lambda=-1$, that
is, they are Rota-Baxter operators of weight $-1$. We record them
explicitly as follows (see the discussion at the end of this
section for their important roles in renormalization in quantum
field theory):
\begin{eqnarray*}
&{\rm (G1)}&
P_{-1,k}^{+}(x,a^*):=\frac{1-N_{0,-1,k,1}}{2}(x,a^*)
=\left(
x,-\frac{k}{2}T_r^{-1}(x)\right)\\ & & {\rm or\ }
P_{-1,k}^{-}((x,a^*)):=\frac{1-N_{0,1,k,-1}}{2}((x,a^*))=(
0,-\frac{k}{2}T_r^{-1}(x)+a^*), \quad  k\neq 0;\\
&{\rm (G2)}&
\hat{P}_{-1,\hat{k}}^{+}(x,a^*)
:=\frac{1-N_{\hat{k},-1,0,
1}}{2}(x,a^*)
=\left(-\frac{\hat{k}}{2}T_r(a^*)+x,0\right)\\
& &{\rm or\ }
\hat{P}_{-1,\hat{k}}^{-}(x,a^*):=\frac{1-N_{\hat{k},1,0,
-1}}{2}(x,a^*)=\left(-\frac{\hat{k}}{2}T_r(a^*),a^*\right),
\\
&{\rm (G3)}&
P_{-1,k_1,k_2}(x,a^*):=\frac{1-N_{k_1,k_2,{{1-k_2^2}\over{k_1}},-k_2}}{2}(x,a^*)\\
& &=
\left(-\frac{k_1}{2}T_r(a^*)-\frac{(k_2-1)}{2}x,\frac{(k_2^2-1)}{2k_1}T_r^{-1}(x)+\frac{(k_2
+1)}{2}a^*\right),  \quad  k_2\neq\pm 1,   k_1\neq 0,
\end{eqnarray*}
for all $x\in A, a^*\in A^*$, where $k,\hat{k},k_1,k_2\in\mathbb R$.

On the other hand, the requirement that $r$ is nondegenerate can be
dropped if the $\lambda_3$ appearing in the definition of
$N_{\lambda_1,\lambda_2,\lambda_3,\lambda_4}$ equals to zero. More
precisely, we have the following immediate conclusion.

\begin{prop}
Let $(A,\nwarrow,\nearrow,\swarrow,\searrow,\alpha,\beta,
\tilde{\alpha},\tilde{\beta})$ be a quadri-bialgebra, which is
induced from a skew-symmetric solution $r$ of \qeq. Then for all
$x\in A,a^*\in A^*$,
\begin{equation} N_{\lambda_1,\lambda_2,\lambda_3}((x,a^*)):=(\lambda_1T_r(a^*)+
\lambda_2x,\lambda_3a^*)\end{equation} is a Nijenhuis operator on
$\mathcal{QD}(A)$, where $\lambda_i\in\mathbb F$, $i=1,2,3$.
\mlabel{pp:8.4}
\end{prop}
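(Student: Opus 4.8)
The plan is to verify, for $N:=N_{\lambda_1,\lambda_2,\lambda_3}$ and for each of the four products $\diamond\in\{\nwarrow_{\bullet},\nearrow_{\bullet},\swarrow_{\bullet},\searrow_{\bullet}\}$ of $\mathcal{QD}(A)=A\oplus A^{*}$, the Nijenhuis identity $N(u)\diamond N(v)=N\big(N(u)\diamond v+u\diamond N(v)-N(u\diamond v)\big)$ for all $u,v\in\mathcal{QD}(A)$. Since every term is bilinear in $(u,v)$, it suffices to treat the four cases in which $u$ and $v$ each lie in $A$ or in $A^{*}$. The ``straightforward'' way is then to substitute the multiplication table of $\mathcal{QD}(A)$ (Theorem 6.7, or in $T_r$-form Proposition 7.8), use that $A$ and $A^{*}$ are sub-quadri-algebras and that $T_r$ is an $\mathcal{O}$-operator associated to the bimodule $(A^{*},R_{\searrow}^{*},L_{\star}^{*},-R_{\vee}^{*},-L_{\prec}^{*},-R_{\succ}^{*},-L_{\wedge}^{*},R_{\star}^{*},L_{\nwarrow}^{*})$ (Proposition 7.3), together with the bimodule axioms (4.1)--(4.9), and grind through each case; the only cost is bookkeeping across four operations and four cases.

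I would instead organize it conceptually, starting from the elementary observation: if a quadri-algebra $Q$ decomposes as $Q=B\oplus C$ (as vector spaces) with $B$ and $C$ sub-quadri-algebras, then for any scalars $\mu,\nu$ the linear map acting as $\mu\,\mathrm{id}$ on $B$ and $\nu\,\mathrm{id}$ on $C$ is a Nijenhuis operator on $Q$ for each of the four products. Indeed, writing $u=b_1+c_1$, $v=b_2+c_2$ with $b_i\in B$, $c_i\in C$ and using $b_1\diamond b_2\in B$, $c_1\diamond c_2\in C$, both sides of the Nijenhuis identity reduce to $\mu^{2}(b_1\diamond b_2)+\mu\nu(b_1\diamond c_2+c_1\diamond b_2)+\nu^{2}(c_1\diamond c_2)$.

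It remains to realize $N_{\lambda_1,\lambda_2,\lambda_3}$ as such an operator. By Theorem 6.7, $A$ is a sub-quadri-algebra of $\mathcal{QD}(A)$; and since the $Q$-equation (Eqs.\ (6.14)--(6.15)) is homogeneous of degree two in $r$, $cr$ is again a skew-symmetric solution of the $Q$-equation for every $c\in\mathbb F$, so by Proposition 7.11 (equivalently, checked directly from Proposition 7.8 via the $\mathcal{O}$-operator property of $cT_r$) the subspace $\mathrm{graph}(cT_r):=\{(cT_r(a^{*}),a^{*})\mid a^{*}\in A^{*}\}$ is a sub-quadri-algebra of $\mathcal{QD}(A)$, with $\mathcal{QD}(A)=A\oplus\mathrm{graph}(cT_r)$. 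When $\lambda_2\neq\lambda_3$, put $c=\lambda_1/(\lambda_3-\lambda_2)$; a one-line check gives that $N_{\lambda_1,\lambda_2,\lambda_3}$ acts as $\lambda_2\,\mathrm{id}$ on $A$ and as $\lambda_3\,\mathrm{id}$ on $\mathrm{graph}(cT_r)$, so the observation above (with $B=A$, $C=\mathrm{graph}(cT_r)$, $\mu=\lambda_2$, $\nu=\lambda_3$) finishes this case. For $\lambda_2=\lambda_3$ the operator is generally not diagonalizable, so the observation does not apply; there I would either verify directly that $N-\lambda_2\,\mathrm{id}$ --- which equals $\lambda_1$ times the square-zero operator $(x,a^{*})\mapsto(T_r(a^{*}),0)$ --- is Nijenhuis (adding a scalar multiple of $\mathrm{id}$ preserves the Nijenhuis property, and the square-zero operator is handled by the $\mathcal{O}$-operator identity for $T_r$), or simply note that, evaluated on a basis, the whole statement is a polynomial identity in $(\lambda_1,\lambda_2,\lambda_3)$ over $\mathbb F$ holding on $\{\lambda_2\neq\lambda_3\}$, hence holding identically.

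The main obstacle is the middle step: confirming that $\mathrm{graph}(cT_r)$ really is closed under all four products of $\mathcal{QD}(A)$ --- this is precisely where the hypothesis that $r$ solves the $Q$-equation, via the $\mathcal{O}$-operator property of $T_r$ and the matched-pair structure of $\mathcal{QD}(A)$, is genuinely used --- together with disposing of the non-diagonalizable case $\lambda_2=\lambda_3$, $\lambda_1\neq0$. Everything beyond those two points is formal.
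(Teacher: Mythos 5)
Your argument is correct, but it takes a genuinely different route from the paper, whose proof of this proposition is simply the direct check: substitute the multiplication table of $\mathcal{QD}(A)$ (Theorem 6.7 / Proposition 7.8) into the Nijenhuis identity for each of the four operations and each of the four cases $u,v\in A$ or $A^*$, using the $\mathcal{O}$-operator property of $T_r$ --- exactly the ``straightforward'' computation you describe and then bypass. What your route buys is conceptual clarity and generality: the elementary lemma that any operator acting as a scalar on each factor of a vector-space decomposition into sub-quadri-algebras is Nijenhuis is valid for any algebra with any family of bilinear products, and it isolates precisely where the $Q$-equation enters (closedness of the graph, via the $\mathcal{O}$-operator identity); the paper's computation, by contrast, requires no structural preparation but gives no such insight and does not explain why the formula works. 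Two small points to tighten in your write-up. First, Proposition 7.11 produces Lagrangian sub-quadri-algebras of the semidirect product $A\ltimes A^*$, not of $\mathcal{QD}(A)$; to invoke it you must transport through the isomorphism $\theta(x,a^*)=(T_r(a^*)+x,a^*)$ of Proposition 7.11(1), under which $\mathrm{graph}(cT_r)\subset\mathcal{QD}(A)$ corresponds to $\mathrm{graph}((c+1)T_r)\subset A\ltimes A^*$, and $(c+1)r$ is again a skew-symmetric solution by homogeneity --- or, as you also propose, verify closedness directly from Proposition 7.8, which indeed reduces to the $\mathcal{O}$-operator identity for $T_r$ and works. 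Second, the polynomial-identity fallback for $\lambda_2=\lambda_3$ as stated needs an infinite field (over a finite $\mathbb F$, vanishing on $\{\lambda_2\neq\lambda_3\}$ does not force identical vanishing, unless you first extend scalars); your primary alternative --- that $N-\lambda_2\,\mathrm{id}=\lambda_1 S$ with $S(x,a^*)=(T_r(a^*),0)$, that scalar multiples and shifts by $\mathrm{id}$ preserve the Nijenhuis property, and that $S$ is Nijenhuis because $S$ factors through the $A^*$-component and the $\mathcal{O}$-operator identity gives $S(u)\diamond S(v)=S(S(u)\diamond v+u\diamond S(v))$ --- is the cleaner way to close that case and works over any field.
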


\begin{remark}{\rm   Similarly, we consider the case when
$N_{\lambda_1,\lambda_2,\lambda_3}=\lambda^2{\rm id}$ for
$\lambda\in\mathbb R$ and then apply Proposition~\mref{pp:8.2} to get certain
families of Rota-Baxter operators (of weight $\lambda$) on
$\mathcal{QD}(A)$. In fact, one can show that the Rota-Baxter
operators (of weight $\lambda$) are given by {\rm (F2)} and the
Rota-Baxter operators which are idempotents are given by {\rm (G2)},
where $x\in A,a^*\in A^*$.}
\mlabel{rk:8.5}
\end{remark}

\begin{prop}  Let $(A,\nwarrow,\nearrow,\swarrow,\searrow)$ be a
quadri-algebra. Then
\begin{equation}\mlabel{eq:8.5}N_{\lambda_1,\lambda_2,\lambda_3,\lambda_4}((x,y)):=(\lambda_1y+\lambda_2x,
\lambda_3x+\lambda_4y),\;\;\forall x,y\in A\end{equation} is a
Nijenhuis operator on $A_v\ltimes_{L_{\nearrow},R_{\nwarrow},
L_{\searrow},R_{\swarrow}}A$,
 where $x,y\in A$ and $\lambda_i\in\mathbb F$, $i=1,2,3,4$.
 Moreover, if $(A_v,A_v,L_{\nearrow},R_{\nwarrow},
L_{\searrow},R_{\swarrow},L_{\nearrow},R_{\nwarrow},
L_{\searrow},R_{\swarrow})$  is a matched pair of dendriform
algebras, then the linear operator defined by Eq.~(\mref{eq:8.5})
is also a Nijenhuis operator on
$A_v\bowtie_{L_{\nearrow},R_{\nwarrow},
L_{\searrow},R_{\swarrow}}^{L_{\nearrow},R_{\nwarrow},
L_{\searrow},R_{\swarrow}}A_v$.  On the other hand,
\begin{equation}
\theta((x,y)):=(y+x,x),\;\;\forall x,y\in A\end{equation} is an
isomorphism of dendriform algebras from
$A_v\bowtie_{L_{\nearrow},R_{\nwarrow},
L_{\searrow},R_{\swarrow}}^{L_{\nearrow},R_{\nwarrow},
L_{\searrow},R_{\swarrow}}A_v$  to
$A_v\ltimes_{L_{\nearrow},R_{\nwarrow},
L_{\searrow},R_{\swarrow}}A$. \mlabel{pp:8.6}
\end{prop}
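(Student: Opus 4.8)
The plan is to treat the three assertions separately, recording only one representative verification in each of the genuinely computational steps. Fix notation by writing elements of $A_v\ltimes_{L_\nearrow,R_\nwarrow,L_\searrow,R_\swarrow}A$ as pairs $(x,y)$ --- the first copy being the dialgebra $A_v$, the second the bimodule $(A,L_\nearrow,R_\nwarrow,L_\searrow,R_\swarrow)$ of $A_v$ from Proposition~2.8(3) --- so that
\[
(x_1,y_1)\wedge(x_2,y_2)=(x_1\wedge x_2,\ x_1\nearrow y_2+y_1\nwarrow x_2),\qquad (x_1,y_1)\vee(x_2,y_2)=(x_1\vee x_2,\ x_1\searrow y_2+y_1\swarrow x_2),
\]
and, under the matched-pair hypothesis, writing elements of $A_v\bowtie A_v$ as $(x,a)$ so that Eqs.~(3.21)--(3.22) give
\[
(x_1,a_1)\wedge(x_2,a_2)=(x_1\wedge x_2+a_1\nearrow x_2+x_1\nwarrow a_2,\ a_1\wedge a_2+x_1\nearrow a_2+a_1\nwarrow x_2),
\]
with the analogous formula for $\vee$ obtained by replacing $\nwarrow,\nearrow$ by $\swarrow,\searrow$ throughout. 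For the last assertion I would simply substitute $\theta(x,a)=(a+x,x)$: comparing $\theta\bigl((x_1,a_1)\wedge(x_2,a_2)\bigr)$ with $\theta(x_1,a_1)\wedge\theta(x_2,a_2)$ coordinate-by-coordinate, the difference collapses to a sum of terms $u\wedge v-u\nwarrow v-u\nearrow v$ with $u,v\in\{x_1,x_2,a_1,a_2\}$, each of which vanishes by the quadri-algebra identity $u\wedge v=u\nwarrow v+u\nearrow v$; the $\vee$-case is identical using $u\vee v=u\swarrow v+u\searrow v$. As $\theta$ is bijective with inverse $(p,q)\mapsto(q,p-q)$, it is an isomorphism of dendriform dialgebras, once $A_v\bowtie A_v$ is a dialgebra at all, i.e.\ under the matched-pair hypothesis.

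For the first assertion, let $\iota_1,\iota_2\colon A\to A_v\ltimes A$ be the two inclusions, so $N:=N_{\lambda_1,\lambda_2,\lambda_3,\lambda_4}$ satisfies $N\iota_1=\lambda_2\iota_1+\lambda_3\iota_2$ and $N\iota_2=\lambda_1\iota_1+\lambda_4\iota_2$. The semidirect-sum structure supplies the three facts that make the verification go: $\iota_1(A)$ is a sub-dialgebra (isomorphic to $A_v$); $\iota_2(A)$ is an ideal with $\iota_2(A)\diamond\iota_2(A)=0$ for $\diamond\in\{\wedge,\vee\}$; and $\iota_1(x)\diamond\iota_2(y)$ and $\iota_2(y)\diamond\iota_1(x)$ both lie in $\iota_2(A)$. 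Since the Nijenhuis identity $N(u)\diamond N(v)=N\bigl(N(u)\diamond v+u\diamond N(v)-N(u\diamond v)\bigr)$ is bilinear in $(u,v)$, it is enough to check it for $(u,v)$ running over the four pairs $(\iota_1x,\iota_1x')$, $(\iota_1x,\iota_2y)$, $(\iota_2y,\iota_1x)$, $(\iota_2y,\iota_2y')$ and for $\diamond=\wedge,\vee$. In every one of these eight cases, both sides expand into an $\iota_1$-part and an $\iota_2$-part; after collecting terms the $\iota_1$-part becomes the obviously matching expression once one uses $x\nwarrow y+x\nearrow y=x\wedge y$ (resp.\ $x\swarrow y+x\searrow y=x\vee y$), and every surviving $\iota_2$-term either pairs off by the same identity or dies because $\iota_2(A)$ squares to zero. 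I would write out the case $\diamond=\wedge$, $u=\iota_1(x)$, $v=\iota_2(y)$ in full (where the identity is used once inside the bracket and once more after applying the outer $N$) and note the other seven are identical in pattern.

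For the second assertion I would transport along $\theta$ rather than rerun the case analysis. Regarded as an endomorphism of $A\oplus A$, $N_{\lambda_1,\lambda_2,\lambda_3,\lambda_4}$ is $M\otimes\mathrm{id}_A$ with $M=\begin{pmatrix}\lambda_2&\lambda_1\\ \lambda_3&\lambda_4\end{pmatrix}$, so as $(\lambda_1,\lambda_2,\lambda_3,\lambda_4)$ varies over $\mathbb F^4$ these operators are precisely the $M\otimes\mathrm{id}_A$ for arbitrary $2\times2$ matrices $M$; likewise $\theta$ is $\Theta\otimes\mathrm{id}_A$ with the invertible matrix $\Theta=\begin{pmatrix}1&1\\1&0\end{pmatrix}$. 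Hence $\theta N_{\lambda}\theta^{-1}=(\Theta M\Theta^{-1})\otimes\mathrm{id}_A$ is again of the form $N_{\mu}$, i.e.\ the family is closed under conjugation by $\theta$. Since a Nijenhuis operator on the target of an isomorphism of dendriform dialgebras pulls back to a Nijenhuis operator on the source (an immediate check of the Nijenhuis identity through the isomorphism), given $\mu$ one chooses $\lambda$ with $N_{\lambda}=\theta N_{\mu}\theta^{-1}$; the first assertion makes $N_{\lambda}$ Nijenhuis on $A_v\ltimes A$, and transporting back through $\theta$ shows $N_{\mu}=\theta^{-1}N_{\lambda}\theta$ is Nijenhuis on $A_v\bowtie A_v$. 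The one place requiring care is the bookkeeping of the eight cases in the first assertion, but nothing deep is at stake there: the quadri-algebra relations $\nwarrow+\nearrow=\wedge$ and $\swarrow+\searrow=\vee$ force every cancellation, which is exactly why the same family of operators works verbatim on both double spaces.
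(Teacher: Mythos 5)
Your proposal is correct. The paper disposes of this proposition with ``It is straightforward,'' i.e.\ an (omitted) direct verification, so the only meaningful comparison is organizational. Your setup matches the paper's structures: the semidirect-sum operations you write down are the ones coming from the bimodule $(A,L_{\nearrow},R_{\nwarrow},L_{\searrow},R_{\swarrow})$ of $A_v$ (Proposition 2.8(3)), and your $\bowtie$-formulas agree with Eqs.\ (3.21)--(3.22) specialized to $l_{\wedge_A}=l_{\wedge_B}=L_{\nearrow}$, $r_{\wedge_A}=r_{\wedge_B}=R_{\nwarrow}$, $l_{\vee_A}=l_{\vee_B}=L_{\searrow}$, $r_{\vee_A}=r_{\vee_B}=R_{\swarrow}$. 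The cancellations you invoke do occur exactly as described: in your representative Nijenhuis case $u=\iota_1(x)$, $v=\iota_2(y)$, $\diamond=\wedge$, the bracket $N(u)\wedge v+u\wedge N(v)-N(u\wedge v)$ collapses to $(\lambda_1\,x\nwarrow y,\ \lambda_2\,x\nearrow y)$ and applying the outer $N$ reassembles $(\lambda_1\lambda_2\,x\wedge y,\ \lambda_1\lambda_3\,x\nwarrow y+\lambda_2\lambda_4\,x\nearrow y)=N(u)\wedge N(v)$, and the remaining cases run on the same identities $\nwarrow+\nearrow=\wedge$, $\swarrow+\searrow=\vee$; likewise the $\theta$-check for the last assertion reduces to these identities. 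Where you genuinely depart from the implicit route of verifying both double spaces separately is the second assertion: you prove the isomorphism claim first (a direct check that does not use the matched-pair hypothesis for the computation itself), note that the family $N_{\lambda_1,\lambda_2,\lambda_3,\lambda_4}=M\otimes\mathrm{id}_A$ is closed under conjugation by $\theta=\Theta\otimes\mathrm{id}_A$ with $\Theta$ invertible, and transport the Nijenhuis property through $\theta$, correctly observing that the matched-pair hypothesis is what makes $A_v\bowtie A_v$ a dendriform dialgebra in the first place and that no circularity arises. This buys a real economy -- one eight-case verification instead of two -- at the cost of a trivial linear-algebra remark about conjugating $2\times2$ matrices, and it arguably explains why the authors state the isomorphism $\theta$ in the proposition at all.
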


\begin{proof} It is straightforward.\end{proof}

\begin{remark}{\rm  Similarly, we can also consider the case that
$N_{\lambda_1,\lambda_2,\lambda_3,\lambda_4}=\lambda^2{\rm id}$
and then apply Proposition~\ref{pp:8.2} to get certain families of
Rota-Baxter operators of weight $\lambda$ on the double spaces
given in Proposition~\mref{pp:8.6}.} \mlabel{rk:8.7}
\end{remark}

\begin{remark}{\rm  One can use these Nijenhuis operators and
Rota-Baxter operators (of weight $\lambda$) to construct
$NS$-algebra and tridendriform algebra structures (see \cite{EG1}
and the references therein). Moreover, it is easy to show that
these Nijenhuis operators also satisfy Eq.~(\meqref{eq:8.2}) on
their associated associative algebras, that is, they are the
so-called {\bf associative Nijenhuis tensors} of the associated
associative algebras in the sense of \cite{CGM}, where this notion
was introduced in the study of Wigner problem in quantum physics.}
\mlabel{rk:8.8}
\end{remark}

\begin{remark}{\rm Obviously, if $P$ is a Rota-Baxter operator of weight
$\lambda$ on a quadri-algebra, then it is a Rota-Baxter operator
of weight $\lambda$ on the associated associative algebra.
Furthermore, Rota-Baxter operators on associative algebras which
are idempotents play a key role in the {\bf algebraic Birkhoff
decomposition} in pQFT~\cite{CK,EGK1,EGK2}. See~\cite{EG2} also
for a nice survey of this topic. As discussed above, we have
constructed certain families of Rota-Baxter operators on the
double spaces of quadri-algebras.} \mlabel{rk:8.9}
\end{remark}

\bigskip

\noindent {\bf Acknowledgements.} The authors are grateful to
Professor J.-L. Loday for his inspiring suggestions and to L. Guo
for helpful discussions. This work is supported by NSFC
(11931009).  C. Bai is also supported by the Fundamental Research
Funds for the Central Universities and Nankai ZhiDe Foundation.

\end{document}